\documentclass[nonblindrev]{informs3}
\RequirePackage{tgtermes}
\RequirePackage{newtxtext}
\RequirePackage{bm}
\RequirePackage{endnotes}

\OneAndAHalfSpacedXI

\makeatletter
\let\normalsize\relax
\let\@currsize\normalsize
\makeatother

\usepackage{tikz}
\usepackage{longtable}
\usepackage{tabularx}
\usepackage{booktabs}
\usepackage{etex}
\usepackage{multirow}
\usepackage{bbold}
\usepackage{amsmath}
\usepackage{mathtools}
\usepackage{subfig}
\usepackage{array}
\usepackage{hyperref}
\usepackage{cleveref}
\usepackage{natbib}
\usepackage{courier}

\usepackage{xcolor}
\definecolor{myblue}{RGB}{35, 76, 143} 

\hypersetup{
    colorlinks=true,
    linkcolor=myblue, 
    citecolor=myblue, 
    filecolor=magenta,     
    urlcolor=black          
}

\usepackage{algorithm}
\usepackage{algorithmicx}
\usepackage{capt-of}
\usepackage[noend]{algpseudocode}
\usepackage{amsbsy, amsfonts, amsgen, amsmath, amsopn, amssymb, amstext, amsxtra, bezier, color, enumerate, graphicx, latexsym, verbatim, pictexwd, supertabular, url, dsfont,leftidx, mathrsfs, appendix,setspace}
\usepackage{pbox}
\usepackage{caption}
\usepackage{tablefootnote}
\usepackage{adjustbox}
\usepackage{threeparttable}

\usepackage{setspace}

\newtheorem{assu}{Assumption}[section]

\usepackage[normalem]{ulem}
\newcommand{\ins}[1]{\textcolor{black}{{#1}}} 
\newcommand{\delete}[1]{\textcolor{red}{\sout{#1}}} 
\newcommand{\cmtY}[1]{\textcolor{red}{(Yixing: {#1})}} 
    \newcommand{\cmtA}[1]{\textcolor{purple}{#1}} 

\usepackage{endnotes}
\let\footnote=\endnote

\algblockdefx{For}{EndFor}[1]{\algorithmicfor\ #1}{\algorithmicend\ \algorithmicfor}
\algblockdefx{While}{EndWhile}[1]{\algorithmicwhile\ #1}{\algorithmicend\ \algorithmicwhile}

\usepackage{natbib}
 \bibpunct[, ]{(}{)}{,}{a}{}{,}%
 \def\bibfont{\small}%
\EquationsNumberedThrough    

\TheoremsNumberedThrough     
\ECRepeatTheorems  %

\MANUSCRIPTNO{IJOC-0001-2024.00}

\begin{document}


\RUNAUTHOR{Hua et al.}

\RUNTITLE{Geometrically Convergent Spatial Hypercube Queue}

\TITLE{A Geometrically Convergent Solution to Spatial Hypercube Queueing Models}

\ARTICLEAUTHORS{%
\AUTHOR{Cheng Hua$^\ast$, Jun Luo$^{\ast\ddagger}$, Arthur J. Swersey$^\dagger$, Yixing Wen$^\ast$}
\AFF{$^\ast$Antai College of Economics and Management, Shanghai Jiao Tong University, Shanghai 200030, 
\EMAIL{\{cheng.hua, jluo\_ms, star-winmex\}@sjtu.edu.cn}}
\AFF{$^\ddagger$Data-Driven Management Decision Making Lab, Shanghai Jiao Tong University, Shanghai 200030}
\AFF{$^\dagger$Yale School of Management, Yale University, New Haven, CT 06511, \EMAIL{arthur.swersey@yale.edu}}

} 

\ABSTRACT{%
The hypercube queueing model was initially developed to address spatial queueing problems and has found wide applications in emergency services, such as ambulance and police systems. While the model was originally designed for homogeneous service rates, we extend it to handle heterogeneous service rates by devising an exact solution through a birth-death process and an equivalent reformulation. We demonstrate that our algorithm converges to the exact solution at a geometric rate. Additionally, we developed a parallel algorithm that leverages the convergence property and two structural features of the hypercube model, achieving more than \ins{91\%} parallelization. \ins{Numerical experiments on emergency medical service systems show that our sequential algorithm is over 1,000 times faster than the sparse solver and more than 500 times faster than discrete-event simulation, while maintaining high accuracy. The parallel algorithm further improves efficiency, achieving an approximately eightfold speedup with 12 processing units, with additional gains possible when more computational resources are available. Overall, the proposed algorithms improve computational efficiency and enable the solution of large-scale problems that are otherwise intractable using traditional approaches.} 
}%




\KEYWORDS{spatial queue, hypercube queueing model, emergency service system, parallel computing, birth-death process} 

\maketitle


\section{Introduction} \label{sec_intro}
Spatial characteristics play a crucial role in emergency service systems, particularly in the deployment of service units and the formulation of resource dispatching policies \citep{maxwell2010approximate, jenkins2021approximate}. 
The spatial hypercube queueing model, introduced by \cite{larson74hypercube} about 50 years ago, was developed to study the spatial distribution of emergency service units. Since then, it has been widely applied to analyze emergency service systems and has inspired numerous extensions. 
Our paper improves the performance of exact solution methods for the hypercube model. In addition, we extend the hypercube to allow for service times that vary by server and propose a solution that guarantees geometric convergence. 

The spatial hypercube model and its extensions have been used by many cities to study the deployment of police cars, ambulances, and other emergency services. For instance, \cite{mcewen1974patrol} used the hypercube to deploy police patrol cars in Rotterdam, Netherlands. \cite{chelst1975quantitative} applied the hypercube to the deployment of police cars in New Haven, CT. \cite{berman1982median} developed approximations for the stochastic $p$-median problem for emergency service systems based on the hypercube model, while \cite{mendoncca2001analysing}, \cite{iannoni2007multiple}, and \cite{iannoni2009optimization} applied the hypercube to locate emergency medical response units on a Brazilian highway. More recently, \cite{hua2022cross} developed both exact and approximate methods to analyze a three-state system where cross-trained personnel respond to both emergency medical calls and fire incidents. Additionally, \cite{beojone2021efficient} proposed an exact model to study the dispatching policy of a system with dedicated servers,  while \cite{zhu2022data} presented a data-driven police patrol zone design framework that combines the approximate method with mixed-integer linear programming to balance the workload of police cars at the Atlanta Police Department. \cite{iannoni2023review} provided a comprehensive review of the application of hypercube models to various real-world problems. Interested readers may refer to \cite{ghobadi2019hypercube} for a comprehensive review of these models, and to \cite{larson2013hypercube} for a brief introduction to the history, development, and applications of the hypercube model.

The spatial hypercube model \citep{larson74hypercube} is distinguished by its ability to model \textit{server-to-customer} systems, where servers travel to customer locations to fulfill service requests, as described in the aforementioned applications. This model accounts for the spatially distributed locations of servers and the randomness associated with their availability due to incoming calls. In contrast, \textit{customer-to-server} systems involve customers traveling to server locations. A key characteristic of server-to-customer systems is that servers are spread out, making it difficult to model the system in an aggregate manner and resulting in a larger state space.

The original hypercube model provides a solution by solving $2^N$ simultaneous equations arising from an $N$-unit system. The system is described by a vector of $N$ elements, where each element is either 0 (free) or 1 (busy), representing the status of each unit. Solving the steady-state balance equations yields the probability distribution of system states, which can be used to derive key performance measures, such as unit workload and region-wide response times \citep{alanis2013markov, rastpour2020modeling} to calls for service in emergency service systems.
The state-of-the-art solution method for solving the homogeneous service rate problem is the alternating hyperplane approach developed by \cite{larson74hypercube}, with the user’s manual provided in \cite{larson1975hypercube}. In this method, each hyperplane is defined as the set of states with the same number of busy servers. The approach alternates between updating probabilities for even- and odd-numbered hyperplanes, using the most recent estimates for each equation. We refer to this method throughout the paper as the original Larson method. 

\ins{To the best of our knowledge, explicit convergence rate guarantees and exact solution methods for systems with heterogeneous service rates are limited in the existing literature. Motivated by this, we develop an exact approach and incorporate parallel computing techniques to better accommodate larger problem instances.} \ins{Throughout the paper, we use the sparse solver \texttt{spsolve} as a benchmark, which provides exact solutions but is computationally demanding.}



Approximation methods have been developed to solve spatial hypercube queueing models. \cite{larson75approx} developed an approximation method to deal with large systems, in which the number of equations is reduced from $2^N$ to $N$. \cite{chelst81multiple} extended the model to allow for the dispatch of multiple units. \cite{jarvis85approximating} extended Larson's approximation method to include multiple customer types and general service time distributions. \cite{budge09technical} extended the approximation method to allow for multiple units at each location, workload variation by station, and call- and station-dependent service times. These approximation techniques incorporate correction factors to adjust for the lack of independence among servers, which are analyzed using the Erlang loss function. A major issue with these approximation methods is their limited convergence and performance guarantees. To assess the accuracy of these approximation techniques, exact solutions are needed. 


    A core contribution of this paper is the development of a new approach for solving the spatial hypercube queueing model with heterogeneous service rates, and proving that it converges to the exact solution at a geometric rate. The proof relies on a technique that bounds the differences between consecutive iterations. The result also extends to homogeneous cases. To the best of our knowledge, this is the first study to establish convergence and characterize its rate for both heterogeneous and homogeneous settings.
    
    We also develop a parallel algorithm based on the convergence result and two key properties: \textit{non-dependency within layer} and \textit{local-dependency between layers}, which we discuss in detail in Section~\ref{sec_parallel}. We propose a new coefficient generation algorithm that is faster and compatible with the parallel algorithm. This parallel algorithm substantially reduces computation time, allowing us to solve problems with more than 25 emergency units, which were previously computationally prohibitive for exact methods at this scale. We demonstrate that our algorithm achieves 91\% parallelization, according to Amdahl's law \citep{amdahl1967validity}.
    
    We conduct numerical experiments using data from two emergency medical service systems: St. Paul, Minnesota, and  Greenville County, South Carolina. Our \ins{sequential} algorithm reduces computation time by more than a thousand times compared to the sparse solver \ins{for problems with 15 or more units}.  \ins{The sparse solver requires over 150 minutes to solve the 15-unit case.} \ins{Moreover, our algorithm outperforms discrete-event simulation in both accuracy and efficiency, running over 500 times faster while achieving higher accuracy. The parallel algorithm further accelerates computation by more than eightfold for problems with more than 20 units using 12 processing units, and this gain would increase further with additional computing resources.} 
    We have open-sourced both our algorithm and a reimplementation of Larson's alternating hyperplane method.


The remainder of this paper is organized as follows. Section \ref{sec_model} outlines the spatial queueing problem and the hypercube model formulation. 
In Section \ref{sec_convergence}, we develop an algorithm and prove its geometric convergence to the exact solution. Section \ref{sec_parallel} details the parallel computing framework for our approach.  Numerical results are presented in Section \ref{sec_numerical}. Finally, Section \ref{sec_conclusion} concludes the paper. Proofs and additional numerical experiments are provided in the E-Companion.

\section{The Spatial Hypercube Queueing Model}
\label{sec_model}
 In this section, we begin by introducing the spatial hypercube queueing model in Section \ref{ssec_mode_basic}, and then present our reformulated model based on a birth-death process and demonstrate its equivalence to the original model in Section \ref{subsec:reformulations}.

\subsection{\ins{Model}} \label{ssec_mode_basic}
Consider an emergency service system operating in a geographical region partitioned into $J$ demand nodes.  There are $N$ emergency units, all of the same type, e.g., police or ambulance units. 
Region-wide calls for service arrive according to a Poisson process with rate $\lambda$, where $f_j$ is the fraction of calls at each node $j$. For each call at node $j$, a unit is dispatched, the one with the highest preference in a fixed preference list associated with that node. We denote $\zeta_{j}(h)$ as the $h$-th preferred unit for node $j$. If all units are busy, the call is queued, and we assume calls are served on a first-come, first-served basis. When the number of queued calls exceeds a buffer capacity $C$, the call is lost. The service time for each call is exponential with mean $1/\nu_i$ for unit $i$, a generalization of the original hypercube model, which assumes that the service rates are the same across all units.

For clarity of exposition, we first focus on the special case of $C=0$, as the core idea is included in this case. 
The more general case, where $C  > 0$, is discussed immediately afterward. We denote the state of the system as an $N$-dimensional vector $B_m=[b_{m,N},\ldots, b_{m,1}]$, which is written in reverse order, similar to the representation of binary numbers. Let $b_{m,i}$ represent the state of unit $i$ for state $B_m$, where $b_{m,i} = 0$ if the unit is available and $b_{m,i} = 1$ if the unit is busy.  We define $B_m(i)=b_{m,i}$ as the status of unit $i$ in state $B_m$. The state space is given by $\mathscr{B} = \{B_m: m=0,1\ldots,2^N-1\}$. Throughout the paper, we denote $\mathbb{1}\{\cdot\}$ as the indicator function. We summarize the notations in Table \ref{tab_notation}.

\begin{table}[tbp]
\renewcommand\arraystretch{1.2}
  \centering
  \caption{Summary of Symbols and Notations.}
  \begin{footnotesize}
    \begin{tabular}{c p{13cm}}
    \hline
        \multicolumn{2}{c}{Model}\\
        \hline
          $N$ & Total number of units.\\
          $J$ & Total number of demand nodes. \\
          $\zeta_{j}$ & Preference list of node $j$. $\zeta_{j} = \left[\zeta_{j}(1),  \zeta_{j}(2), \ldots, \zeta_{j}(N)\right]$ is a permutation of $\{1,2,\ldots,N\}$, where $\zeta_{j}(h)$ is the $h$-th preferred unit for calls at node $j$.\\
          $\eta_j(i)$ & The preference ranking of unit $i$ at node $j$.  $\eta_j(i)=h$ if and only if $\zeta_{j}(h) = i$.\\
          $\lambda$ & Overall arrival rate of calls. \\
          $f_j$ & Fraction of region-wide demand generated from demand node $j$. \\
          $\nu_i$ & Service rate of unit $i$.\\
          $C$ & Queue capacity.\\
          \hline
          \multicolumn{2}{c}{States and Transitions}\\
          \hline
          $B_m$ &  State $B_m=\left[b_{m,N}, \ldots, b_{m,1}\right]$, where $b_{m,i}$ is the status of unit $i$ in state $B_m$.\\
          $\mathscr{B}$ & State space of all states.\\
          $w(B_m)$ & Number of busy units in state $B_m$, $w(B_m) = \sum_i b_{m,i}$.\\
          $d_{ml}^+/d_{ml}^-$ & Upward/Downward Hamming distance from state $B_m$ to state $B_l$.\\
          $\mathscr{C}_n$ & Set of all states that have $n$ busy units, also referred to as layer $n$.\\
          $\mathscr{C}_n^l$ & Set of all states that have $n$ busy units and Hamming distance $1$ to state $l$.\\
          $\lambda_{ml}$ & Transition rate from $B_m$ to $B_l$ by an arrival, given by Equation \eqref{eq:tran_rates}.\\ 
          $\mu_{ml}$ & Transition rate from $B_m$ to $B_l$ by a service completion, given by Equation \eqref{eq:tran_rates}.\\
          $\lambda_{m}$ & Upward transition rate from state $B_m$, $\lambda_{m} = \sum_{l\in \mathscr{C}_{n+1}} \lambda_{ml}$.\\ 
          $\mu_m$ & Downward transition rate from state $B_m$, $\mu_{m} = \sum_{l\in \mathscr{C}_{n-1}} \mu_{ml}.$\\
          $\lambda(n)$ & Upward transition rate of the birth-death process for layer $n$, given by Equation \eqref{eq:lambda}.\\
          $\mu(n)$ & Downward transition rate of the birth-death process for layer $n$, given by Equation \eqref{eq:mu}.\\ 
          \hline
          \multicolumn{2}{c}{State Probabilities}\\
        \hline
        $P\{B_m\}$ & Steady-state probability that the system is in state $B_m$.\\
        $P\{n,B_m\}$ & Joint probability that the system is in state $B_m$ and $n$ units are busy. \\
        $p_{n}\left(B_m\right)$ & Conditional probability of being in state $B_m$ given $n$ units busy, $p_{n}\left(B_m\right) = P\{B_m \big| n \ \text{busy units}\}$.\\
        $p(n)$ & Probability distribution of the birth-death process for layer $n$, given by Equation \eqref{eq:bnd_stationary}.\\
    \hline
    \end{tabular}%
    \end{footnotesize}
  \label{tab_notation}%
\end{table}%

\begin{figure}[b!]
\begin{minipage}{.5\linewidth}
\centering
\subfloat[3 Units]{\label{main:a}\includegraphics[scale=.3]{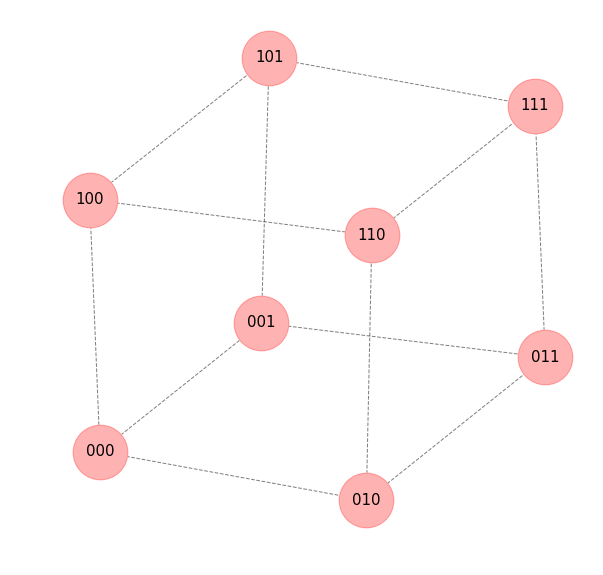}}
\end{minipage}%
\begin{minipage}{.5\linewidth}
\centering
\subfloat[5 Units]{\label{main:b}\includegraphics[scale=.3]{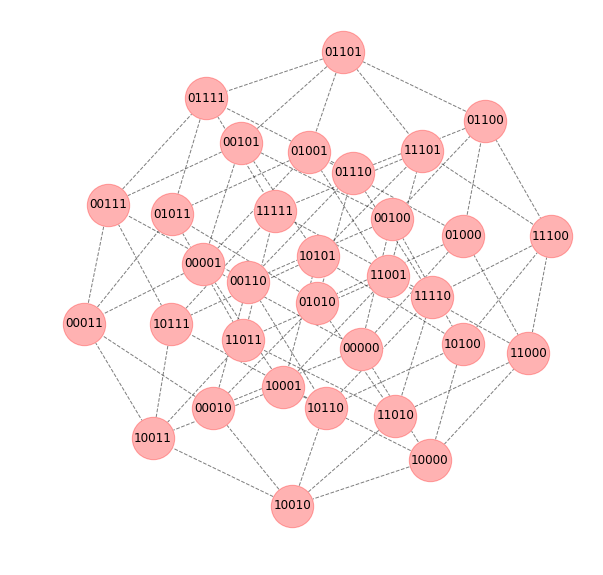}}
\end{minipage}
\caption{The States and Transitions of Spatial Hypercube Queueing Models.}
{\footnotesize 
    \linespread{1.0}\selectfont 
    \noindent\textit{Note.} An $N$-unit system can be modeled as an $N$-dimensional hypercube, where the vertices represent the states $B_m$, and the edges represent the possible transitions (both upward and downward) between states. For example, the state 110 in (a) represents two busy units and one free unit, and it can transition to three other states due to call arrivals or service completions. 
    \par}
\label{fig_hypercube}
\vspace{-0.3cm}
\end{figure}

A transition occurs when a call arrives at a particular node or a unit returns to service. For an $N$-unit system, the vertex of an $N$-dimensional hypercube represents each state.  The diagram in Figure \ref{fig_hypercube} shows the states and transitions of the hypercube for systems with three and five units. For an $N$-unit system, the steady-state balance equations specify that for each of the $2^N$ states, the rate of transitions out of each state must equal the rate of transitions into that state. To guarantee a probability distribution requires that the probabilities sum to one. Adding this condition and eliminating one balance equation as redundant yields the following system of equations, which we solve to find the steady-state distribution of system states. For $m=0,1\ldots, 2^N-1$, 
\begin{eqnarray}
&& P\left\{B_{m}\right\}\Big(\overbrace{\lambda_{m}}^{\text{arrival}}+\overbrace{\sum_{i=1}^N b_{m,i} \nu_i}^{\text{service completion}} \Big)= \overbrace{\sum_{l : d_{lm}^+=1} P\left\{B_{l}\right\} \lambda_{lm}}^{\text{upward transition}}+\overbrace{\sum_{l: d_{lm}^-=1} P\left\{B_{l}\right\}\mu_{lm}}^{\text{downward transition}}, \label{eq:balance_eq_heter}\\ 
&& \sum_{m=0}^{2^N-1} P\{B_m\} = 1, \label{eq:sumto1_intial}
\end{eqnarray}
where $d_{lm}^+$ and $d_{lm}^-$ are upward and downward Hamming distances, respectively, as defined in \cite{larson74hypercube}. Briefly, the upward (downward) Hamming distance is the minimum number of substitutions of 0 to 1 (1 to 0) required to transition from one state to another.
The left-hand side of \eqref{eq:balance_eq_heter} represents the rate of transitions out of state $B_m$, either by an arrival at rate $\lambda_m$ or by a service completion at rate $\sum_{i=1}^N b_{m,i} \nu_i$. Specifically, $\lambda_m=0$ if all units in state $B_m$ are busy; otherwise, we have $\lambda_m=\lambda$. The right-hand side represents all the states that transition into state $B_m$ with rate $\lambda_{lm}$ by an arrival, or transition out of state $B_l$ with rate $\mu_{lm}$ following a service completion. Specifically, we have
\begin{equation}
\begin{aligned}
    \lambda_{lm} &= \sum_{j=1}^J \lambda f_j\prod_{r=1}^{\eta_{j}(i_{lm})} \mathbb{1} \{B_l(\zeta_{j}(r)) \not= 0\}, \\
    \mu_{lm} &= \nu_{i_{lm}},
\end{aligned}
\label{eq:tran_rates}
\end{equation}
where $i_{lm}$ denotes the index of the unit that differs between states $B_l$ and $B_m$ when the Hamming distance between these two states is 1. The term $\eta_j\left(i_{lm}\right)$ represents the preference ranking of unit $i_{lm}$ at node $j$. Specifically, $\eta_j(i) = h$ if and only if $\zeta_j(h) = i$, meaning that $\eta_j(i)=\zeta_j^{-1}(i)$. 
\color{black}
Briefly, $\lambda_{lm}$ is the sum of arrival rates across all demand nodes $j$ that have unit $i_{lm}$ as the available unit with the highest preference. 
Solving the linear system provides the probability distribution of system states, which is then used to compute system performance metrics. 


\subsection{Model Reformulation}
\label{subsec:reformulations}

In this section, we reformulate the spatial hypercube queueing model, which serves as the foundation for the algorithm introduced in the following section. As an overview, we aggregate states with the same number of busy servers into a super state, thereby transforming the model into a birth-death structure. This reformulation allows us to express the stationary distribution in terms of conditional probabilities using the birth-death process.

We begin with defining $\mathscr{C}_n=\{m: w(B_m)=n, m=0,1\ldots,2^N-1\}$ as the set of all states with $n$ busy units, where $w(B_m)=\sum_{i=1}^N b_{m,i}$ is the number of units busy in state $B_m$. We also define $\mathscr{C}_n^l=\{m: m \in \mathscr{C}_n, d_{ml}^-=1\}$ as the set of states that transition down to state $B_l$ with a service completion. For this formulation, the state space is given by the set of vectors $\mathscr{C} = \{(n, B_m): m\in \mathscr{C}_n, n=0,1\ldots, N\}$. This formulation results in a two-dimensional Markov chain, as shown in the upper part of Figure \ref{fig_BnD}. 

The balance equations under this formulation, with the additional requirement that the state probabilities sum to one, are given, for $n=0,1\ldots,N$ and for $m\in \mathscr{C}_n$, by
\begin{eqnarray}
    &&P\left\{n,B_{m}\right\}\Big(\overbrace{\lambda_{m}}^{\text{arrival}}+ \overbrace{ \sum_{i=1}^N b_{m,i} \nu_i}^{\text{service completion}}\Big)= \overbrace{\sum_{l \in \mathscr{C}_{n-1}} P\left\{n-1,B_{l}\right\} \lambda_{l m}}^{\text{upward transition}}+\overbrace{\sum_{l \in \mathscr{C}^m_{n+1}} P\left\{n+1,B_{l}\right\} \mu_{lm}}^{\text{downward transition}}. \label{eq:transformer_bal_eq_heter}\\
    &&\sum_{n=0}^{N} \sum_{m \in \mathscr{C}_n} P\left\{n, B_m\right\} =1. \label{eq:sumto1}
\end{eqnarray}
Note that, by definition in \eqref{eq:tran_rates}, $\lambda_{lm} = \mu_{lm} = 0$ whenever the Hamming distance between two states $B_l$ and $B_m$ exceeds one. Therefore, Equation \eqref{eq:transformer_bal_eq_heter} is equivalent to Equation \eqref{eq:balance_eq_heter}. Similar to the hypercube formulation, both outward and inward flows arise in two ways: receiving a service call and completing a service. This two-dimensional formulation yields the same probability distribution of system states as the hypercube formulation. In this formulation, $P\left\{n, B_m\right\} = P\left\{B_m\right\}$ when $n = w\left(B_m\right)$; otherwise, $P\left\{n, B_m\right\} = 0$.

The spatial hypercube queueing model is a server-to-customer system, where units are dispatched based on a preference list. When calls are randomly assigned to any available unit with equal probability, the hypercube model simplifies into a finite birth-death process. To exploit the simplicity of the birth-death process, we combine its solution with hypercube transitions to capture the probability distribution of system states, while considering the server preferences in the transition probabilities. We begin by grouping states with the same number of busy servers into a single super state. These super states together form a birth-death process, as illustrated in Figure \ref{fig_BnD}. 

\begin{figure}[t]
\begin{center}
\includegraphics[width=9.0cm]{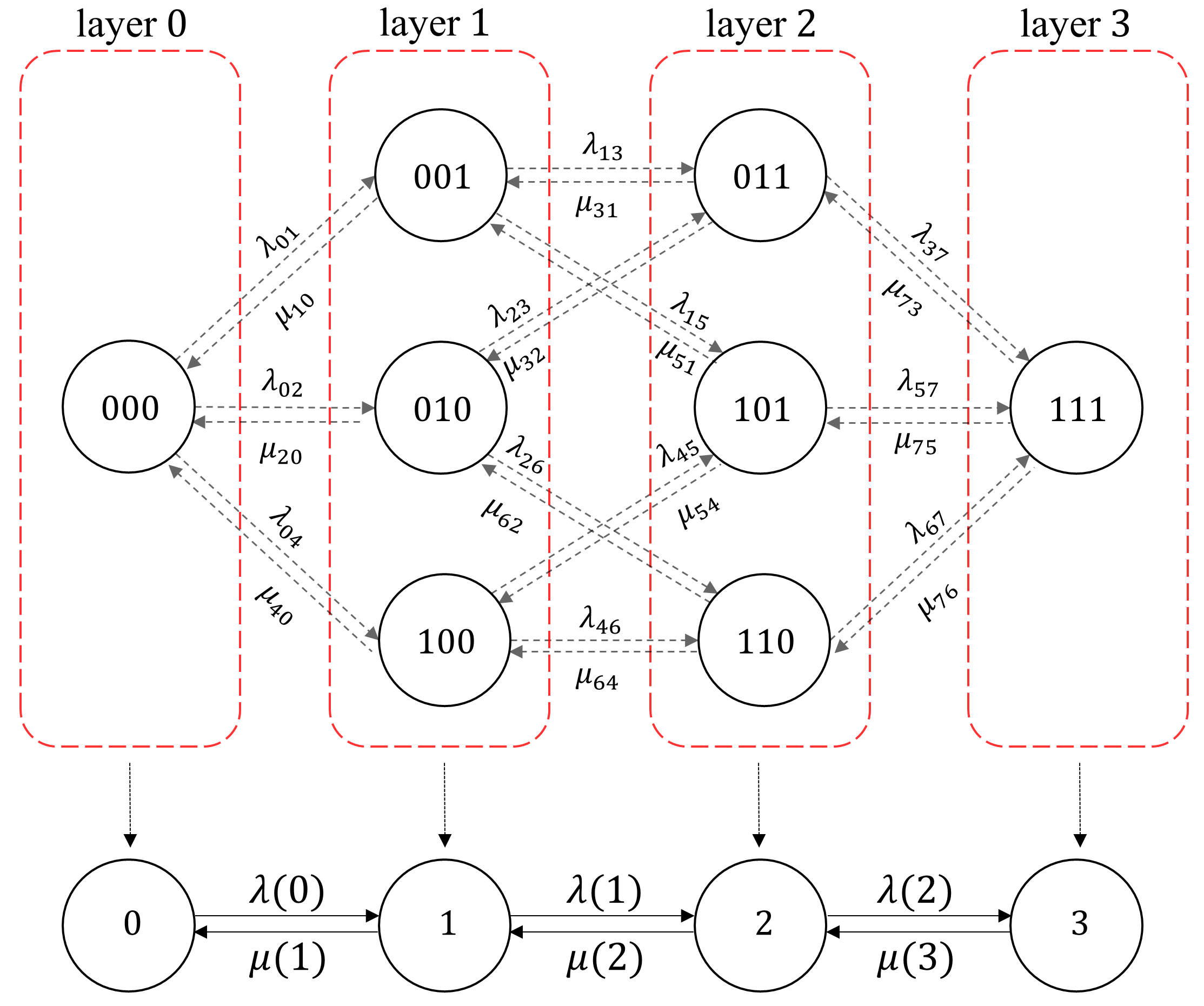}
\end{center}
\caption{Reformulation to a Birth-Death Process. }
 {\footnotesize 
    \linespread{1.0}\selectfont 
    \noindent\textit{Note.} We aggregate states with the same number of busy units into layers. These layers form a birth-death process. The transition rates $\lambda(n)$ and $\mu(n)$ in the birth-death process are linear combinations of the upward and downward transitions from the hypercube model.
    \par}
\label{fig_BnD}
\vspace{-0.3cm}
\end{figure}

Define $p_{n}\left(B_m\right)=P\left\{B_m |  n \text{ busy units}\right\}$ as the conditional probability that the system is in state $B_m$ given $n$ busy units, where $\sum_{m \in \mathscr{C}_n} p_{n}\left(B_m\right) = 1$ for all $n = 0,1, \ldots, N$. The key to developing this formulation is to recognize that the arrival rates and service rates $\lambda(n)$ and $\mu(n)$ can be derived from the transitions of the hypercube model and the conditional probabilities $p_{n}\left(B_m\right)$. 
We have for $n=0, \ldots, N$,
\begin{eqnarray}
    \lambda\left(n\right)&=&\sum_{m \in \mathscr{C}_n} p_{n}\left(B_m\right) \lambda_{m},  \label{eq:lambda}\\
    \mu(n) &=& \sum_{m \in \mathscr{C}_n} p_{n}\left(B_m\right) \mu_{m}, \label{eq:mu}
\end{eqnarray}
where $\lambda_{m} = \sum_{l\in \mathscr{C}_{n+1}} \lambda_{ml}$ and $\mu_{m} = \sum_{l\in \mathscr{C}_{n-1}} \mu_{ml} = \sum_{i=1}^N b_{m,i}\nu_i$ are the total rates leaving state $B_m$ by arrivals and by service completions, respectively. With the above newly introduced variables, we present a reformulation using conditional probabilities in the following proposition. We present the detailed proof in \S \ref{app_thm_equal_2_3} of the E-Companion. 

\begin{proposition}[Reformulation]
\label{lemma_bnd}
The conditional probability formulation is given, for \ins{$n=1, 2, \ldots, N-1$}, and $m \in \mathscr{C}_n$, by
\begin{eqnarray}
    &&p_n(B_m)=\sum_{l \in \mathscr{C}_{n-1}} p_{n-1}(B_l) \frac{\mu(n)}{\lambda(n-1)}\frac{\lambda_{l m}}{\lambda_{m}+\mu_{m}} + \sum_{l \in \mathscr{C}^m_{n+1}} p_{n+1}(B_l) \frac{\lambda(n)}{\mu(n+1)} \frac{\mu_{lm}}{\lambda_{m}+\mu_{m}},\label{eq:p_n(B_m)}\\
    &&\sum_{m \in \mathscr{C}_n} p_n(B_m) = 1, \label{eq:sumtol_p_n(B_m)}
\end{eqnarray}
and $p_{0}(B_0) = p_{N}(B_{2^N-1}) = 1$.
The probability distribution of system states is given, for $m=0,1,\ldots, 2^N-1$, by $P\{B_m\} = p(n)\cdot p_{n}(B_m)$,
where $n = w(B_m)$ and $p(n)$ follows
\begin{eqnarray}
p\left(0\right) = \frac{1}{1 + \sum_{n=1}^{N} \prod_{s=1}^{n} \frac{\lambda(s-1)}{\mu(s)}}, \quad p\left(n\right)&=& \frac{\prod_{s=1}^{n} \frac{\lambda(s-1)}{\mu(s)}}{1 + \sum_{n=1}^{N} \prod_{s=1}^{n} \frac{\lambda(s-1)}{\mu(s)}} \ \mbox{for} \ n = 1, \ldots, N.  \label{eq:bnd_stationary}
\end{eqnarray}
This solution yields the same probability distribution of system states as the original model. 
\end{proposition}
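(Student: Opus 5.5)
The argument runs in both directions between the hypercube balance equations \eqref{eq:transformer_bal_eq_heter}--\eqref{eq:sumto1} and the conditional system \eqref{eq:p_n(B_m)}--\eqref{eq:bnd_stationary}. Let $P\{B_m\}$ be the unique stationary distribution of the irreducible finite chain. Define the layer masses $p(n)=\sum_{m\in\mathscr{C}_n}P\{B_m\}$ and the conditional probabilities $p_n(B_m)=P\{B_m\}/p(n)$. Each $p(n)>0$ by irreducibility. With these definitions, \eqref{eq:sumtol_p_n(B_m)} holds trivially. Moreover, $\mathscr{C}_0$ and $\mathscr{C}_N$ are singletons, which gives $p_0(B_0)=p_N(B_{2^N-1})=1$. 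Finally, $\lambda(n)$ and $\mu(n)$ are defined by \eqref{eq:lambda}--\eqref{eq:mu} from these $p_n$.

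\textbf{Step 1 (cut equations).} I would sum the balance equations \eqref{eq:transformer_bal_eq_heter} over all $m$ in $\mathscr{C}_0\cup\cdots\cup\mathscr{C}_n$, or equivalently apply the cut argument between layers $n$ and $n+1$. Transitions occur only between adjacent layers, so the net flow across this cut is zero:
\[
\sum_{m\in\mathscr{C}_n}P\{B_m\}\lambda_m=\sum_{l\in\mathscr{C}_{n+1}}P\{B_l\}\mu_l .
\]
This uses $\lambda_m=\sum_{l\in\mathscr{C}_{n+1}}\lambda_{ml}$ and $\mu_l=\sum_{m}\mu_{lm}$, which follow by swapping the order of summation. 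Dividing the left side by $p(n)$ and the right side by $p(n+1)$ turns this into $p(n)\lambda(n)=p(n+1)\mu(n+1)$. These are exactly the detailed balance equations of a birth-death chain. Iterating them and normalizing with $\sum_n p(n)=1$ yields \eqref{eq:bnd_stationary}, and $P\{B_m\}=p(n)p_n(B_m)$ holds by definition.

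\textbf{Step 2 (conditional equations).} For $1\le n\le N-1$ and $m\in\mathscr{C}_n$, I would substitute $P\{k,B_l\}=p(k)p_k(B_l)$ into \eqref{eq:transformer_bal_eq_heter}. In this range $\lambda_m=\lambda$ and $\mu_m=\sum_i b_{m,i}\nu_i$, so the left side equals $p(n)p_n(B_m)(\lambda_m+\mu_m)$. Dividing by $p(n)(\lambda_m+\mu_m)$, the coefficients become $p(n-1)/p(n)$ and $p(n+1)/p(n)$. Step 1 gives $p(n-1)/p(n)=\mu(n)/\lambda(n-1)$ and $p(n+1)/p(n)=\lambda(n)/\mu(n+1)$. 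Substituting these yields \eqref{eq:p_n(B_m)}. This shows that the true stationary distribution satisfies the reformulated system.

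\textbf{Step 3 (converse).} The step I expect to be the main obstacle is showing that any solution of \eqref{eq:p_n(B_m)}--\eqref{eq:bnd_stationary} reproduces the original model, since the conditional system is nonlinear in $p_n$. I would reverse the algebra. Given any solution $\{p_n(B_m)\}$ with $\lambda(n)$ and $\mu(n)>0$, define $p(n)$ by \eqref{eq:bnd_stationary}; it satisfies $p(n)\lambda(n)=p(n+1)\mu(n+1)$. Multiplying \eqref{eq:p_n(B_m)} back by $p(n)(\lambda_m+\mu_m)$ then recovers \eqref{eq:transformer_bal_eq_heter} for $P\{B_m\}:=p(n)p_n(B_m)$ in the interior layers.

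The boundary layers $n=0$ and $n=N$ are not covered by \eqref{eq:p_n(B_m)} and must be handled separately. For them I would check the single balance equation in each layer. At $n=0$ it reads $P\{B_0\}\lambda=\sum_{l\in\mathscr{C}_1}P\{B_l\}\mu_{l0}$. The right side equals $p(1)\mu(1)$, and the left side equals $p(0)\lambda(0)$, so this is exactly the cut identity, which holds by construction. The case $n=N$ is symmetric.

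Normalization follows from $\sum_m p_n(B_m)=1$ together with $\sum_n p(n)=1$. Uniqueness of the stationary distribution of the irreducible finite chain then forces $P\{B_m\}$ to equal the original solution. This establishes that the reformulation yields the same probability distribution of system states.
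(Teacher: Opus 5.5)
Your proposal is correct. Its core, Step~2, is exactly the paper's argument: substitute $P\{n,B_m\}=p(n)\,p_n(B_m)$ into the balance equations, then replace $p(n-1)/p(n)$ and $p(n+1)/p(n)$ by $\mu(n)/\lambda(n-1)$ and $\lambda(n)/\mu(n+1)$. The paper only asserts that the layer masses $p(n)$ follow the birth--death product form with rates $\lambda(n),\mu(n)$. Your Step~1 cut argument supplies the justification it omits. Your Step~3 converse (boundary layers $n=0,N$ plus uniqueness of the normalized solution of the balance equations) makes the ``same distribution'' claim explicit, which the paper handles only informally via a uniqueness remark in the proof of Theorem~\ref{thm_convergence_heter}.
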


We now extend this reformulation to systems with a finite buffer capacity of $C$. Let $\tilde{P}\{c\}$ denote the probability of having $c$ calls waiting in the system when all $N$ units are busy. Therefore, when there are $c$ calls waiting, the total number of calls in the system is $N+c$. For the queued system, the balance equation \eqref{eq:balance_eq_heter} remains valid, except for the state where all units are busy, i.e., the state $\{1,1,\ldots,1\}$. Specifically, for this state, we now have: 
\begin{equation*}
    P\left\{1,1,\ldots,1\right\}\big(\overbrace{\lambda}^{\text{arrival}}+\overbrace{\sum_{i=1}^N \nu_i}^{\text{service completion}}\big)= \overbrace{\sum_{i=1}^N P\{1,1,\ldots,\underbrace{0}_{i^{{\rm th}}},\ldots,1\} \lambda}^{\text{upward transition}}+\overbrace{\tilde{P}\left\{1\right\}\sum_{i=1}^N \nu_i}^{\text{downward transition}}.
\end{equation*}
For this state, it transitions to state $\tilde{P}\{1\}$, where one call is waiting, with a rate of $\lambda$ due to a service call arrival. 
Furthermore, the transition rate from $\tilde{P}\{1\}$ to this state is $\sum_{i=1}^N \nu_i$. Additionally, the states $\tilde{P}\{c\}$ that represent waiting calls follow the standard birth-death transition equation. For $c=1,\ldots,C-1$, we have
\begin{equation} \label{eq:finite_buffer_queue}
   \tilde{P}\{c\}(\lambda+\sum_{i=1}^N  \nu_i)= \tilde{P}\{c-1\}\lambda + \tilde{P}\{c+1\} \sum_{i=1}^N  \nu_i,
\end{equation}
and for $c=C$, we have $\tilde{P}\{C\}\sum_{i=1}^N  \nu_i= \tilde{P}\{C-1\}\lambda$. Note that, when $c=1$, the state $\tilde{P}\{c-1\}=\tilde{P}\{0\}:=P\{1,1,\ldots,1\}$. 
Lastly, the normalization Equation \eqref{eq:sumto1_intial} becomes
\begin{equation*}
    \sum_{m=0}^{2^N-1} P\{B_m\} + \sum_{c=1}^{C} \tilde{P}\{c\}= 1,
\end{equation*}
which ensures that the probabilities of all states in the system sum to one. For the transition rates of the birth-death process, we have
\begin{eqnarray*}
    \lambda\left(n\right)&=& \begin{cases}\sum_{m \in \mathscr{C}_n} p_{n}\left(B_m\right) \lambda_{m} & \text { for } n=0,1, \ldots, N, \\ \lambda & \text { for } n=N+1, N+2, \ldots, N+C-1,\\
    0 & \text { for } n=N+C;
    \end{cases} \\
    \mu(n) &=&  \begin{cases}\sum_{m \in \mathscr{C}_n} p_{n}\left(B_m\right) \mu_{m} & \text { for } n=0,1, \ldots, N, \\ \sum_{i=1}^N  \nu_i & \text { for } n=N+1, N+2, \ldots, N+C,
    \end{cases}
\end{eqnarray*}
where $\lambda_{m}$ and $\mu_{m}$ are analogous to those in the loss system.

\begin{figure}[t]
\begin{center}
\includegraphics[width=15.5cm]{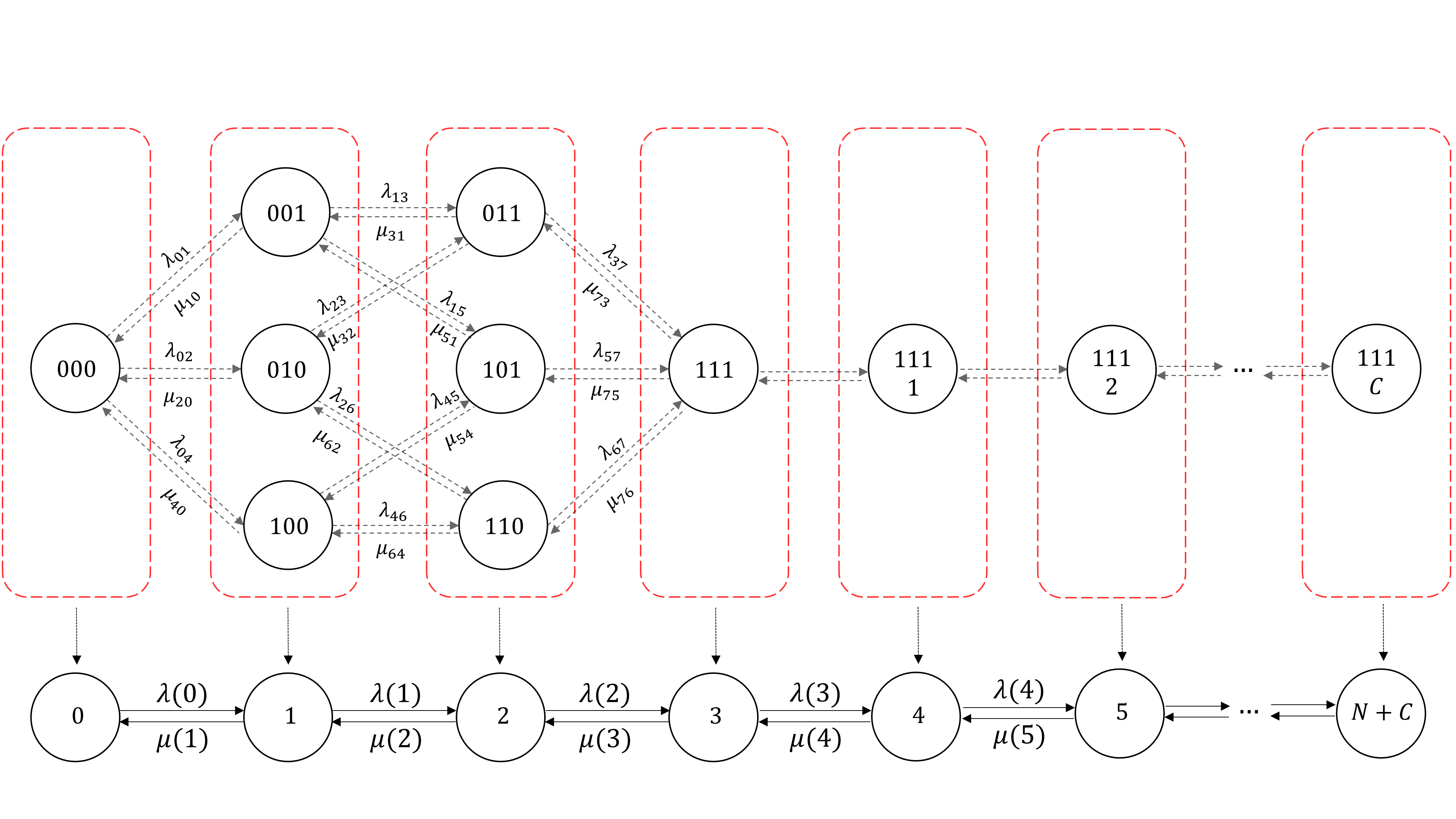}
\caption{Reformulation to a Birth-Death Process for a Finite Buffer System.}
\label{fig_BnD_queued}
\end{center}
 {\footnotesize 
    \linespread{1.0}\selectfont 
    \noindent\textit{Note.} Similar to the no-queue system, we aggregate states with the same number of busy units into layers. When all units are busy, customers wait until the buffer capacity $C$ is reached. 
    \par}
\end{figure}

The illustration for a three-unit system is shown in Figure \ref{fig_BnD_queued}. Using the system balance equations defined earlier for the finite buffer queued system, we now derive the following reformulation.

\begin{proposition}[Reformulation, Finite Buffer]
\label{lemma:finite_buffer_solution}
The conditional probability formulation for the finite buffer system is given, for $n=1,2,\ldots, N-1$, and $m \in \mathscr{C}_n$, by
\begin{eqnarray*}
    &&p_n(B_m)=\sum_{l \in \mathscr{C}_{n-1}} p_{n-1}(B_l) \frac{\mu(n)}{\lambda(n-1)}\frac{\lambda_{l m}}{\lambda_{m}+\mu_{m}} + \sum_{l \in \mathscr{C}^m_{n+1}} p_{n+1}(B_l) \frac{\lambda(n)}{\mu(n+1)} \frac{\mu_{lm}}{\lambda_{m}+\mu_{m}},\\
    &&\sum_{m \in \mathscr{C}_n} p_n(B_m) = 1,
\end{eqnarray*}
and $p_{0}(B_0) = p_{N}(B_{2^N-1}) = 1$.
The probability distribution of system states $P\{B_m\}$ and $\tilde{P}\{c\}$ is given, for $m=0,1,\ldots, 2^N-1$ and $c = 1, 2, \ldots, C$, by
\begin{eqnarray*}
    && P\{B_m\} = p(w(B_m))\cdot p_{w(B_m)}(B_m),\\
    && \tilde{P}\{c\} = p(N+c),
\end{eqnarray*}
\color{black}
where $p(n) = G(n)p(0)$ is the steady-state solution to the birth-death process, where
\begin{eqnarray*}
    G(n) &=& \begin{cases}\frac{\lambda(0)\ldots\lambda(n-1)}{\mu(1)\ldots\mu(n)} & \text { for } n=1,2, \ldots, N, \\ \frac{\lambda(0)\ldots\lambda(n-1)}{\mu(1)\ldots\mu(N-1)\mu(N)^{n-N+1}} & \text { for } n=N+1, N+2, \ldots, N+C,\end{cases}
\end{eqnarray*}
and 
\begin{equation*}
p(0)=\left[1 + \sum_{n=1}^{N+C} G(n)\right]^{-1}.
\end{equation*}
\end{proposition}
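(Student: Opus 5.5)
The plan is to reduce to the same argument as Proposition~\ref{lemma_bnd}. The finite buffer only changes the balance equation of the all-busy state $\{1,\ldots,1\}$, the chain of queue states $\tilde P\{c\}$, and the normalization. The layers $n=1,\ldots,N-1$ of the hypercube keep exactly the balance equations \eqref{eq:transformer_bal_eq_heter}. I will argue in two directions. First, the true stationary distribution of the finite-buffer chain, written in product form, satisfies the stated system. Second, any solution of the stated system gives a stationary distribution. Since the chain is finite and irreducible (for $\lambda>0$ and $\nu_i>0$), its stationary distribution is unique, and the two directions together give the claim.

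\textbf{Cut equations.} Let $P$ be the stationary distribution. Set $p(n)=\sum_{m\in\mathscr{C}_n}P\{B_m\}$ for $n\le N$ and $p(N+c)=\tilde P\{c\}$. For $p(n)>0$ define $p_n(B_m)=P\{B_m\}/p(n)$; then \eqref{eq:sumtol_p_n(B_m)} holds by construction, and $p_0(B_0)=p_N(B_{2^N-1})=1$ because those layers are singletons.

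The main step is the cut equations. Sum the balance equations over all states with at most $n-1$ busy units or waiting calls. Flows internal to this set cancel, which leaves
\[
\sum_{l\in\mathscr{C}_{n-1}}P\{B_l\}\lambda_l=\sum_{m\in\mathscr{C}_n}P\{B_m\}\mu_m ,
\]
that is, $p(n-1)\lambda(n-1)=p(n)\mu(n)$ for $n\le N$, using the definitions of $\lambda(\cdot)$ and $\mu(\cdot)$. For $n=N+1$ the same cut gives $P\{1,\ldots,1\}\lambda=\tilde P\{1\}\sum_i\nu_i$. For $n>N+1$ the cuts follow from \eqref{eq:finite_buffer_queue} and the equation for $c=C$, giving $\tilde P\{c-1\}\lambda=\tilde P\{c\}\sum_i\nu_i$.

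Since $p_N(B_{2^N-1})=1$, we have $\mu(N)=\sum_i\nu_i=\mu(n)$ for every $n>N$. Telescoping the cut equations then gives $p(n)=G(n)p(0)$, with the product of the $\mu$'s equal to $\mu(1)\cdots\mu(N-1)\mu(N)^{n-N+1}$, exactly as in the definition of $G(n)$. The extended normalization equation then fixes $p(0)=[1+\sum_{n=1}^{N+C}G(n)]^{-1}$.

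\textbf{Conditional equations.} Fix $1\le n\le N-1$ and $m\in\mathscr{C}_n$. Divide \eqref{eq:transformer_bal_eq_heter} by $p(n)(\lambda_m+\mu_m)$ and use $P\{B_l\}=p(n\pm1)p_{n\pm1}(B_l)$. The cut equations give
\[
\frac{p(n-1)}{p(n)}=\frac{\mu(n)}{\lambda(n-1)},\qquad \frac{p(n+1)}{p(n)}=\frac{\lambda(n)}{\mu(n+1)} .
\]
Substituting these yields the stated equation for $p_n(B_m)$. At $n=N-1$ the upper neighbour is the singleton layer $N$. The ratio $\lambda(N-1)/\mu(N)$ still comes from a cut between hypercube layers, so the buffer does not enter at this step.

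\textbf{Converse and main obstacle.} Suppose $p_n(\cdot)$ solves the conditional system. Define $\lambda(n)$, $\mu(n)$, $G(n)$ and $P$ as in the statement. Multiplying each conditional equation back by $p(n)(\lambda_m+\mu_m)$ recovers the balance equations for layers $1,\ldots,N-1$, because $p(n)$ satisfies the cut relations by construction. The queue equations hold by the birth-death form of $G(n)$ for $n\ge N$.

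The remaining balance equations are those of $B_0$ and of the all-busy state. These are the step I expect to need care. For them I will use that the sum of all balance equations over a layer reduces to the difference of two cut equations, which hold by construction. For $B_0$ and the all-busy state, which are singleton layers, the balance equation is itself this layer sum, so it holds. For the all-busy state this layer sum includes the new inflow $\tilde P\{1\}\sum_i\nu_i$ and outflow $\lambda$. This is where the modified equation must be checked against the $n=N+1$ cut.

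Uniqueness of the stationary distribution then shows that this $P$ coincides with the original one.
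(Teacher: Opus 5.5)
Your proof is correct. Its central step is the cancellation the paper performs: write $P\{B_m\}=p(n)\,p_n(B_m)$, divide the unchanged balance equations of layers $1,\ldots,N-1$ by $p(n)(\lambda_m+\mu_m)$, and replace $p(n\pm1)/p(n)$ by rate ratios. You differ in how $p(n)$ is obtained. The paper simply takes $p(n)$ to be the birth-death stationary law with rates $\lambda(n),\mu(n)$, appends \eqref{eq:finite_buffer_queue} for $n>N$, and substitutes. It never argues why the layer marginals of the hypercube chain obey such a law. Your cut equations supply exactly that. Every transition changes the level (busy units plus waiting calls) by one, so flow balance across each cut gives $p(n-1)\lambda(n-1)=p(n)\mu(n)$, with $\lambda(\cdot),\mu(\cdot)$ the conditional averages, and this telescopes to $p(n)=G(n)p(0)$. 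Your converse is also absent from the paper's argument. It rebuilds the balance equations of the singleton layers $B_0$ and $\{1,\ldots,1\}$, which the conditional system replaces by $p_0(B_0)=p_N(B_{2^N-1})=1$, from the layer-sum identity, and then invokes uniqueness. The last step of the proof of Theorem~\ref{thm_convergence_heter} implicitly relies on this converse. The paper's route buys brevity; yours buys a two-sided characterization: every solution of the conditional system yields the stationary distribution, not merely the reverse.

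Three details should be made explicit. First, the telescoping at $n=N+1$ matches $G(N+1)$ only if $\lambda(N)=\lambda$. That means taking $\lambda_{2^N-1}=\lambda$ in the buffered system, rather than the loss-system value $0$ that a literal reading of ``analogous to the loss system'' would give. Second, in the converse you need $\mu(n)\neq 0$ for $G(n)$ to exist, which is automatic if you require $p_n(\cdot)\ge 0$. Third, before appealing to uniqueness, add a one-line normalization check using $\sum_m p_n(B_m)=1$ and the choice of $p(0)$. For a finite irreducible chain, uniqueness holds among all real solutions of the balance and normalization equations, so you need not show separately that the constructed $P$ is nonnegative.
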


The logic of the proof closely follows the approach used in proving Proposition \ref{lemma_bnd}, where the zero-capacity system was addressed. Specifically, we express the joint probability as $P\{n,B_m\} = p(n) \cdot p_n(B_m)$ in Equation \eqref{eq:transformer_bal_eq_heter} and cancel equivalent terms on both sides to derive the results for $n \leq N$. For $n > N$, we incorporate the balance equations from Equation \eqref{eq:finite_buffer_queue}, noting that these, together with the $n \leq N$ part, form the birth-death process.

\begin{remark}
    Proposition~\ref{lemma:finite_buffer_solution} also applies to a system with an infinite waiting capacity. In a scenario with an infinite capacity, a stable system requires that $\frac{\lambda}{\mu(N)}<1$. The formulation for the infinite capacity system is consistent except for the following distinction:
    \begin{equation*}
    p(0)=\left[1 + \sum_{n=1}^{N-1} G(n) + \frac{\lambda^N}{\mu(1)\ldots\mu(N)} \frac{1}{1-(\lambda / \mu(N))}\right]^{-1}.
    \end{equation*}
\end{remark}
\color{black}
\color{black}

\section{Algorithm and Convergence Analysis}
\label{sec_convergence}
In this section, we derive an iterative algorithm based on the conditional probability formulation \eqref{eq:p_n(B_m)} to determine the probability distribution of system states for the zero-queue system. Section \ref{ssec_algo_algo} provides a detailed description of the algorithm, while Sections \ref{ssec_algo_conv} and \ref{ssec_seqComplexity} analyze its convergence and computational complexity, respectively.

\subsection{\ins{CPU} Algorithm} \label{ssec_algo_algo}
The algorithm builds on the reformulated expression in \eqref{eq:p_n(B_m)}. A key challenge in developing this algorithm lies in the interdependence between the rate $\mu(n)$ in \eqref{eq:mu} and the steady-state probability $P\{B_m\}$. It is therefore essential to ensure that the values within each layer converge before updating the system states.

Let $p_n^{k,r}(B_m)$ denote the value of $p_n(B_m)$ at the $r$-th update within the $k$-th iteration, and let $p_n^{k}(B_m)$ represent its converged value at the $k$-th iteration. The iterative formulation for computing the conditional probabilities $p_n(B_m)$, for $m \in \mathscr{C}_n$ and $n = 0, 1, \dots, N$, is defined as follows, 
\begin{equation} 
    p_n^{k,r}(B_m)=\sum_{l \in \mathscr{C}_{n-1}} p_{n-1}^{k}(B_l) \frac{\mu^{k,r-1}(n)}{\lambda^{k}(n-1)}\frac{\lambda_{l m}}{\lambda_{m}+\mu_{m}} + \sum_{l \in \mathscr{C}^m_{n+1}} p_{n+1}^{k-1}(B_l) \frac{\lambda^{k-1}(n)}{\mu^{k-1}(n+1)}\frac{\mu_{lm}}{\lambda_{m}+\mu_{m}},  \label{eq:updateHeter} 
\end{equation}
and
\begin{eqnarray} 
    \lambda^{k}(n) &=& \sum_{m \in \mathscr{C}_n} p_{n}^{k}\left(B_m\right) \lambda_{m},  \label{eq:lambdaHeter} \\ 
    \mu^{k,r}(n) &=& \sum_{m \in \mathscr{C}_n} p_{n}^{k,r}\left(B_m\right) \mu_{m}, \label{eq:muHeter} 
\end{eqnarray}
where $\mu^{k,r}(n)$ denotes the value of $\mu(n)$ after the $r$-th update during the $k$-th iteration, with $\mu^{k}(n)$ representing the corresponding converged value after all updates in the $k$-th iteration. Similarly, $\lambda^{k}(n)$ represents the value of $\lambda(n)$ at the $k$-th iteration. 
At the beginning of each iteration, we initialize $p_n^{k,0}(B_m) = p_n^{k-1}(B_m)$ for the first update of the $k$-th iteration. 

In the iterative formulation, the term $\mu^{k,r-1}(n)$ refers to the value of $\mu(n)$ at the previous update $r-1$ within the same iteration $k$, reflecting the dependency on the previous state of the system. This term appears in the equation because the rate $\mu(n)$ at layer $n$ relies on the values of $p_n(B_m)$ from the previous update. In contrast, the rest of the formula does not include $r$-dependent terms. 

We refer to our algorithm as the Conditional Probability Update (CPU) algorithm, as detailed  in Algorithm~\ref{algo-BnDHeter}. In step 2, we generate the transition rates and initialize $p_n^1(B_m) = {1}/{{N \choose n}}$. The iteration process begins in step 3 and continues until the maximum absolute error between iterations is smaller than a predefined threshold $\epsilon > 0$, which can be set arbitrarily small. During each iteration $k$, since $\mu(n)$ depends on the evolving steady-state probability $P\{B_m\}$, an inner loop is used to compute $p_{n}^{k}(B_m)$. Once the conditional probabilities $p^k_n(B_m)$ are obtained at the final step, the solution to the spatial queueing problem is achieved by applying the results from Proposition~\ref{lemma_bnd}. 
\ins{We note that this algorithm is designed for the zero-queue case. When a finite buffer is included, the queueing component follows a standard birth-death process with heterogeneous service rates, which can be determined directly using the same iterative framework.} 

Another notable consideration is that generating transition rates can be computationally intensive. In Algorithm \ref{algo-BnDHeter}, we rely on the TOUR algorithm introduced by \cite{larson74hypercube} to efficiently compute and store these rates. The TOUR algorithm traverses the hypercube one state at a time, updating coefficients along the way. For a detailed description of this procedure, we refer interested readers to \cite{larson74hypercube}.

\begin{algorithm}[t] 
    \caption{Conditional Probability Update (CPU)}
    \label{algo-BnDHeter}
    {\fontsize{10}{16}\selectfont
    \begin{algorithmic}[1]
        \State  {\textbf{Input:}} Arrival rate $\lambda$, service rates $\nu_i$, demand fractions $f_j$, preference lists $\zeta_j(h)$, and tolerances $\epsilon$ and $\delta$.
        \State  \textbf{Initialize:} Generate the transition coefficients $\lambda_{lm}$ and $\mu_{lm}$ for all $l, m \in \mathscr{C}_n$ according to the TOUR algorithm \citep{larson74hypercube}. Set $k=1$, $r=1$, $p_n^0(B_m)=0$, $p_n^{1}(B_m)=p_n^{1, 1}(B_m)=\frac{1}{{N \choose n}}$, for all $m \in \mathscr{C}_{n}$ and $n = 0,\ldots,N$. 
        \While {$\max_{n,m} (|p_n^{k}(B_m)-p_n^{k-1}(B_m)|)>\epsilon$}
        \State $k = k + 1$.
        \State $p_n^{k,0}(B_m)=p_n^{k-1}(B_m)$.
        \For {$n = 1 \ \text{to} \ N-1$}
        \State $r = 1$.
        \While {$\max_{m} (|p_n^{k,r}(B_m)-p_n^{k,r-1}(B_m)|)>\delta$}
        \State $r = r + 1$.
        \State Update $p_n^{k,r}(B_m)$ by Equation \eqref{eq:updateHeter}.
        \State Update $\mu^{k,r}(n)$ by  Equation \eqref{eq:muHeter}.
        \EndWhile
        \State Update $\lambda^{k}(n)$ by Equation \eqref{eq:lambdaHeter}. Let $\mu^{k}(n) = \mu^{k,r}(n)$.
        \EndFor
        \EndWhile
        \State Calculate $p(n)$ by Equation \eqref{eq:bnd_stationary}.
        \State \textbf{Output:} {$P\{B_m\}=p(w(B_m))\cdot p_{w(B_m)}^{k}(B_m)$}, for $m=0, \ldots, 2^N-1$.
        \end{algorithmic}
    }
\end{algorithm}

\subsection{Convergence Analysis} \label{ssec_algo_conv}
Next, we present the theoretical results of the CPU algorithm, focusing on its convergence and the speed of convergence. We start by establishing two key properties: \textit{within-layer convergence} and the \textit{normalization property}. Within-layer convergence ensures a stable value for $p_n^{k}(B_m)$ for each layer $n$. The normalization property, on the other hand, ensures that the probabilities naturally sum to one after each iteration of Algorithm \ref{algo-BnDHeter}, eliminating the need for manual normalization to satisfy \eqref{eq:sumtol_p_n(B_m)}. This property is crucial for proving the convergence of the algorithm. Before formally stating the properties, we first introduce the following assumption.
\begin{assu}\label{assu-phi}
      For all layers $n$, let $\overline{\mu}_n = \max\left\{\mu_m, m \in \mathscr{C}_{n}\right\}$ and $\underline{\mu}_n = \min\left\{\mu_m, m \in \mathscr{C}_{n}\right\}$ denote the maximum and minimum downward transition rates in layer $n$, respectively. For any system with $N \geq 2$, we assume that $\Phi_n < 1$ for any $1 \leq n\leq N-1$, where
      \begin{eqnarray*}
      \Phi_1 =  \frac{\lambda}{\lambda + \underline{\mu}_1}\gamma_2, \quad 
     \Phi_n =  \frac{\lambda}{\lambda + \underline{\mu}_n} \left(\sum_{q=2}^{n} \gamma_q \prod_{j=q}^{n} \frac{ \overline{\mu}_j}{\lambda + \underline{\mu}_{j-1}} + \gamma_{n+1} \right),  \ (n \ge 2), \quad \text{and} \quad  \gamma_n  = \frac{ \overline{\mu}_{n}}{\underline{\mu}_{n}}.
\end{eqnarray*}
\end{assu}

\ins{We note that Assumption~\ref{assu-phi} is a technical assumption. Its restrictiveness depends on the degree of heterogeneity in service rates among units. When service rates are homogeneous, the assumption holds automatically. In practice, service rates across servers do vary but usually within a limited range, so the assumption is expected to hold for most real systems. Our numerical experiments show that the algorithm converges rapidly across a wide set of scenarios, and we have not encountered any case where convergence fails.}

\color{black}


\begin{lemma}
\label{lemma:conv&sum}
    The following two properties hold for all iterations $k$:
    \begin{enumerate}
        \item[1)] \textup{(\textbf{Convergence within Layer})}. The conditional probabilities $p_{n}^{k,r}(B_m)$ converge for all states $B_m$ as $r$ increases, i.e., $\forall n=0,1,\ldots, N$ and $\forall m \in \mathscr{C}_{n}$,
        \begin{equation*}
            \lim_{r\rightarrow \infty}\left|p_n^{k,r+1}(B_m)-p_n^{k,r}(B_m)\right| = 0.
        \end{equation*} 
        \item[2)] \textup{(\textbf{Normalization Property})}. The conditional probabilities $p_n^{k,r}(B_m)$ sum to one for all layers $n$ as $r$ increases, i.e., $\forall n=0,1,\ldots, N$,
        \begin{equation*}
            \lim_{r\rightarrow \infty} \sum_{m \in \mathscr{C}_n} p_n^{k,r}(B_m) = 1.
        \end{equation*}
    \end{enumerate}
\end{lemma}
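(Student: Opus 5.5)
The plan is to fix an iteration $k$ and a layer $n\in\{1,\dots,N-1\}$ and reduce the inner loop of Algorithm~\ref{algo-BnDHeter} to a scalar affine recursion in $\mu^{k,r}(n)$. Layers $n=0$ and $n=N$ are trivial, since $p_0(B_0)=p_N(B_{2^N-1})=1$ are never updated. In \eqref{eq:updateHeter}, every quantity except $\mu^{k,r-1}(n)$ is frozen during the inner loop, namely $p_{n-1}^k$, $\lambda^k(n-1)$, $p_{n+1}^{k-1}$, $\lambda^{k-1}(n)$ and $\mu^{k-1}(n+1)$. Hence
\begin{equation*}
p_n^{k,r}(B_m)=a_m\,\mu^{k,r-1}(n)+b_m,
\end{equation*}
with $a_m=\sum_{l\in\mathscr{C}_{n-1}}p^k_{n-1}(B_l)\,\lambda_{lm}/\bigl(\lambda^k(n-1)(\lambda_m+\mu_m)\bigr)\ge 0$ and $b_m\ge0$ collecting the downward-transition term. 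Substituting into \eqref{eq:muHeter} gives
\begin{equation*}
\mu^{k,r}(n)=A\,\mu^{k,r-1}(n)+\beta,\qquad A=\sum_{m\in\mathscr{C}_n}a_m\mu_m,\qquad \beta=\sum_{m\in\mathscr{C}_n} b_m\mu_m .
\end{equation*}

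\textbf{Contraction.} The key step is to show $A<1$. For $n<N$ every state $B_m\in\mathscr{C}_n$ has a free unit, so $\lambda_m=\lambda$, and therefore $\mu_m/(\lambda_m+\mu_m)\le \overline{\mu}_n/(\lambda+\overline{\mu}_n)=:\rho_n<1$. Exchanging the order of summation gives $\sum_{m\in\mathscr{C}_n}\lambda_{lm}=\lambda_l$, so
\begin{equation*}
A\le\rho_n\sum_{l\in\mathscr{C}_{n-1}}\frac{p^k_{n-1}(B_l)\lambda_l}{\lambda^k(n-1)}=\rho_n,
\end{equation*}
using the definition \eqref{eq:lambdaHeter} of $\lambda^k(n-1)$. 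Then $|\mu^{k,r+1}(n)-\mu^{k,r}(n)|\le\rho_n^{\,r}\,|\mu^{k,1}(n)-\mu^{k,0}(n)|$, so $\mu^{k,r}(n)$ converges geometrically to $\mu^*=\beta/(1-A)$. Each $p_n^{k,r}(B_m)$ is affine in $\mu^{k,r-1}(n)$, so it converges as well, which gives property 1). Applying this layer by layer for $n=1,\dots,N-1$ is legitimate because the quantities from layer $n-1$ at iteration $k$ have already been fixed.

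\textbf{Normalization.} I will argue by induction on $k$, with the hypothesis that $\sum_m p_n^{k-1}(B_m)=1$ for all $n$. The base case holds because the uniform initialization $p_n^1(B_m)=1/{N\choose n}$ is normalized. At the limit point $p^*_n$ of the inner loop, multiply the fixed-point equation by $(\lambda_m+\mu_m)$ and sum over $m\in\mathscr{C}_n$. On the left this gives $\lambda\sum_m p^*_n(B_m)+\mu^*$. On the right, the upward part becomes $\mu^*\sum_l p^k_{n-1}(B_l)\lambda_l/\lambda^k(n-1)=\mu^*$. For the downward part, note that $\sum_{m:\,l\in\mathscr{C}^m_{n+1}}\mu_{lm}=\mu_l$, so it becomes $\frac{\lambda^{k-1}(n)}{\mu^{k-1}(n+1)}\sum_l p^{k-1}_{n+1}(B_l)\mu_l=\lambda^{k-1}(n)$. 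Cancelling $\mu^*$ leaves $\lambda\sum_m p^*_n(B_m)=\lambda^{k-1}(n)=\lambda\sum_m p^{k-1}_n(B_m)=\lambda$, again because $\lambda_m=\lambda$ in layer $n<N$. Hence the limit is normalized, which gives property 2) and closes the induction.

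\textbf{Main obstacle.} The step I expect to need the most care is the downward-term identity $\sum_l p^{k-1}_{n+1}(B_l)\mu_l=\mu^{k-1}(n+1)$. This identity uses the converged value $\mu^{k-1}(n+1)$, which comes from the previous iteration. I therefore need to take it as exact, i.e. as the limit of the inner loop rather than the value at a finite tolerance $\delta$. Equivalently, property 2) must be stated in the limit $r\to\infty$, as the lemma indeed does.
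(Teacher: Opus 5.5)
Your proof is correct. For property 1) it takes a genuinely different route from the paper; for property 2) it is essentially the paper's argument.

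\textbf{Property 1).} The paper works with the full vector of increments $\Delta^{k,r}_{n,m}=p_n^{k,r}(B_m)-p_n^{k,r-1}(B_m)$. It substitutes $\lambda^k(n-1)=\lambda$ from the inductive hypothesis on layer $n-1$ and derives the weighted bound $\sum_{m}(\lambda+\mu_m)|\Delta^{k,r}_{n,m}|\le\sum_m\mu_m|\Delta^{k,r-1}_{n,m}|$. Telescoping then shows that $\lambda\sum_{s<r}\sum_m|\Delta^{k,s}_{n,m}|$ stays bounded, and the paper concludes that the increments tend to zero.

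You instead notice that the only $r$-dependent quantity in \eqref{eq:updateHeter} is the scalar $\mu^{k,r-1}(n)$. The whole inner loop therefore collapses to the one-dimensional affine map $\mu\mapsto A\mu+\beta$, with $A\le\overline{\mu}_n/(\lambda+\overline{\mu}_n)<1$. This buys you three things:
\begin{itemize}
\item an explicit geometric rate in $r$;
\item convergence of the iterates themselves, not just vanishing increments;
\item the closed-form limit $\mu^*=\beta/(1-A)$, which shows the inner while-loop of Algorithm~\ref{algo-BnDHeter} could be replaced by a direct computation.
\end{itemize}
Your bound on $A$ uses the definition \eqref{eq:lambdaHeter} of $\lambda^k(n-1)$, not its normalization. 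So normalization of layer $n-1$ is needed only to ensure $\lambda^k(n-1)>0$, which keeps the update well defined. The paper's layer-sum style has one advantage: it matches the technique reused in the proof of Theorem~\ref{thm_convergence_heter}.

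\textbf{Property 2).} Like the paper, you weight by $\lambda_m+\mu_m$, sum over the layer, and use $\sum_m\lambda_{lm}=\lambda_l$, $\sum_m\mu_{lm}=\mu_l$ together with the induction hypothesis $\lambda^{k-1}(n)=\lambda$. The only difference is that you apply this directly at the fixed point, while the paper applies it at step $r$ and then passes to the limit.

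\textbf{Correction to your ``main obstacle''.} The identity $\sum_l p^{k-1}_{n+1}(B_l)\mu_l=\mu^{k-1}(n+1)$ holds by construction through \eqref{eq:muHeter}, even at a finite stopping index. What genuinely requires the exact $r\to\infty$ limit is $\lambda^{k-1}(n)=\lambda$, that is, exact normalization of the previous iterate. At a finite stopping index the layer sum is off by $\bigl(\mu^{k-1,r-1}(n)-\mu^{k-1,r}(n)\bigr)/\lambda$.
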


\textbf{Proof} (Sketch): The proof of convergence proceeds by induction. In the inductive step, we first apply Equations \eqref{eq:updateHeter}-\eqref{eq:muHeter} and the triangle inequality to establish the bound of the absolute difference value $ \left| \Delta^{k,r}_{n,m}\right| = \left| p_n^{k,r}(B_m) - p_n^{k,r-1}(B_m) \right|$. Next, we express the total absolute difference for each layer in a recursive form $\sum_{m \in \mathscr{C}_n} \left|\Delta_{n,m}^{k,r} \right| =  \varrho_n \left(\sum_{m \in \mathscr{C}_n} \left|\Delta_{n,m}^{k,r-1} \right| \right)$ with some $\varrho_n < 1$. This shows that the total absolute difference of any layer converges, which in turn indicates in iteration $k$ that the probabilities also converge. The normalization property is then established by summing the probabilities within a layer on both sides of Equation \eqref{eq:updateHeter} and taking the limit on both sides. We present the detailed proof in \S \ref{ssec:proof_lemma_hetero} of the E-Companion. \Halmos

The convergence property ensures that the probabilities in each layer converge at every iteration and the normalization property ensures the probability distributions of system states sum to one at each iteration $k$ without normalization, i.e., $\sum_{m=0}^{2^N-1} P^k\{B_m\} = 1$, for all iterations. The normalization property is essential for the following convergence property. 

\begin{theorem}[Geometric Convergence]
    \label{thm_convergence_heter}
    Under Assumption \ref{assu-phi}, the CPU algorithm converges at a geometric rate, i.e., for all $n=0,1,\ldots, N$ and $m=0, \ldots, 2^N-1$,
    \begin{equation*}
        \lim_{k\rightarrow \infty} \left|p_n^{k+1}(B_m)-p_n^{k}(B_m)\right| \leq \lim_{k\rightarrow \infty} \Phi_N^k \theta = 0
    \end{equation*}
    for some $0<\theta<\infty$. In addition, the algorithm converges to the exact hypercube solution.
\end{theorem}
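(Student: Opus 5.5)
I would mirror the strategy of Lemma~\ref{lemma:conv&sum}, but across outer iterations $k$ rather than inner updates $r$. Define the layer-wise total variation between consecutive outer iterates,
\[
D_n^k = \sum_{m\in\mathscr{C}_n}\left|p_n^{k+1}(B_m)-p_n^{k}(B_m)\right|, \qquad n=0,1,\ldots,N,
\]
and note $D_0^k=D_N^k=0$ because $p_0(B_0)=p_N(B_{2^N-1})=1$ are fixed. By Lemma~\ref{lemma:conv&sum}, the converged inner values $p_n^k(\cdot)$ satisfy the fixed-point form of \eqref{eq:updateHeter} with $\mu^{k,r-1}(n)$ replaced by $\mu^k(n)$, and $\sum_{m}p_n^k(B_m)=1$.

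The first step is to bound the rate ratios. Normalization makes $\lambda^k(n)$ and $\mu^k(n)$ convex combinations of $\lambda_m$ and $\mu_m$ over $\mathscr{C}_n$. Hence $\underline{\mu}_n\le\mu^k(n)\le\overline{\mu}_n$, and $\lambda_m=\lambda$ for $n<N$ in the loss system. This gives
\[
|\mu^{k+1}(n)-\mu^k(n)|\le \overline{\mu}_n D_n^k .
\]
A ratio such as $\mu(n)/\lambda(n-1)$ or $\lambda(n)/\mu(n+1)$ then changes by at most $\gamma$-type factors times the corresponding $D$. Here $\gamma_{n+1}=\overline{\mu}_{n+1}/\underline{\mu}_{n+1}$ arises from differencing $\lambda/\mu(n+1)$, using $|1/a-1/b|\le|a-b|/\underline{\mu}^2$.

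The second step is to take the difference of the fixed-point equations at $k+1$ and $k$ for layer $n$, sum over $m\in\mathscr{C}_n$, and apply the triangle inequality. In the summation I would use two identities: $\sum_m \lambda_{lm}=\lambda_l$ and $\sum_{m}\mu_{lm}=\mu_l$, together with the weights $1/(\lambda_m+\mu_m)\le 1/(\lambda+\underline{\mu}_n)$. The upward term involves layer $n-1$ at iteration $k+1$, which is Gauss--Seidel style. The downward term involves layer $n+1$ at iteration $k$, together with the within-layer factor $\mu^{k+1}(n)$, which is absorbed by the Lemma's contraction $\varrho_n<1$. This should yield a recursion of the form
\[
D_n^{k+1}\le \frac{\lambda}{\lambda+\underline{\mu}_n}\Big(\frac{\overline{\mu}_n}{\lambda+\underline{\mu}_{n-1}}\,c\,D_{n-1}^{k+1}+\gamma_{n+1}D_{n+1}^{k}\Big)\ \text{(plus terms in } D_n^{k}).
\]
Unrolling it downward through layers $n-1,\ldots,1$ produces exactly the nested sum $\sum_{q=2}^n\gamma_q\prod_{j=q}^n \overline{\mu}_j/(\lambda+\underline{\mu}_{j-1})$ appearing in $\Phi_n$. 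I would then prove by induction on $n$ that
\[
D_n^{k+1}\le \Phi_n \max_{j} D_j^{k}.
\]
I would also check that the $\Phi_n$ are monotone, or else replace them by $\Phi_N:=\max_n\Phi_n$ (I read $\Phi_N$ in the statement as that overall contraction constant). This gives
\[
\max_n D_n^{k}\le \Phi_N^{k}\,\theta, \qquad \theta=\max_n D_n^0\le 2 .
\]
Geometric convergence of $|p_n^{k+1}(B_m)-p_n^k(B_m)|\le D_n^k$ follows, and with it that $p_n^k$ is Cauchy.

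The third step shows the limit is exact. Pass to the limit in the iteration; all maps are continuous, and the denominators are bounded away from zero since $\lambda^k(n)=\lambda>0$ and $\mu^k(n)\ge\underline{\mu}_n>0$. The limit then satisfies \eqref{eq:p_n(B_m)} and \eqref{eq:sumtol_p_n(B_m)}, and Proposition~\ref{lemma_bnd} gives that $p(n)p_n(B_m)$ solves the hypercube balance equations. Uniqueness of the stationary distribution of the irreducible finite CTMC identifies it as the exact solution.

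The main obstacle is the second step. The nonlinear coupling through the ratios $\mu(n)/\lambda(n-1)$ and $\lambda(n)/\mu(n+1)$ must be linearized carefully so that the constants match $\Phi_n$. The same-layer term in $\mu^{k+1}(n)$ must also be absorbed without spoiling the contraction. My first attempt would write each product difference as $a'b'-ab=(a'-a)b'+a(b'-b)$, bound the $b$'s by sums that equal $1$ via normalization, and bound the rate-ratio differences by $\gamma$ times $D$.
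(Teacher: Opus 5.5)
The overall structure is the paper's. Your $D_n^k$ is its $M_{k+1,n}$. The triangle inequality with weight $1/(\lambda+\underline{\mu}_n)$, the identities $\sum_m\lambda_{lm}=\lambda$ and $\sum_m\mu_{lm}=\mu_l$, the induction on $n$ producing $\Phi_n$, and the uniqueness argument at the end all match; your continuity version of the last step is fine.

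\textbf{The gap is step two.} You flag it but do not resolve it, and the linearization you sketch gives neither $\Phi_n$ nor even a contraction. Split with $a'b'-ab=(a'-a)b'+a(b'-b)$ and use $|\mu^{k+1}(n)-\mu^k(n)|\le\overline{\mu}_nD_n$ and $|1/a-1/b|\le|a-b|/\underline{\mu}^2$. Then:
\begin{itemize}
\item The downward coefficient becomes $\frac{\lambda}{\lambda+\underline{\mu}_n}(\gamma_{n+1}+\gamma_{n+1}^2)$ instead of $\frac{\lambda}{\lambda+\underline{\mu}_n}\gamma_{n+1}$.
\item The factor $\mu(n)/\lambda(n-1)$ produces a same-iteration, same-layer term $\frac{\overline{\mu}_n}{\lambda+\underline{\mu}_n}D_n^{k+1}$. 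This factor sits in the \emph{upward} term, not the downward one as you wrote.
\end{itemize}
Lemma~\ref{lemma:conv&sum} cannot absorb that same-layer term. Its contraction concerns $p_n^{k,r}-p_n^{k,r-1}$ at a fixed outer iteration and says nothing about $p_n^{k+1}-p_n^k$. The only option is to move the term to the left-hand side, which inflates every constant. Already with homogeneous rates, where Assumption~\ref{assu-phi} holds automatically, these bounds give $D_1^{k+1}\le 2D_2^{k}$ for $n=1$. Your recursion also carries a constant $c$ and extra $D_n$ terms, so the claim that unrolling yields ``exactly'' the nested sum in $\Phi_n$ is not supported.

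\textbf{The missing idea.} $\mu^{k+1}(n)$ is not an independent perturbation: it is determined by the layer-$n$ fixed point, which can be solved in closed form. Define
\begin{itemize}
\item $u_m=\frac{1}{\lambda}\sum_l p_{n-1}^{k+1}(B_l)\lambda_{lm}$,
\item $q_l=\mu_l\,p_{n+1}^{k}(B_l)/\mu^{k}(n+1)$,
\item $v_m=\sum_l q_l\,\mu_{lm}/\mu_l$.
\end{itemize}
Both $u$ and $v$ are probability vectors on $\mathscr{C}_n$. The inner limit is then
\[
p_n^{k+1}(B_m)=\frac{s\,u_m+\lambda v_m}{\lambda+\mu_m},\qquad s=\mu^{k+1}(n)=\frac{\sum_m \mu_m v_m/(\lambda+\mu_m)}{\sum_m u_m/(\lambda+\mu_m)}.
\]
Differentiating, the variation of $s$ enters only through a factor $I-w\mathbf{1}^\top$, where $w$ is a probability vector. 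Checking the extreme directions $e_i-e_j$ gives $\ell_1$-Lipschitz constants $\overline{\mu}_n/(\lambda+\underline{\mu}_n)$ in $u$ and $\lambda/(\lambda+\underline{\mu}_n)$ in $v$. The same computation shows the tilt $p_{n+1}\mapsto q$ is $\gamma_{n+1}$-Lipschitz on the simplex. The maps $p_{n-1}\mapsto u$ and $q\mapsto v$ are row-stochastic kernels, so they are $\ell_1$-nonexpansive. Together these give
\[
D_n^{k}\le\frac{1}{\lambda+\underline{\mu}_n}\bigl(\overline{\mu}_nD_{n-1}^{k}+\gamma_{n+1}\lambda D_{n+1}^{k-1}\bigr),
\]
which is exactly the paper's recursion for $M_{k+1,n}$. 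Your induction then yields $\Phi_n$.

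The paper writes this recursion down directly, bounding each product difference by $\frac{\overline{\mu}_n}{\lambda}|\Delta^{k+1}_{n-1,l}|$ and $\frac{\lambda}{\underline{\mu}_{n+1}}|\Delta^k_{n+1,l}|$ respectively, as though the rates were frozen across iterations. So your concern about rate variation is legitimate. The fix, though, is this exact treatment of the within-layer fixed point, not a termwise product-rule expansion.
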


\textbf{Proof} (Sketch):
The proof for this theorem follows the following steps:
\begin{enumerate}[(i)]
    \item We bound the absolute difference value $\left|p_n^{k+1}(B_m)-p_n^{k}(B_m)\right|$ using Equation \eqref{eq:updateHeter}, Lemma \ref{lemma:conv&sum} and the \textit{Triangle Inequality}. Then, we bound the sum of the absolute differences for each layer $M_{k+1,n}=\sum_{l \in \mathscr{C}_n}\left|p_n^{k+1}(B_l)-p_n^{k}(B_l)\right|$, and express $M_{k+1, n}$ in recursion form $M_{k+1, n} \leq f\left(M_{k+1, n-1}, M_{k, n+1}\right)$ for some function $f$.
    \item By defining $M_k = \sup_n M_{k,n}$, i.e., the largest difference across all layers, we show by induction that $M_{k+1,n} \leq \Phi_n M_{k}$ for any layer $n$ with $\Phi_n$ as defined in Assumption \ref{assu-phi}, through the recursive function $f$. We also prove that $\Phi_n$ increases with $n$ and the largest $\Phi_N < 1$. We thus have $M_{k+1} \leq \Phi_N M_k \leq \Phi^2_N M_{k-1}\leq \Phi_N ^k M_1$, indicating that $M_k$ decreases at a geometric rate. 
    \item Since $\left|p_n^{k+1}(B_m)-p_n^{k}(B_m)\right| \leq M_{k+1, n} \leq M_{k+1}$, we conclude that $\lim _{k \rightarrow \infty}\left|p_n^{k+1}\left(B_l\right)-p_n^k\left(B_l\right)\right|<\lim _{k \rightarrow \infty} M_{k+1}<\lim _{k \rightarrow \infty} \Phi_N^k M_1=0$.
    \item  To show it converges to the exact solution, we argue that for an irreducible and positive recurrent continuous Markov process, the stationary distribution is unique. \Halmos
\end{enumerate}

A detailed proof is provided in Section \ref{ssec:proof_conv_heter} of the E-Companion. The above analysis addresses cases with heterogeneous service rates. It is important to note that the homogeneous case is a special instance. For systems with a homogeneous service rate, i.e., $\nu_i = \nu$ for all units $i$, 
Assumption \ref{assu-phi} is automatically satisfied because $\overline{\mu}_n = \underline{\mu}_n = n\nu$ for every layer $n$.
Specifically, we have: 
\begin{eqnarray*}
    \Phi_N = \frac{\lambda}{\lambda + N\nu} \left(\sum_{q=2}^{N} \prod_{j=q}^{N} \frac{ j\nu}{\lambda + (j-1)\nu} + 1 \right) 
    = \frac{1}{\prod_{j=1}^{N} (\lambda + j\nu)} \left(\prod_{j=1}^{N} (\lambda + j\nu) - \prod_{j=1}^{N} j\nu \right) < 1, 
\end{eqnarray*}
Therefore, both properties in Lemma \ref{lemma:conv&sum} hold, and thus Theorem \ref{thm_convergence_heter} applies.

\begin{corollary}[Homogeneous Setting]
    For homogeneous service rate systems (i.e., $\nu_i = \nu$ for all $i \in \{1,2,\ldots,N\}$), the following properties hold:
    \begin{enumerate}
        \item[1)] \textup{(\textbf{Normalization Property})}. The conditional probabilities $p_n^k(B_m)$ sum to one at each iteration $k$ for all layers $n$, i.e., $\forall k = 0,1,\ldots, \forall n=0,1,\ldots, N$,
            \begin{equation*}
                \sum_{m \in \mathscr{C}_n} p_n^k(B_m) = 1.
            \end{equation*}
        \item[2)] \textup{(\textbf{Geometric Convergence})}. The CPU algorithm converges at a geometric rate, i.e., for all $n=0,1,\ldots, N$ and $m=0, 1,\ldots, 2^N-1$, there exists $0<\varphi<1$ and $0<\delta<\infty$ such that
            \begin{equation*}
                \lim_{k\rightarrow \infty}\left|p_n^{k+1}(B_m)-p_n^{k}(B_m)\right|  \leq \lim_{k\rightarrow \infty} \varphi^k \delta = 0.
            \end{equation*}
    \end{enumerate}
\end{corollary}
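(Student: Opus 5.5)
The plan is to derive the corollary as a specialization of Lemma~\ref{lemma:conv&sum} and Theorem~\ref{thm_convergence_heter}. The first step is to confirm that Assumption~\ref{assu-phi} holds automatically. When $\nu_i=\nu$ for every $i$, each state in layer $n$ has $\mu_m=\sum_i b_{m,i}\nu=n\nu$. Hence $\overline{\mu}_n=\underline{\mu}_n=n\nu$ and $\gamma_n=1$ for all $n$.

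The next step is to evaluate $\Phi_n$ in closed form. I would prove by induction on $n$ that
\begin{equation*}
\Phi_n=1-\prod_{j=1}^{n}\frac{j\nu}{\lambda+j\nu}.
\end{equation*}
For $n=1$ this reads $\Phi_1=\lambda/(\lambda+\nu)$, which is correct. For the inductive step, write
\begin{equation*}
S_n=\sum_{q=2}^{n}\prod_{j=q}^{n}\frac{j\nu}{\lambda+(j-1)\nu}+1 .
\end{equation*}
This quantity satisfies
\begin{equation*}
S_n=\frac{n\nu}{\lambda+(n-1)\nu}\,S_{n-1}+1 ,
\end{equation*}
and telescoping then gives $\Phi_n=\frac{\lambda}{\lambda+n\nu}S_n$. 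Plugging in the inductive form of $\Phi_{n-1}$ should reproduce the displayed identity; this is the same computation the text does for $\Phi_N$. Since the subtracted product is strictly positive for $\lambda,\nu>0$, we get $0<\Phi_n<1$ for every $n$, and $\Phi_n$ is increasing in $n$.

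For part~1), Lemma~\ref{lemma:conv&sum}(2) only provides normalization in the limit $r\to\infty$, so I need exactness at each $k$. In the homogeneous case, $\mu^{k,r}(n)=n\nu\sum_m p_n^{k,r}(B_m)$, and on the converged value this equals $n\nu$ once normalization holds. I would therefore sum \eqref{eq:updateHeter} over $m\in\mathscr{C}_n$. This uses two facts. First, $\lambda_m+\mu_m$ is constant on layer $n$: it equals $\lambda+n\nu$ for $n<N$. Second, $\sum_m\lambda_{lm}=\lambda_l$ and $\sum_{m}\mu_{lm}=\mu_l=(n+1)\nu$. With these, the sum becomes a scalar fixed-point equation for $s_n=\sum_m p_n^k(B_m)$:
\begin{equation*}
s_n(\lambda+n\nu)=n\nu\, s_n\cdot\frac{\lambda^k(n-1)}{\lambda^k(n-1)}\,s_{n-1}+\lambda^{k-1}(n)\,s_{n+1}^{k-1}\frac{(n+1)\nu}{\mu^{k-1}(n+1)} .
\end{equation*}
Using induction on $k$ (the initialization $1/\binom{N}{n}$ sums to one) and on $n$ (with $s_0=1$), the equation reduces to $s_n(\lambda+n\nu)=n\nu s_n+\lambda^{k-1}(n)$. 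I expect $\lambda^{k-1}(n)$ to be the normalized quantity that makes $s_n=1$ the consistent fixed point, and then the limit in Lemma~\ref{lemma:conv&sum} pins it down exactly.

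The main obstacle is this bookkeeping: $\lambda^{k-1}(n)$ is not identically $\lambda$, because some states block some nodes' arrivals. The second term therefore has to be reorganized as $\lambda^{k-1}(n)$ times a normalized sum. If an exact identity fails, I would fall back on the limiting statement of Lemma~\ref{lemma:conv&sum}, interpreting $p_n^k$ as the converged inner-loop value.

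For part~2), I would apply Theorem~\ref{thm_convergence_heter} directly, with $\varphi=\Phi_N=1-\prod_{j=1}^N\frac{j\nu}{\lambda+j\nu}\in(0,1)$ and $\delta=\theta$; the theorem supplies $\theta$ as $M_1$.
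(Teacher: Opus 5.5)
Your proposal is correct and is essentially the paper's argument. With $\overline{\mu}_n=\underline{\mu}_n=n\nu$ you get $\gamma_n=1$ and $\Phi_N=1-\prod_{j=1}^{N}\frac{j\nu}{\lambda+j\nu}<1$, so Assumption~\ref{assu-phi} holds automatically. Part 1) is then Lemma~\ref{lemma:conv&sum}(2), since $p_n^k$ is by definition the converged inner-loop value; your fallback is exactly what the paper does. Part 2) is Theorem~\ref{thm_convergence_heter} with $\varphi=\Phi_N$. Your recursion $\Phi_n=(\lambda+n\nu\Phi_{n-1})/(\lambda+n\nu)$ is a tidy way to verify the closed form, which the paper only states, and it also gives the monotonicity in $n$ that the theorem uses.

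The one thing to fix is the ``obstacle'' in part 1), which does not exist. Every arrival is dispatched to the most preferred \emph{available} unit. So for every state with fewer than $N$ busy units, $\lambda_m=\sum_l\lambda_{ml}=\lambda$; only individual $\lambda_{ml}$ can vanish. You already used $\lambda_m=\lambda$ when writing $\lambda_m+\mu_m=\lambda+n\nu$. Hence $\lambda^{k-1}(n)=\lambda\sum_{m\in\mathscr{C}_n}p_n^{k-1}(B_m)=\lambda$ by induction on $k$. Start that induction at $k=1$, as you do: the dummy $p_n^0\equiv 0$ of Algorithm~\ref{algo-BnDHeter} is not normalized, so the statement's $k=0$ should be read as $k\ge 1$.

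Now sum \eqref{eq:updateHeter} over $m\in\mathscr{C}_n$. Both ratios cancel identically against \eqref{eq:lambdaHeter} and \eqref{eq:muHeter}; the extra factor $s_{n-1}$ in your scalar equation should not be there. Writing $s_n^{k,r}=\sum_{m\in\mathscr{C}_n}p_n^{k,r}(B_m)$, you obtain
\[
(\lambda+n\nu)\,s_n^{k,r}=n\nu\,s_n^{k,r-1}+\lambda,
\qquad\text{i.e.}\qquad
s_n^{k,r}-1=\frac{n\nu}{\lambda+n\nu}\bigl(s_n^{k,r-1}-1\bigr),
\]
with $s_n^{k,0}=s_n^{k-1}=1$.

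So your direct route closes, and it gives slightly more than citing the lemma. In the homogeneous case each layer is exactly normalized at every inner update $r$. The same summation in the paper's proof of Lemma~\ref{lemma:conv&sum} only yields normalization in the limit $r\to\infty$, because there $\sum_m\mu_m p_n^{k,r}(B_m)$ is not a multiple of $s_n^{k,r}$.
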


\subsection{Complexity Analysis} \label{ssec_seqComplexity}
In this section, we analyze the space and time computational complexities of Algorithm~\ref{algo-BnDHeter}, where the \textit{space computational complexity} (SCC) measures the amount of memory required by the algorithm, while the \textit{time computational complexity} (TCC) represents the number of computational steps needed for completion. 

We first examine the SCC. In Algorithm~\ref{algo-BnDHeter}, three types of data need to be stored, i.e., the transition coefficients $\lambda_{lm}$ and $\mu_{lm}$, the probability distribution $p_{n}^{k,r}(B_m)$, and the birth-death process variables $\lambda^k(n)$, $\mu^{k,r}(n)$, and $p(n)$. There are $2^N$ system states, and each state has $N$ one-step reachable states that differ by a Hamming distance of one. Accordingly, the SCCs of the transition coefficients and the probability distribution are $O(2^N N)$ and $O(2^N)$, respectively. The birth-death process variables are scalar quantities for each layer $n$, resulting in an SCC of $O(N)$. Therefore, the overall SCC of Algorithm~\ref{algo-BnDHeter} is $O(2^N N)$.

Next, we analyze the TCC. In the initialization step (step 2) of Algorithm \ref{algo-BnDHeter}, generating transition coefficients involves aggregating contributions from all $J$ demand nodes across $2^N$ states, each with $N$ reachable neighbors. This yields a TCC of $O(2^N N J)$. Initializing the probability distribution requires $O(2^N)$ operations. In the iteration steps, we focus on updating probabilities and transition parameters (steps 10, 11, and 13). In step 10, each state $m \in \mathscr{C}_n$ has $N$ reachable states in layers $n-1$ and $n+1$. Based on Equation \eqref{eq:updateHeter}, updating $p_{n}^{k,r}(B_m)$ requires $O(N)$ operations. Since layer $n$ contains $\binom{N}{n}$ states, step 10 has a total TCC of $O\big(\binom{N}{n} N\big)$. In steps 11 and 13, by Equations \eqref{eq:muHeter} and \eqref{eq:lambdaHeter}, each step contributes $O\big(\binom{N}{n}\big)$. Summing across all layers gives a total TCC of $\sum_{n=1}^{N-1} O\big(\binom{N}{n} N\big) = O(2^N N)$ for the for-loop in step 6. Additionally, the assignment, addition, and comparison operations on $p_{n}^{k,r}(B_m)$ in steps 3, 5, and 8 each incur a TCC of $O(2^N)$. In the post-processing steps, step 16 requires $O(N^2)$ operations following Equation \eqref{eq:bnd_stationary}, while step 17 adds another $O(2^N)$. Consequently, the overall TCC of Algorithm \ref{algo-BnDHeter} is $O(2^N N J)$.


From the above analysis, the generation and storage of transition coefficients emerge as the primary computational bottleneck. In our experiments, we found that for systems with $N > 25$, the number of transition coefficients becomes too large to be stored on a personal computer. Moreover, as shown in Tables \ref{tab_homo_detail} and \ref{tab_greenville_homo_detail} in Section \ref{sec_extended_numerical}, coefficient generation accounts for more than 80\% of the total computation time. Therefore, we discuss potential strategies to mitigate these issues in Section \ref{sec_coef}.

 \color{black}

\section{Parallel Computing Solution Algorithm}
\label{sec_parallel} 

In this section, we develop a parallel computing solution algorithm to further improve its computational efficiency. Section \ref{ssec_parallel_algo} presents the overall framework of the parallel CPU algorithm. Section \ref{sec_coef} introduces the coefficient generation procedure, which serves as a key component of the parallel framework. Section \ref{ssec_parallel_complexity} analyzes the computational complexity of the proposed parallel algorithm.

\subsection{Parallel CPU Algorithm} \label{ssec_parallel_algo}
The parallel algorithm leverages the convergence property along with the following two key properties:
\begin{enumerate}
    \item[(1)] \textit{Non-dependency within layer:} The computation of the conditional probabilities $p^k_n(B_m)$ in each layer~$n$ does \textit{not} depend on other states in the same layer;
    \item[(2)] \textit{Local-dependency between layers:} The computation of $p^k_n(B_m)$ at the $k$-th iteration of layer $n$ \textit{only} depends on the values of layer $n-1$ at the $k$-th iteration and the values of layer $n+1$ at the $(k-1)$-th iteration. 
\end{enumerate}

The first property specifies the non-dependency between states in the same layer, enabling the algorithm at each layer, within each iteration, to be computed in a \textit{divide-and-conquer} manner \citep{cormen2022introduction}. The second property means that we only need to compute and store the necessary transition rates of the layers before and after the current layer, which significantly reduces the computational and storage requirements.

\begin{figure}[t!]
    \begin{center}
    \includegraphics[width=17cm]{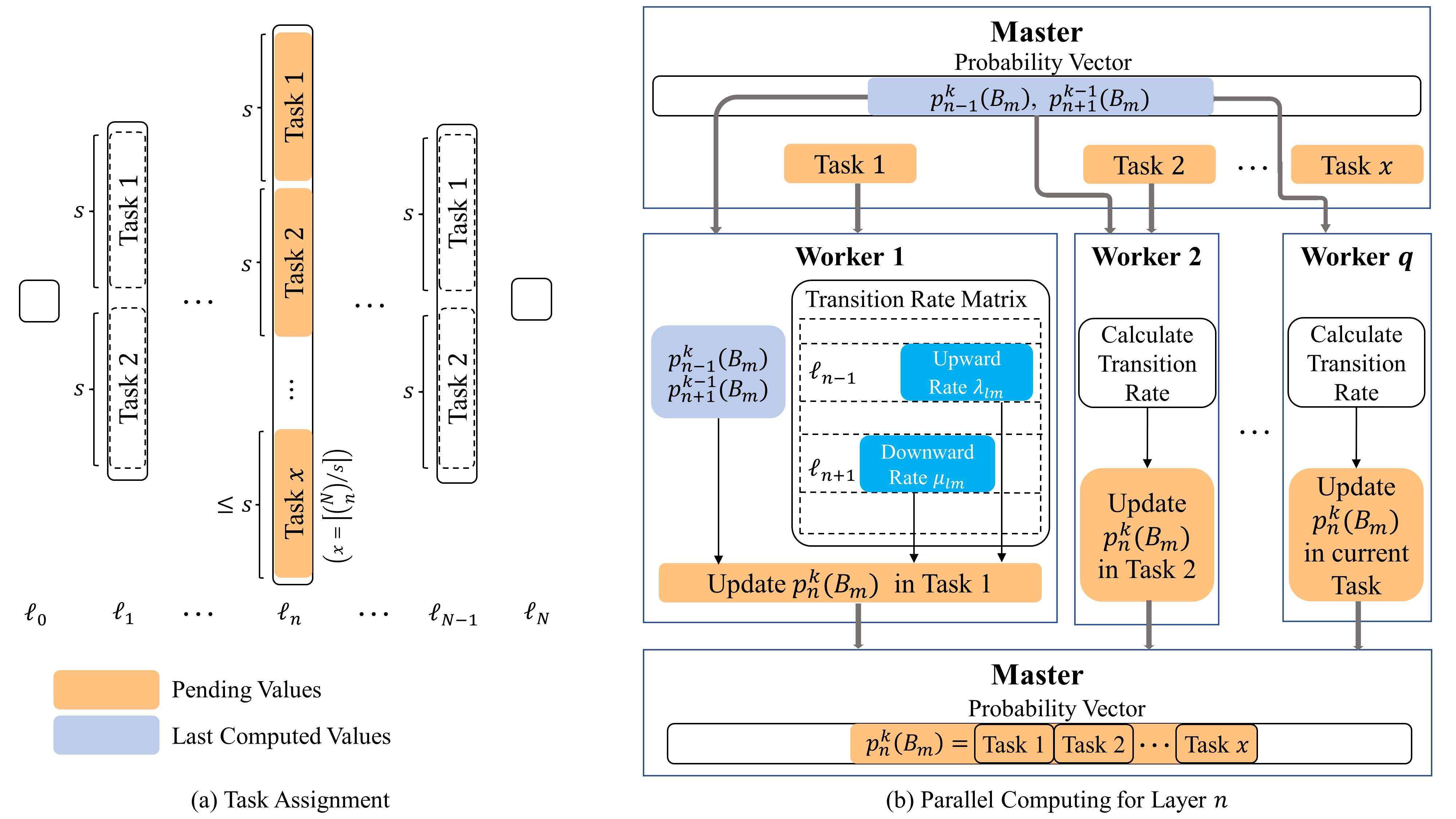}
    \end{center}
    \caption{Task Assignment and Parallel Computing.}
    {\footnotesize 
    \linespread{1.0}\selectfont 
    \noindent\textit{Note.} In (a), we partition the states in each layer into different tasks, each with a size equal to or less than $s$. In (b), we show how the tasks are computed. The master first assigns the tasks and necessary information to the workers. Each worker then generates the transition rates and updates the probabilities within the assigned tasks, sending the results back to the master. The master successively assigns pending tasks to idle workers and collects the outcomes. Once all tasks within a layer are completed, the master moves to the next layer.
    \par}
    \label{fig:parellel}
\end{figure}

Our parallel algorithm is based on the \textit{Master/Worker structure}, as illustrated in Figure \ref{fig:parellel}. The master processor oversees the process, assigning tasks to workers, gathering results, and deciding when to proceed to the next iteration. The workers receive tasks, calculate probabilities, and return results to the master. We consider a system to be solved using $q$ workers. In Figure \ref{fig:parellel}(a), we depict the partitioning of states within each layer. The term \textit{task} refers to updating a subset of conditional probabilities $p^k_n(B_m)$ within a layer. To balance the workload among workers, we partition states in each layer $n$ into $x$ batched tasks, where $x = \left\lceil \binom{N}{n} / s\right\rceil$ and $s$ denotes the batch size. Figure \ref{fig:parellel}(b) shows the coordination between the master and $q$ workers. In each iteration, the master dynamically assigns tasks to available workers. When tasks outnumber available workers or vice versa, the surplus tasks wait, or extra workers remain idle. Upon receiving a task, each worker computes the upward and downward transition rates ($\lambda_{lm}$ and $\mu_{lm}$) for the assigned states using the parallel coefficient generation algorithm (i.e., Algorithm \ref{alg:transition_generation}), which is described in detail in Section \ref{sec_coef}. We then use this information to update $p_n^{k,r}(B_m)$ using Equation \eqref{eq:updateHeter}. 
After updating all states in the task, the worker communicates the outcomes to the master and becomes available again. Once the master collects all updated probabilities within the current layer, the algorithm progresses to the next layer. We present the detailed algorithm in Algorithm \ref{algo-parallel}.


\begin{algorithm}[t] 
  \caption{Parallel Conditional Probability Update (Parallel CPU)}
  \label{algo-parallel}
  {\fontsize{10}{16}\selectfont
  \begin{algorithmic}[1]
  \State {\textbf{Input:}} Arrival rate $\lambda$, service rates $\nu_i$, demand fractions $f_j$,  preference lists $\zeta_{j}(h)$, number of workers $q$, batch size $s$, and tolerance $\epsilon$.
  \State  \textbf{Initialize:} Set step $k=1$, $p_n^0(B_m)=0, p_n^1(B_m)={1}/{{N \choose n}}$ for all $m \in \mathscr{C}_{n}$ and $n = 0,\ldots,N$. 
  \While{$\max_{n,m} (|p_n^{k}(B_m)-p_n^{k-1}(B_m)|)>\epsilon$}
  \State $k = k + 1$.
  \For{$n = 1$ to $N-1$}
    \State \textbf{(Master)} Partition all states into batches of size $s$, resulting in $x$ batches in total. Each batch is then assigned to an available worker until all states are processed. 
    \State \textbf{(Worker)} For each worker, generate transition rates $\lambda_{lm}$ and $\mu_{lm}$ using Algorithm \ref{alg:transition_generation} for states relevant to the current task. Update the probabilities $p_n^{k, r}(B_m)$ \textcolor{black}{and $\mu^{k,r}(n)$} using Equations ~\eqref{eq:updateHeter} and \eqref{eq:muHeter} until convergence. Then, send the updated results to the Master.
    \State \textbf{(Master)} Collect all results in the current layer $n$. 
  \EndFor
  \EndWhile
  \State Obtain $\lambda(n)$ and $\mu(n)$ using Equations ~\eqref{eq:lambda} and ~\eqref{eq:mu}, for $n=0,\ldots, N$. 
  \State Calculate $p(n)$ using Equation ~\eqref{eq:bnd_stationary}.
  \State \textbf{Output:} {$P\{B_m\}=p(w(B_m))\cdot p_{w(B_m)}^k(B_m)$}, for $m=0, \ldots, 2^N-1$.
  \end{algorithmic}
  }
\end{algorithm}

To assess the performance improvement achieved by the parallel solution, we estimate the proportion of the algorithm executable within a parallel environment using Amdahl's law \citep{amdahl1967validity}, which has been used to study parallel ranking and selection problems \citep{luo2015fully}.
\begin{lemma}
 \label{lemma-Amdahl}
  \textup{(\citealt{amdahl1967validity})}. Let $P$ be the proportion of the program that can be parallelized, and $\beta$ be the computational time required without parallelization. If the computation time $T$ in the parallel framework with $q$ workers fits the model
  \begin{equation*} 
  T = \beta \left(1-P + \frac{P}{q}\right) + \epsilon  
  = \underbrace{(\beta - \beta P)}_{:=c_0} + \underbrace{\beta P}_{:=c_1} \cdot \frac{1}{q} + \tilde{\epsilon}, \label{eqn-amdahl}
\end{equation*}
  where $\tilde{\epsilon}$ is the stochastic error term, then the degree of parallelism is $P = \frac{1}{1+ {c_0}/{c_1}}$.
\end{lemma}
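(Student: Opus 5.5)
The plan is to treat Amdahl's law as a statement about a linear regression in the regressor $1/q$. The derivation is then only a reparametrization plus solving for $P$ in terms of the two coefficients.

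\emph{Step 1 (model derivation).} Suppose a fraction $P$ of the sequential work $\beta$ can be divided evenly among $q$ workers, and the remaining fraction $1-P$ must be executed serially. The serial part costs $\beta(1-P)$ regardless of $q$. The parallel part, split perfectly across $q$ workers, costs $\beta P/q$. Adding a zero-mean disturbance for communication and scheduling noise gives $T=\beta(1-P+P/q)+\tilde\epsilon$. Expanding this yields the displayed affine form in $1/q$:
\[
T=c_0+c_1\cdot\frac{1}{q}+\tilde\epsilon,
\qquad c_0:=\beta-\beta P,
\qquad c_1:=\beta P .
\]

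\emph{Step 2 (identification).} The regression intercept and slope uniquely determine $(\beta,P)$ whenever $c_1>0$, that is, whenever $\beta>0$ and $P>0$. Adding the two coefficients gives $c_0+c_1=\beta$. Hence
\[
P=\frac{c_1}{\beta}=\frac{c_1}{c_0+c_1}=\frac{1}{1+c_0/c_1}.
\]
This is the claimed expression. As a sanity check, $c_0\ge 0$ and $c_1>0$ force $P\in(0,1]$, consistent with $P$ being a proportion.

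\emph{Step 3 (estimation remark).} In practice $c_0$ and $c_1$ are unknown. One records run times $T$ for several values of $q$ and estimates them by ordinary least squares of $T$ on $1/q$, then plugs the estimates into the formula above. This plug-in is consistent for $P$ because $(c_0,c_1)\mapsto 1/(1+c_0/c_1)$ is continuous away from $c_1=0$.

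\emph{Main obstacle.} There is no real mathematical difficulty here. The only point needing care is the modeling assumption behind Step 1: that the parallelizable work divides perfectly and that overhead is absorbed into the noise term. This is an assumption of the lemma, so the proof reduces to the algebra in Step 2.
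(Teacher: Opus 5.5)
Your proposal is correct and takes essentially the same route as the paper: the paper gives no separate proof (it simply cites Amdahl), and the formula follows from your observation that $c_0+c_1=\beta$, so $P=c_1/\beta=c_1/(c_0+c_1)=1/(1+c_0/c_1)$ whenever $c_1>0$. Your Step 3 on plugging least-squares estimates into this formula matches how the paper actually uses the lemma in its numerical section, but it is not needed to prove the statement itself.
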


Amdahl's law states that the speedup of parallelism $S = \frac{\beta}{T}$ can be expressed as $S = \frac{1}{1-P + {P}/{q}}$. 
In Section~\ref{sec_numerical}, we show that our proposed parallel framework achieves a high degree of parallelization.

\subsection{Coefficient Generation} \label{sec_coef}
The generation of transition coefficients $\lambda_{lm}$ and $\mu_{lm}$ are key operations performed by each worker in step 7 of Algorithm \ref{algo-parallel}. However, the TOUR algorithm used in Algorithm \ref{algo-BnDHeter} is unsuitable for parallel implementation, as it requires traversing the entire hypercube and precomputing all coefficients in advance. To address this limitation, we propose a new algorithm that enables fast, on-demand coefficient generation and storage for any given state, making it adaptable for parallel computing. This approach is particularly beneficial for large-scale systems where pre-storing all coefficients is infeasible, as it allows each worker to compute the required coefficients independently.

Our approach relies on traversing the preference list from most to least preferred for each demand node~$j$, stopping once an available unit is found. For states with fewer busy units, this search is expected to be very fast, while it may take longer for states with more busy units. In our solution approach, we group states based on the number of busy units. Additionally, we store non-zero coefficients using a coordinate list (COO) format for sparse matrices, which contains a list of  (row, column, value) tuples. This format can be readily applied for efficient matrix manipulation and is significantly faster than point updates or standard matrix manipulation. The computation can be executed independently by each worker in conjunction with our parallel computing framework.

To introduce our approach, we define \ins{$\mathscr{M}$ as the set of states included in a given task and} $\bar{b}_{m,i}$ as the logical negation of $b_{m,i}$. We then define the following operation: 
\begin{equation*}
  \begin{aligned}
    B_m \cap \bar{b}_{m,i} = \{b_{m,N}, \ldots, \bar{b}_{m,i}, \ldots, b_{m,1}\}
  \end{aligned},
\end{equation*}
where $B_m = \{b_{m,N}, \ldots, b_{m,1}\}$.
Moreover, for each state $B_m$, its numerical index $y(B_m)$ is defined as:
\begin{equation*}
    y(B_m) = b_{m,N} \cdot 2^{N-1} + \cdots + b_{m,2} \cdot 2^1 + b_{m,1}.
\end{equation*}

\begin{algorithm}[t]
  \caption{Coefficient Generation for Parallel CPU} \label{alg:transition_generation}
  {\fontsize{10}{16}\selectfont
  \begin{algorithmic}[1]
  \State {\textbf{Input:}} Arrival rate $\lambda$, service rates $\nu_i$, fraction of region-wide demand $f_j$, preference list $\zeta_j(h)$, state set $\mathscr{M}$.
  \State {\textbf{Initialize:}} 
  Initialize the upward and downward transition rate sets as empty sets: $\mathscr{L} = \mathscr{S} = \emptyset$.
  \For {$m \in \mathscr{M}$}
    \State Determine the set of busy units $\mathscr{U}(B_m)$ implied by the state $B_m$.
    \For {$l$ not in $\mathscr{U}(B_m)$}
    \State Add $\left(y(B_m \cap \bar{b}_{m,l}), y(B_m), \nu_{l}\right)$ to $\mathscr{S}$.
    \EndFor
    \For {$j = 1$ to $J$}
      \State Record all elements from $\zeta_{j}(h)$ up to the first element absent in $\mathscr{U}(B_m)$ and denote this set as $\zeta'_{j}$.
      \For {$l$ in $\zeta'_{j}$}
        \If{$\left(y(B_m \cap \bar{b}_{m,l}), y(B_m)\right)$ not in $\mathscr{L}$} 
            \State Add $\left(y(B_m \cap \bar{b}_{m,l}), y(B_m), \lambda_{lm} = f_j \lambda\right)$ to $\mathscr{L}$.
        \Else
            \State Update $\left(y(B_m \cap \bar{b}_{m,l}), y(B_m), \lambda_{lm}\right)$ with $\left(y(B_m \cap \bar{b}_{m,l}), y(B_m),  \lambda_{lm} + f_j\lambda\right)$.
        \EndIf
      \EndFor
    \EndFor
  \EndFor
  \State \textbf{Output:} Transition rates: $\mathscr{L}$ (upward) and $\mathscr{S}$ (downward).
  \end{algorithmic}
  }
\end{algorithm}

Our coefficient generation procedure is presented in Algorithm \ref{alg:transition_generation}. At each state, the algorithm computes the transition rates from all other states to the current one. We use $\mathscr{U}(B_m) = \{ i : b_{m,i} = 1, b_{m,i} \in B_m \}$ to denote the set of busy units in state $B_m$. The algorithm stores each transition rate as a three-element tuple (row, column, value) in the sets $\mathscr{L}$ and $\mathscr{S}$, which correspond to upward and downward transitions, respectively. The row and column identify the matrix position of the rate value. We note that a key advantage of our algorithm is its ability to store these tuples regardless of their order, which can be created incrementally. This suggests that our algorithm is more memory-efficient. When the number of demand nodes is large, memory efficiency becomes the primary factor determining the speed of generating and storing transition rates. 

\subsection{Complexity Analysis} \label{ssec_parallel_complexity}
In this section, we analyze the space and time computational complexities (i.e., SCC and TCC) of Algorithm \ref{algo-parallel}. Compared with Algorithm \ref{algo-BnDHeter}, the parallel CPU algorithm achieves reductions in both SCC and TCC. 

We begin with the SCC. Three categories of data need to be stored in Algorithm \ref{algo-parallel}: the probability distribution $p_n^{k}(B_m)$, the layer-level variables $\lambda(n)$ and $\mu(n)$, and the transition coefficients $\lambda_{lm}$ and $\mu_{lm}$. As shown in Section \ref{ssec_seqComplexity}, the SCCs for the first two components are $O(2^N)$ and $O(N)$, respectively. For the transition coefficients, Algorithm \ref{algo-parallel} eliminates the need to precompute and store all coefficients by generating them on the fly during probability updates. Since at most $qs$ states are processed concurrently across all processors, and each state has $N$ one-step reachable neighbors, the SCC of the transition coefficients is $O(N)$. Therefore, the overall SCC of Algorithm \ref{algo-parallel} remains $O(2^N)$.


We next analyze the TCC. The time complexity of a parallel algorithm can be described using the \textit{work–span model}, where the \textit{work} denotes the total number of primitive operations, and the \textit{span} represents the longest chain of sequentially dependent operations \citep{blelloch1996programming}. For Algorithm \ref{algo-parallel}, the total work equals the TCC of Algorithm \ref{algo-BnDHeter}. Regarding the span, the while-loop in step 3, the for-loop in step 5, and the post-processing steps 11-13 are executed sequentially. As analyzed in Section \ref{ssec_seqComplexity}, the TCCs of steps 3, 11, and 13 are each $O(2^N)$, while step 12 requires $O(N^2)$. The for-loop in step 5 involves $\Theta(N)$ iterations, within which step 7 requires each worker to perform $O(NJ)$ operations to generate transition coefficients, $O(N)$ operations to update $p^{k,r}_n(B_m)$ and $\mu^{k,r}(n)$, and $O(N)$ operations to check convergence. Consequently, the total TCC of the for-loop is $O(N^2J)$. Therefore, the overall span of Algorithm \ref{algo-parallel} is $O(2^N)$.


Notably, Algorithm \ref{algo-parallel} achieves improvements in both storage and computational efficiency compared with Algorithm \ref{algo-BnDHeter}. In terms of storage, the SCC decreases from $O(2^N N)$ to $O(2^N)$. In terms of computation time, assuming each primitive operation takes one time unit, Brent’s theorem \citep{padua2011encyclopedia} gives the total execution time of Algorithm \ref{algo-parallel} as $O\left({2^NNJ}/{q} + 2^N\right)$, where $q$ is the number of workers. We recognize that the exponential term cannot be eliminated, as the solution itself has size $2^N$. However, the parallel algorithm avoids storing all quantities simultaneously by processing them on demand, and exploits parallel execution across independent tasks. As a result, it substantially reduces memory requirements and computation time, making the solution of larger problems possible. 



\color{black}

\section{Numerical Results}
\label{sec_numerical}
In our numerical experiments, we first describe the datasets used to evaluate the proposed algorithms in Section~\ref{sec_num_dataset}. The efficiency and accuracy of the CPU algorithm are examined in Section~\ref{sec_num_time&accu}, followed by a comparison with discrete-event simulation in Section~\ref{sec-num_ComparisonDES}. Section~\ref{sec_distributed} evaluates the performance of the Parallel CPU algorithm, and Section~\ref{ssec_general_rate} investigates the robustness of the CPU algorithm under varying service time distributions. The code for this study is available at \url{https://github.com/StarYX96/SpatialQueue}. 


\subsection{Datasets} \label{sec_num_dataset}
We evaluate the performance of the proposed algorithms using data from two emergency service systems. The first dataset, collected from St. Paul, Minnesota, contains 30,911 incidents handled by nine units across 71 demand nodes in 2014. The second dataset, from Greenville County, South Carolina, originally reported by \cite{burwell1993modeling}, includes 10,233 emergency calls across 99 demand nodes in 1980, served by six units. The St. Paul dataset is used as an example to illustrate the parameter estimation procedures for both the CPU and parallel CPU algorithms. 

The St. Paul dataset was collected in collaboration with the St. Paul Fire Department, documenting 30,911 medical incidents that occurred throughout the year. The dataset includes detailed records of medical call dates, times, and locations as recorded by the fire department. In St. Paul, nine fire-medic units are responsible for responding to both fire and medical emergencies. To simplify our analysis, we focus solely on medical incidents. \textcolor{black}{We} vary the number of units to assess the robustness and accuracy of our algorithm.


St. Paul is divided into 71 census tracts, and we designate the center of each tract as a demand node in our analysis. To calculate travel distances from each station to these demand nodes, we used the Google Maps API, which provides the shortest route. The Department data indicated that the average turnout time was 1.75 minutes. To estimate the travel time $T(d)$ from a given distance $d$, we applied a robust estimation method as proposed by \cite{budge2010empirical}. The travel time model is expressed as follows: 
$$
\begin{aligned}
T(d) & =m(d)\cdot e^{c(d) \tau}, \\
m(d) & = \begin{cases}2 \sqrt{d / a}, & d \leq 2 d_c, \\
v_c / a+d / v_c, & d>2 d_c,\end{cases} \\
c(d) & =\frac{\sqrt{b_0\left(b_2+1\right)+b_1\left(b_2+1\right) m(d)+b_2 m(d)^2}}{m(d)}.
\end{aligned}
$$
In this model, $m(d)$ and $c(d)$ represent the median and coefficient of variation of the travel time distribution, respectively, while $\tau$ follows a centered $t$ distribution. To estimate the model parameters, we used a sample of 100 emergency responses and applied maximum likelihood estimation (MLE), achieving an $R^2$ of 0.827. The estimated parameter values are presented in Table \ref{tab_parameter}. Since $T(d)$ follows a log-normal distribution, the mean travel time $\bar{T}$ is calculated as: 
$$\bar{T}=m(d)\cdot e^{c(d)^2 / 2}.$$ 

The service time refers to the period from when a unit receives a call to when it becomes available again, including travel time, on-scene time, and, when applicable, time spent transporting the patient to and at the hospital. The dataset records the exact duration of each incident, measured from call receipt to service completion. We use the average of these durations to compute the service rate. The mean travel time $\bar{T}$ is not used in this calculation but is applied when evaluating response time and mean response time, which are key performance measures in emergency service systems.

\begin{table}[!t]
\caption{Estimated Values for Various Parameters}
\small 
  \centering
  \renewcommand\arraystretch{1.2}
    \begin{tabularx}{\textwidth}{cc*4{>{\centering\arraybackslash}X}}
    \hline
       $a$ (\text{acceleration}) &   $v_c$ (\text{cruising speed})    & $d_c$&   $b_0$ &   $b_1$    & $b_2$\\
    \hline
    17.33 mi/hr/min & 28.54 mi/hr & 0.39 mi & 1.551 & 0.8738 &  -0.0709\\
    \hline
    \end{tabularx}%
  \label{tab_parameter}%
  \vspace{-0.1cm}
\end{table}%

\ins{In summary, the average call rate for the St. Paul, Minnesota, setting was estimated at 3.53 calls per hour, with an average service time of 34.15 minutes.} We obtain the Greenville County, South Carolina data from \cite{burwell1993modeling} and convert it to the same format as the summary statistics used for St. Paul. The results show an average call rate of 1.5 calls per hour and an average service time of 38.0 minutes.

\subsection{Computation Time and Accuracy}
\label{sec_num_time&accu}
We examine the total run time, including coefficient generation and computation time. Larson's (1974) original alternating hyperplane method encounters difficulties with heterogeneous cases due to normalization issues, which prevent convergence. Therefore, we compare Algorithm \ref{algo-BnDHeter} with a method that solves the problem using a sparse solver, \ins{\texttt{spsolve}}.
\ins{This is an efficient Python package designed for problems with sparse matrix structures. 
For both approaches, the transition rates are generated using the TOUR algorithm, and this generation time is excluded from the run time comparison. All experiments were performed on a machine with a 3.40 GHz AMD Ryzen 16-Core processor and 64 GB of RAM.}
We report results for the two systems in Figure \ref{fig_heter_combined}, as the sparse solver solution times are computationally prohibitive for systems with a greater number of units. 

We perform this comparison across varying numbers of units and system utilizations $(\rho = \frac{\lambda}{\sum_i \nu_i})$, where $\nu_i$ is the service rate of each unit, calculated as the reciprocal of the average service time from the data, with a uniform random adjustment between \ins{$-0.2$} and $0.2$ added to account for unit heterogeneity, as individual unit identities were not recorded in the dataset. We achieve different values of $\rho$ by keeping the arrival rates and number of demand nodes constant while adjusting the service rates to simulate various scenarios. 
To compare the performance of different methods, we define the maximum percentage relative error (MPRE) of a specific performance measurement $\mathcal{M}$ as:
\begin{equation}\label{eqn-MPRE}
    \text{MPRE} = \max _m\left|\frac{\mathcal{M}_m-\hat{\mathcal{M}}_m}{\mathcal{M}_m}\right| \times 100 \%,
\end{equation}
where $\mathcal{M}_m$ and $\hat{\mathcal{M}}_m$ are performance measurements calculated by different methods, and the index $m$ refers to the components of the performance vector. 
Figure \ref{fig_heter_combined} shows both the percentage of total time saved by our algorithm and the MPRE of the steady-state probability distributions within the emergency systems in St. Paul, MN, and Greenville County, SC, respectively. 
We focus on systems with more than 11 units, as the computation times for smaller systems are less than one second.

From Figure \ref{fig_heter_combined}, we observe that the advantage of our algorithm increases as the number of units increases and significantly outperforms the sparse solver. Our algorithm is more than 1000 times faster than the sparse solver for larger problems, with the maximum relative error remaining below 0.5\%. \ins{We note that the proposed method yields exact solutions. The reported percentage errors arise from the convergence threshold in the iterative process and can be further reduced by choosing a smaller threshold.}
We provide the exact computation time of our method and the sparse solver in Table \ref{tab_EC_heter_detail} in the E-Companion. As an example, for $N=15$, the sparse solver takes about 2.4 hours while our solution takes only 6.2 seconds.  

\begin{figure}
    \begin{center}
    \includegraphics[width=1\linewidth]{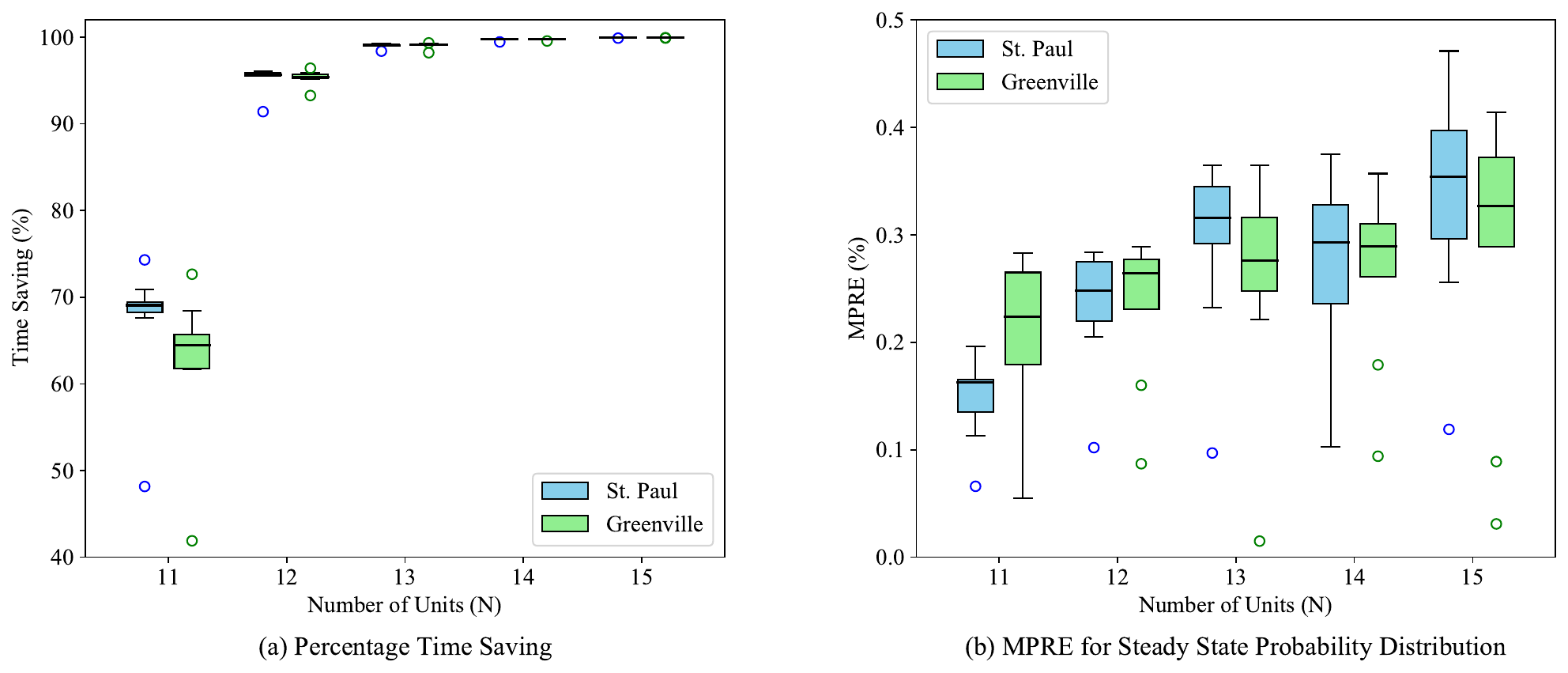}
    \end{center}
    \caption{Percentage Time Savings and Errors for Heterogeneous Cases.}
    {\footnotesize 
    \linespread{1.0}\selectfont 
    \noindent\textit{Notes.} The percentage time savings is calculated as $\frac{T-\hat{T}}{T} \times 100\%$, where $T$ and $\hat{T}$ represent the running times of the sparse solver and our method, respectively. The coefficient generation time is excluded. The MPRE $(\%)$ is computed using Equation \eqref{eqn-MPRE}, where $\mathcal{M}_m$ and $\hat{\mathcal{M}}_m$ correspond to $P\{B_m\}$ and $\hat{P}\{B_m\}$, the steady-state distributions from the sparse solver and our method, respectively. For each box, we compute 9 scenarios by varying the system utilization $\rho$ from 0.1 to 0.9.
    \par}
    \label{fig_heter_combined}
\end{figure}

\begin{figure}
    \begin{center}
    \includegraphics[width=1\linewidth]{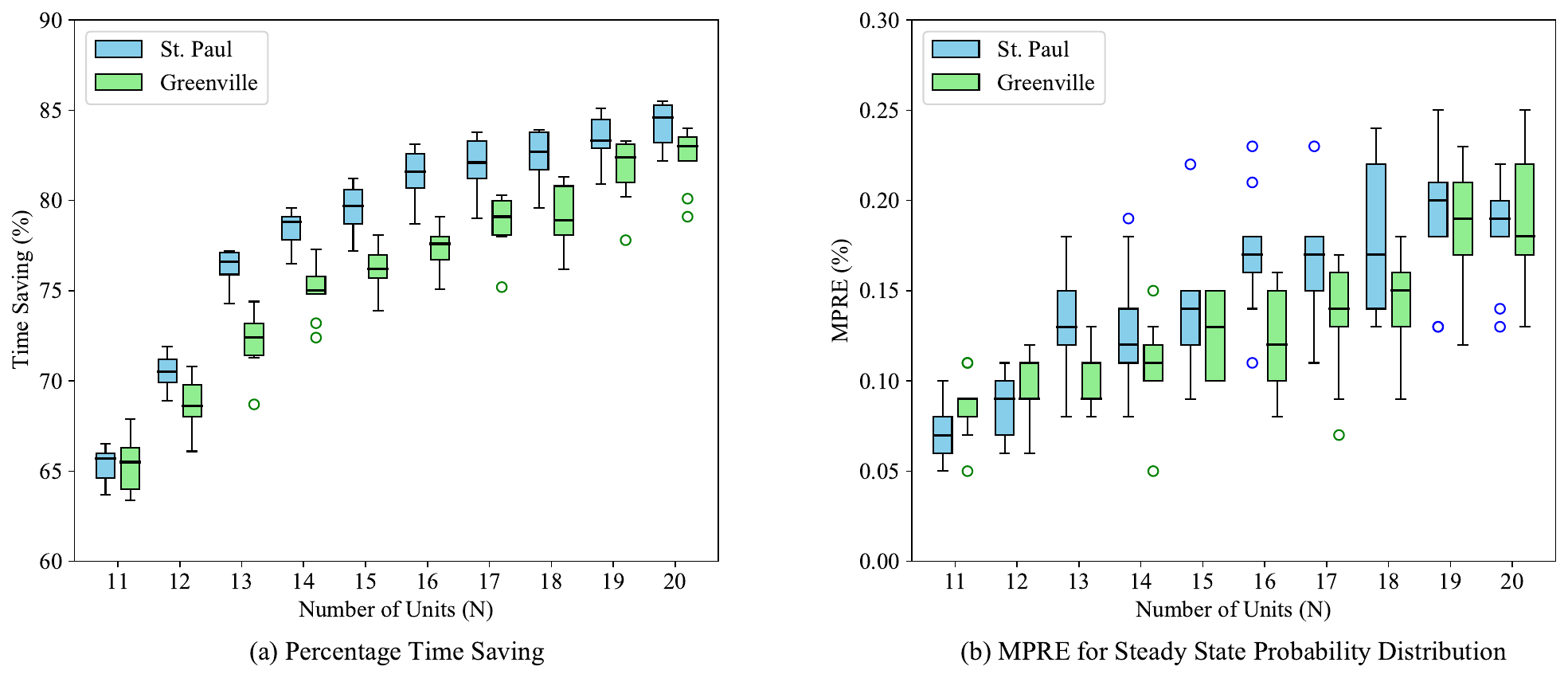}
    \end{center}
    \caption{Percentage Time Savings and Errors for Homogeneous Cases.}
    {\footnotesize 
    \linespread{1.0}\selectfont 
    \noindent\textit{Notes.} The percentage time savings is calculated as $\frac{T-\hat{T}}{T} \times 100\%$, where $T$ and $\hat{T}$ represent the running times of the alternative hyperplane method and our method, respectively. The coefficient generation time is excluded. The MPRE $(\%)$ is computed using Equation \eqref{eqn-MPRE}, where $\mathcal{M}_m$ and $\hat{\mathcal{M}}_m$ correspond to $P\{B_m\}$ and $\hat{P}\{B_m\}$, the steady-state distributions from the two methods, respectively. For each box, we compute 9 scenarios by varying the system utilization $\rho$ from 0.1 to 0.9. 
    \par}
    \label{fig_homo_combined}
\end{figure}

To compare our method with the alternating hyperplane approach in \cite{larson74hypercube}, which is the state-of-the-art method for solving the problem exactly and has been used in a RAND corporation document \citep{larson1975hypercube}, we compare both methods under the setting of uniform service rates for all units. The original version of Larson’s program was implemented in PL/I. For a fair comparison, we reimplemented and updated the code using modern programming techniques. The results are shown in Figure \ref{fig_homo_combined}, and the detailed computation times for both methods are provided in Section~\ref{ssec_extended_homo}. 
From the figure, we observe that as the number of units, $N$, increases, the percentage of total time savings achieved by our algorithm grows, while maintaining a maximum relative error of less than 0.25\% for all cases. 
In the case of 20 units, our algorithm is approximately 83\% faster in terms of total time. The time savings result from both coefficient generation and computation. 

\subsection{Comparison with Discrete-Event Simulation} \label{sec-num_ComparisonDES}

\begin{table}[b]
    \caption{Run Time and MPRE (\%) of the Discrete-Event Simulation and CPU Algorithm vs. Exact Solution.}
    \centering
    \renewcommand\arraystretch{1.2}
    \tabcolsep=6pt
    \footnotesize
    \begin{adjustbox}{center}
    \begin{tabular}{@{\extracolsep{2pt}}c|crrrr|cccc@{}}
        \hline
        & \multicolumn{5}{c|}{Discrete-Event Simulation} & \multicolumn{4}{c}{CPU Algorithm} \\
        \hline
        \# Units & \# Arrivals & \multicolumn{1}{c}{Time} & \multicolumn{3}{c|}{MPRE (\%)} & Time & \multicolumn{3}{c}{MPRE (\%)} \\
        \cline{4-6} \cline{8-10}
         ($N$) & ($\times 10^7$) & \multicolumn{1}{c}{(s)} & \multicolumn{1}{c}{$\rho=0.1$} & \multicolumn{1}{c}{$\rho=0.5$} & \multicolumn{1}{c|}{$\rho=0.9$} & \multicolumn{1}{c}{(s)} & \multicolumn{1}{c}{$\rho=0.1$} & \multicolumn{1}{c}{$\rho=0.5$} & \multicolumn{1}{c}{$\rho=0.9$} \\
        \hline
        8 & 0.5 & $118.5$ & $38.8 \; \; \; \; (\pm 5.9)$ & $3.1 \ (\pm  0.2)$ & $5.2 \ (\pm  0.4)$ & $0.3$ & $0.04$ & $0.08$ & $0.10$\\
        9 & 1 & $238.1$ & $59.2 \; \; \;\;  (\pm  4.3)$ & $3.5 \ (\pm  0.2)$ & $6.0 \ (\pm  0.2)$ & $0.5$ & $0.03$ & $0.15$ & $0.11$\\
        10 & 2 & $485.2$ & $93.1 \ (\pm  12.0)$ & $3.9 \ (\pm  0.3)$ & $7.0 \ (\pm  0.3)$ & $0.8$ & $0.07$ & $0.17$ & $0.16$\\
        11 & 4 & $977.8$ & $198.1 \ (\pm  22.8)$ & $4.4 \ (\pm  0.2)$ & $9.1 \ (\pm  0.3)$ & $1.0$ & $0.07$ & $0.18$ & $0.16$ \\
        12 & 8 & $1963.7$ & $474.0 \ (\pm  77.1)$ & $5.1 \ (\pm  0.2)$ & $10.9 \ (\pm  0.7)$ & $1.5$ & $0.10$ & $0.27$ & $0.22$ \\
        \hline
    \end{tabular}
    \end{adjustbox}
    \begin{tablenotes}
        \scriptsize
        \item \textit{Note.}  MPRE is computed from the sparse solver’s solution and the averaged probability distributions from the simulations and CPU algorithm. The reported run time is the mean time across different $\rho$ values.  
    \end{tablenotes} 
    \label{tab_simu_com}
\end{table}

One major advantage of the CPU algorithm is its exact solution accuracy and convergence guarantee. As mentioned earlier, approximation algorithms have been developed (e.g., \citealt{larson75approx}, \citealt{jarvis85approximating}, \citealt{budge09technical}, and \citealt{hua2022cross}) to solve these systems quickly. While these approximation methods are highly effective and practical due to their speed, they lack guarantees for convergence and accuracy, which means that exact methods or discrete-event simulation are required to validate the results of these algorithms. 
Therefore, we compare the CPU algorithm to a discrete-event simulation. For the simulation, we run 20 replications for each case. The steady-state probability distribution is estimated based on the proportion of time the system spends in each state. For comparison, we fix the total number of arrivals and evaluate the accuracy of the solution and simulation time against the CPU algorithm. The results are shown in Table \ref{tab_simu_com}. Compared with discrete-event simulation, our CPU algorithm runs more than 500 times faster while achieving higher accuracy. When system utilization $\rho = 0.1$, the MPRE from the discrete-event simulation is relatively large, as the low arrival rate limits the system’s ability to sufficiently explore the state space. As the system size increases, the simulation accuracy deteriorates, and its performance also becomes sensitive to the system load $\rho$. 
In contrast, the CPU algorithm maintains errors below 0.3\% consistently within a significantly shorter run time.

\subsection{Results on Parallel Computing}
\label{sec_distributed}

\textcolor{black}{We} evaluate the performance of our parallel algorithm. Solving Larson’s (1974) hypercube exactly is limited to small problems because of memory limitations for storing coefficients and the computational time required. As a result, approximation methods or simulation are used instead. In contrast, the Parallel CPU Algorithm (Algorithm \ref{algo-parallel}) effectively solves large-scale problems, as shown in Table \ref{tab_para}.

Table \ref{tab_para} presents the run time of Algorithm \ref{algo-parallel} for 21 to 30 units.
For each $N$, we run the parallel algorithm with a different number of workers to assess its degree of parallelization. In each case, we use Algorithm~\ref{alg:transition_generation} to generate the transition rates during the first iteration. These data are then stored in an external cache with a large capacity. In subsequent iterations, the cached data are directly loaded rather than regenerating them by re-executing Algorithm~\ref{alg:transition_generation}. Table~\ref{tab_para} reports both the computation time for the first iteration and the total execution time of Algorithm~\ref{algo-parallel}.

\begin{table}[bp]
    \centering
    \renewcommand\arraystretch{1.2}
    \tabcolsep=3.5pt 
    \caption{Run Time (mins) for Parallel CPU.}
    \footnotesize
    \label{tab_para}
    \begin{tabular}{@{}c | rrcrrcrrcrrcrrcrr | cc@{}}
        \hline
        \multirow{2}{*}{\# Units} & \multicolumn{17}{c|}{\# Workers } &  \multicolumn{2}{c}{\multirow{2}{*}{$P$}} \\
        \cline{2-18}
        & \multicolumn{2}{c}{2} & & \multicolumn{2}{c}{4} & & \multicolumn{2}{c}{6} & & \multicolumn{2}{c}{8} & & \multicolumn{2}{c}{10} & & \multicolumn{2}{c|}{12} &  &  \\
        \cline{2-3} \cline{5-6} \cline{8-9} \cline{11-12} \cline{14-15} \cline{17-20}
        ($N$) & First & Total & & First & Total & & First & Total & & First & Total & & First & Total & & First & Total &  First & Total \\
        \hline
        21 & 2.6 & 4.2 & & 1.5 & 2.4 & & 1.1 & 1.7 & & 0.9 & 1.5 & & 0.7 & 1.3 & & 0.7 & 1.2 & 94.2\% & 92.8\% \\
        22 & 5.4 & 8.2 & & 3.0 & 4.7 & & 1.9 & 3.2 & & 1.8 & 3.0 & & 1.5 & 2.6 & & 1.4 & 2.4 & 94.9\% & 92.5\% \\
        23 & 8.1 & 14.9 & & 4.4 & 8.3 & & 3.0 & 5.8 & & 2.6 & 5.4 & & 2.3 & 4.7 & & 2.1 & 4.3 & 95.1\% & 92.8\% \\
        24 & 16.0 & 30.5 & & 8.5 & 17.0 & & 6.4 & 13.4 & & 5.1 & 10.6 & & 4.6 & 9.6 & & 4.0 & 9.0 & 94.9\% & 92.1\% \\
        25 & 31.4 & 58.8 & & 17.1 & 33.6 & & 11.1 & 23.2 & & 9.4 & 20.3 & & 8.4 & 18.8 & & 6.7 & 16.5 & 96.7\% & 92.9\% \\
        26 & 63.2 & 121.3 & & 34.2 & 68.9 & & 24.4 & 51.5 & & 18.5 & 41.6 & & 15.2 & 36.5 & & 14.4 & 35.5 & 96.7\% & 92.7\% \\
        27 & 131.8 & 261.9 & & 69.5 & 143.7 & & 45.9 & 101.6 & & 39.0 & 90.2 & & 34.8 & 84.6 & & 27.9 & 75.0 & 97.0\% & 92.9\% \\
        28 & 279.3 & 579.7 & & 147.0 & 333.0 & & 101.3 & 244.4 & & 83.1 & 209.0 & & 67.1 & 184.3 & & 62.7 & 174.5 & 97.0\% & 91.9\% \\
        29 & 555.5 & 1212.7 & & 290.0 & 647.0 & & 206.6 & 479.8 & & 159.6 & 407.6 & & 136.6 & 374.7 & & 120.8 & 358.1 & 97.1\% & 93.0\% \\
        30 & 1056.3 & 2402.8 & & 564.4 & 1421.3 & & 416.4 & 1105.2 & & 356.1 & 931.1 & & 245.2 & 778.1 & & 215.1 & 717.0 & 96.5\% & 91.0\% \\
        \hline
    \end{tabular}
    \begin{tablenotes}
        \scriptsize
        \item \textit{Note.} First indicates the run time of the first iteration, and Total denotes the overall execution time. The column labeled $P$ shows the proportion of the algorithm that is parallelizable, as determined by Lemma~\ref{lemma-Amdahl}.
    \end{tablenotes}
\end{table}

As shown in Table~\ref{tab_para}, our results demonstrate strong parallelization efficiency in both coefficient generation and probability iteration. For all experiments, the coefficient of determination $R^2$ from the linear regression fitting of Amdahl’s law equals one for every problem size $N$, indicating an excellent fit. According to Amdahl's law (Lemma~\ref{lemma-Amdahl}), over 91\% of Algorithm~\ref{algo-parallel} is parallelizable, as reported in the column labeled $P$. In contrast, previous methods could only handle up to 20 units due to memory limitations. We note that all results were obtained on a personal computer. With additional computational resources, more workers can be deployed to handle larger problems under the same parallel framework and convergence guarantees.

\color{black}

\subsection{Results for General Service Time Distributions}
\label{ssec_general_rate}


\begin{table}[t]
    \centering
    \renewcommand\arraystretch{1.2}
    \tabcolsep=4pt
    \caption{MPRE (\%) of MRT and Utilization with Simulation for Various Service Time Distributions.}
    \scriptsize
    \begin{tabular}{@{\extracolsep{2pt}}cccccccc} 
        \hline
        \multirow{2}{*}{Distribution} & \multirow{2}{*}{$\rho$} & \multicolumn{6}{c}{MPRE (\%) of Mean Response Time (MRT)}  \\
        \cline{3-8}
        & & $N=8$ & $N=9$ & $N=10$ & $N=11$ & $N=12$ & Average\\
        \hline
        \multirow{3}{*}{\shortstack{Exponential \\ ($\nu_i$)}}
        & 0.1 & $0.008 \ (\pm 0.002)$ & $0.004 \ (\pm 0.001)$ & $0.003 \ (\pm 0.001)$ & $0.003 \ (\pm 0.001)$ & $0.004 \ (\pm 0.001)$ & \\
        & 0.5 & $0.024 \ (\pm 0.010)$ & $0.018 \ (\pm 0.006)$ & $0.010 \ (\pm 0.004)$ & $0.007 \ (\pm 0.003)$ & $0.007 \ (\pm 0.003)$ & $0.012$ \\
        & 0.9 & $0.022 \ (\pm 0.006)$ & $0.019 \ (\pm 0.005)$ & $0.019 \ (\pm 0.006)$ & $0.009 \ (\pm 0.002)$ & $0.008 \ (\pm 0.003)$ & \\ 
        \hline

        \multirow{3}{*}{\shortstack{Uniform \\ $\left(\left[\frac{0.75}{\nu_i}, \frac{1.25}{\nu_i}\right]\right)$}}
        & 0.1 & $0.006 \ (\pm 0.002)$ & $0.003 \ (\pm 0.001)$ & $0.003 \ (\pm 0.001)$ & $0.004 \ (\pm 0.001)$ & $0.003 \ (\pm 0.001)$ & \\
        & 0.5 & $0.022 \ (\pm 0.007)$ & $0.012 \ (\pm 0.005)$ & $0.022 \ (\pm 0.005)$ & $0.008 \ (\pm 0.003)$ & $0.015 \ (\pm 0.003)$ & $0.016$ \\
        & 0.9 & $0.027 \ (\pm 0.009)$ & $0.019 \ (\pm 0.006)$ & $0.043 \ (\pm 0.007)$ & $0.078 \ (\pm 0.004)$ & $0.024 \ (\pm 0.004)$ & \\
        \hline

        \multirow{3}{*}{\shortstack{Log-normal \\$\left(\sigma=1, \mu = -\ln{\nu_i} - \frac{1}{2}\right)$}}
        & 0.1 & $0.009 \ (\pm 0.003)$ & $0.004 \ (\pm 0.001)$ & $0.005 \ (\pm 0.001)$ & $0.004 \ (\pm 0.001)$ & $0.004 \ (\pm 0.002)$ & \\
        & 0.5 & $0.031 \ (\pm 0.011)$ & $0.021 \ (\pm 0.006)$ & $0.009 \ (\pm 0.004)$ & $0.012 \ (\pm 0.004)$ & $0.008 \ (\pm 0.003)$ & $0.014$\\
        & 0.9 & $0.027 \ (\pm 0.008)$ & $0.023 \ (\pm 0.006)$ & $0.020 \ (\pm 0.006)$ & $0.012 \ (\pm 0.004)$ & $0.012 \ (\pm 0.003)$ & \\
        \hline

        \multirow{3}{*}{\shortstack{Gamma \\ $\left(\alpha =0.5, \beta = \frac{1}{0.5\nu_i}\right)$}}
        & 0.1 & $0.010 \ (\pm 0.004)$ & $0.004 \ (\pm 0.001)$ & $0.004 \ (\pm 0.001)$ & $0.004 \ (\pm 0.001)$ & $0.004 \ (\pm 0.001)$ & \\
        & 0.5 & $0.020 \ (\pm 0.006)$ & $0.017 \ (\pm 0.006)$ & $0.012 \ (\pm 0.005)$ & $0.014 \ (\pm 0.004)$ & $0.006 \ (\pm 0.002)$ & $0.013$ \\
        & 0.9 & $0.026 \ (\pm 0.010)$ & $0.030 \ (\pm 0.009)$ & $0.024 \ (\pm 0.007)$ & $0.010 \ (\pm 0.003)$ & $0.012 \ (\pm 0.004)$ & \\
        \hline

        \multirow{3}{*}{\shortstack{Gamma \\ $\left(\alpha =5, \beta = \frac{1}{5\nu_i}\right)$}}
        & 0.1 & $0.009 \ (\pm 0.002)$ & $0.004 \ (\pm 0.001)$ & $0.003 \ (\pm 0.001)$ & $0.003 \ (\pm 0.001)$ & $0.004 \ (\pm 0.001)$ & \\
        & 0.5 & $0.019 \ (\pm 0.006)$ & $0.019 \ (\pm 0.005)$ & $0.012 \ (\pm 0.004)$ & $0.011 \ (\pm 0.003)$ & $0.014 \ (\pm 0.003)$ & $0.013$ \\
        & 0.9 & $0.020 \ (\pm 0.008)$ & $0.023 \ (\pm 0.006)$ & $0.017 \ (\pm 0.005)$ & $0.031 \ (\pm 0.003)$ & $0.015 \ (\pm 0.003)$ & \\
        \toprule
        \multirow{2}{*}{Distribution} & \multirow{2}{*}{$\rho$} & \multicolumn{6}{c}{MPRE (\%) of Unit Utilization}  \\
        \cline{3-8}
        & & $N=8$ & $N=9$ & $N=10$ & $N=11$ & $N=12$ & Average \\ 
        \hline
        \multirow{3}{*}{\shortstack{Exponential \\ ($\nu_i$)}}
        & 0.1 & $1.06 \ (\pm 0.14)$ & $0.85 \ (\pm 0.12)$ & $0.59 \ (\pm 0.08)$ & $0.54 \ (\pm 0.09)$ & $0.41 \ (\pm 0.07)$ & \\
        & 0.5 & $0.56 \ (\pm 0.08)$ & $0.45 \ (\pm 0.08)$ & $0.34 \ (\pm 0.05)$ & $0.26 \ (\pm 0.03)$ & $0.20 \ (\pm 0.03)$ & $0.41$ \\
        & 0.9 & $0.36 \ (\pm 0.05)$ & $0.26 \ (\pm 0.04)$ & $0.18 \ (\pm 0.02)$ & $0.13 \ (\pm 0.02)$ & $0.10 \ (\pm 0.02)$ & \\
        \hline

        \multirow{3}{*}{\shortstack{Uniform \\ $\left(\left[\frac{0.75}{\nu_i}, \frac{1.25}{\nu_i}\right]\right)$}}
        & 0.1 & $0.69 \ (\pm 0.08)$ & $0.60 \ (\pm 0.08)$ & $0.43 \ (\pm 0.06)$ & $0.34 \ (\pm 0.05)$ & $0.31 \ (\pm 0.03)$ & \\
        & 0.5 & $0.48 \ (\pm 0.05)$ & $0.42 \ (\pm 0.04)$ & $0.45 \ (\pm 0.04)$ & $0.71 \ (\pm 0.05)$ & $0.63 \ (\pm 0.04)$ & $0.47$ \\
        & 0.9 & $0.34 \ (\pm 0.03)$ & $0.38 \ (\pm 0.03)$ & $0.35 \ (\pm 0.02)$ & $0.47 \ (\pm 0.02)$ & $0.57 \ (\pm 0.02)$ & \\
        \hline

        \multirow{3}{*}{\shortstack{Log-normal \\$\left(\sigma=1, \mu = -\ln{\nu_i} - \frac{1}{2}\right)$}}
        & 0.1 & $1.13 \ (\pm 0.17)$ & $0.89 \ (\pm 0.11)$ & $0.76 \ (\pm 0.08)$ & $0.64 \ (\pm 0.10)$ & $0.49 \ (\pm 0.09)$ & \\
        & 0.5 & $0.68 \ (\pm 0.10)$ & $0.54 \ (\pm 0.10)$ & $0.35 \ (\pm 0.04)$ & $0.30 \ (\pm 0.06)$ & $0.25 \ (\pm 0.03)$ & $0.47$ \\
        & 0.9 & $0.40 \ (\pm 0.06)$ & $0.30 \ (\pm 0.04)$ & $0.21 \ (\pm 0.03)$ & $0.14 \ (\pm 0.03)$ & $0.12 \ (\pm 0.02)$ & \\
        \hline

        \multirow{3}{*}{\shortstack{Gamma \\ $\left(\alpha =0.5, \beta = \frac{1}{0.5\nu_i}\right)$}}
        & 0.1 & $1.07 \ (\pm 0.13)$ & $1.00 \ (\pm 0.09)$ & $0.77 \ (\pm 0.09)$ & $0.55 \ (\pm 0.08)$ & $0.52 \ (\pm 0.09)$ & \\
        & 0.5 & $0.69 \ (\pm 0.08)$ & $0.48 \ (\pm 0.07)$ & $0.43 \ (\pm 0.07)$ & $0.34 \ (\pm 0.05)$ & $0.23 \ (\pm 0.03)$ & $0.47$ \\
        & 0.9 & $0.42 \ (\pm 0.07)$ & $0.33 \ (\pm 0.04)$ & $0.26 \ (\pm 0.03)$ & $0.19 \ (\pm 0.02)$ & $0.13 \ (\pm 0.02)$ & \\
        \hline

        \multirow{3}{*}{\shortstack{Gamma \\ $\left(\alpha =5, \beta = \frac{1}{5\nu_i}\right)$}}
        & 0.1 & $0.84 \ (\pm 0.10)$ & $0.58 \ (\pm 0.07)$ & $0.50 \ (\pm 0.07)$ & $0.40 \ (\pm 0.05)$ & $0.30 \ (\pm 0.05)$ & \\
        & 0.5 & $0.45 \ (\pm 0.07)$ & $0.44 \ (\pm 0.06)$ & $0.29 \ (\pm 0.03)$ & $0.38 \ (\pm 0.06)$ & $0.31 \ (\pm 0.03)$ & $0.37$ \\
        & 0.9 & $0.27 \ (\pm 0.04)$ & $0.23 \ (\pm 0.02)$ & $0.18 \ (\pm 0.02)$ & $0.21 \ (\pm 0.02)$ & $0.19 \ (\pm 0.02)$ & \\
        \hline
    \end{tabular}
    \label{tab_simulation}
    \begin{tablenotes}
        \scriptsize
        \item \textit{Note.} In the simulation, we run 20 replications for each case, with each replication simulating $10^6$ arrivals. The MPRE (\%) is computed using the mean response time  or unit utilization obtained from the simulation ($\mathcal{M}_m$) and our CPU algorithm ($\hat{\mathcal{M}}_m$). The last column of each distribution reports the average MPRE (\%) across all 15 cases. All errors remain below 1.4\%.
    \end{tablenotes}
\end{table}

While the model is based on a Markovian system for analytical tractability, it is well established that the steady-state behavior of zero-queue systems is insensitive to the specific service time distribution (\citealt{ross2014introduction}, page 543). In this section, we evaluate the model's performance across various service time distributions, including log-normal, gamma with different parameters, and uniform distributions. This greatly increases the applicability of our solution to real-world systems. We note that the insensitivity property does not hold for finite-buffer systems \citep{kimura1996transform, keilson1966ergodic}. Therefore, in this section, we restrict our attention to the zero-queue system.

For non-Markovian systems, steady-state probability distributions are not suitable as evaluation metrics since they are only valid for Markovian systems. In non-Markovian systems, the concept of states as defined in Markovian models becomes meaningless due to the absence of the memoryless property. Therefore, we use mean response time (MRT) and utilization for each unit as performance metrics. These are important metrics in emergency service systems and are commonly used to evaluate system performance \citep{erkut2008ambulance, budge09technical}. 

For comparison, we set the means of each service distribution to be identical. \textcolor{black}{The lognormal distribution has the same variance as the exponential distribution, while both the uniform and gamma distributions with $\beta = \tfrac{1}{5\nu_i}$ have smaller variances. In contrast, the gamma distribution with $\beta = \tfrac{1}{0.5\nu_i}$ has a larger variance.} 
The results, presented in Table \ref{tab_simulation}, cover scenarios with $N=8$ to 12 units under varying system loads. We compare our solution to simulation results obtained by running the simulation over an extended period. Across all tested distributions, our method remains highly accurate, yielding percentage errors below 0.08\% for mean response time and 1.4\% for unit utilization, with consistently low standard deviations.

\section{Conclusion}
\color{black}
\label{sec_conclusion}
\ins{In this paper, we propose new and faster sequential and parallel CPU algorithms for exactly solving Larson’s (1974) spatial hypercube queueing model. Both algorithms achieve geometric convergence to the exact solution. Our paper is the first to improve upon the performance of the classic  \cite{larson74hypercube} hypercube queueing model since its introduction 50 years ago. The model assumes identical service rates for all servers, whereas we consider both homogeneous and heterogeneous service rate settings. Under the homogeneous setting where all servers have identical service rates, our CPU algorithm reduces computation time by more than 80\% compared with the \cite{larson74hypercube} algorithm. For example, it saves about 15 minutes for a problem with 20 units, while larger problems with more than 20 units are intractable for the original approach. Our method can now handle larger instances.}

\ins{When service rates vary across servers, the Larson approach cannot handle the model. For comparison, we benchmark our algorithm against a sparse solver and a discrete-event simulation. Our method runs nearly 1,000 times faster than the sparse solver and more than 500 times faster than the discrete-event simulation, while achieving higher accuracy. We also develop a parallel version of our algorithm that achieves over 91\% parallelization efficiency. With 12 workers, the parallel algorithm provides an additional eightfold speedup over the sequential version and enables the solution of substantially larger problems. The performance gain would further increase with additional computational resources.}
We have open-sourced both our algorithm and Larson’s original approach, which is reprogrammed in Python code to provide accessibility to other researchers.


\ACKNOWLEDGMENT{\ins{This research was supported by the National Natural Science Foundation of China [72031006].}}







\bibliographystyle{informs2014}
\bibliography{references.bib}

@article{erkut2008ambulance,
  title={Ambulance location for maximum survival},
  author={Erkut, Erhan and Ingolfsson, Armann and Erdo{\u{g}}an, G{\"u}ne{\c{s}}},
  journal={Naval Research Logistics (NRL)},
  volume={55},
  number={1},
  pages={42--58},
  year={2008},
  publisher={Wiley Online Library}
}

@article{rastpour2020modeling,
  title={Modeling yellow and red alert durations for ambulance systems},
  author={Rastpour, Amir and Ingolfsson, Armann and Kolfal, Bora},
  journal={Production and Operations Management},
  volume={29},
  number={8},
  pages={1972--1991},
  year={2020},
  publisher={Wiley Online Library}
}

@article{alanis2013markov,
  title={A Markov chain model for an EMS system with repositioning},
  author={Alanis, Ramon and Ingolfsson, Armann and Kolfal, Bora},
  journal={Production and operations management},
  volume={22},
  number={1},
  pages={216--231},
  year={2013},
  publisher={Wiley Online Library}
}

@book{larson1975hypercube,
  title={Hypercube queuing model: User's Manual},
  author={Larson, Richard C},
  year={1975},
  publisher={Rand}
}

@article{burwell1993modeling,
  title={Modeling co-located servers and dispatch ties in the hypercube model},
  author={Burwell, Timothy H and Jarvis, James P and McKnew, Mark A},
  journal={Computers \& Operations Research},
  volume={20},
  number={2},
  pages={113--119},
  year={1993},
  publisher={Elsevier}
}

@article{iannoni2023review,
  title={A review on hypercube queuing model's extensions for practical applications},
  author={Iannoni, Ana P and Morabito, Reinaldo},
  journal={Socio-Economic Planning Sciences},
  pages={101677},
  year={2023},
  publisher={Elsevier}
}

@article{hua2022cross,
  title={Cross-Trained Fire-Medics Respond to Medical Calls and Fire Incidents: A Fast Algorithm for a Three-State Spatial Queuing Problem},
  author={Hua, Cheng and Swersey, Arthur J},
  journal={Manufacturing \& Service Operations Management},
  volume={24},
  number={6},
  pages={3177--3192},
  year={2022},
  publisher={INFORMS}
}

@article{iannoni2007multiple,
  title={A multiple dispatch and partial backup hypercube queuing model to analyze emergency medical systems on highways},
  author={Iannoni, Ana Paula and Morabito, Reinaldo},
  journal={Transportation research part E: logistics and transportation review},
  volume={43},
  number={6},
  pages={755--771},
  year={2007},
  publisher={Elsevier}
}

@article{iannoni2009optimization,
  title={An optimization approach for ambulance location and the districting of the response segments on highways},
  author={Iannoni, Ana Paula and Morabito, Reinaldo and Saydam, Cem},
  journal={European Journal of Operational Research},
  volume={195},
  number={2},
  pages={528--542},
  year={2009},
  publisher={Elsevier}
}

@article{mcewen1974patrol,
  title={Patrol planning in the Rotterdam police department},
  author={McEwen, Tom and Larson, Richard C},
  journal={Journal of Criminal Justice},
  volume={2},
  number={3},
  pages={235--238},
  year={1974},
  publisher={Elsevier}
}

@phdthesis{chelst1975quantitative,
  title={Quantitative models for police patrol deployment.},
  author={Chelst, Kenneth Richard},
  year={1975},
  school={Massachusetts Institute of Technology}
}

@incollection{larson2013hypercube,
  title={Hypercube queueing model},
  author={Larson, Richard C},
  booktitle={Encyclopedia of operations research and management science},
  publisher={Springer New York, NY},
  editor = {Saul I. Gass and Michael C. Fu},
  pages={733--739},
  year={2013},
  edition = 3
}

@article{ghobadi2019hypercube,
  title={Hypercube queuing models in emergency service systems: A state-of-the-art review},
  author={Ghobadi, Maryam and Arkat, Jamal and Tavakkoli-Moghaddam, Reza},
  journal={Scientia Iranica},
  volume={26},
  number={2},
  pages={909--931},
  year={2019},
  publisher={Sharif University of Technology}
}

@article{berman1982median,
  title={The median problem with congestion},
  author={Berman, Oded and Larson, Richard C},
  journal={Computers \& Operations Research},
  volume={9},
  number={2},
  pages={119--126},
  year={1982},
  publisher={Elsevier}
}

@article{larson75approx,
 author = {Richard C. Larson},
 journal = {Operations Research},
 number = {5},
 pages = {845-868},
 publisher = {INFORMS},
 title = {Approximating the Performance of Urban Emergency Service Systems},
 volume = {23},
 year = {1975}
}

@article{larson74hypercube,
  title={A hypercube queuing model for facility location and redistricting in urban emergency services},
  author={Larson, Richard C},
  journal={Computers \& Operations Research},
  volume={1},
  number={1},
  pages={67--95},
  year={1974},
  publisher={Elsevier}
}

@article{budge09technical,
  title={Technical Note-Approximating Vehicle Dispatch Probabilities for Emergency Service Systems with Location-Specific Service Times and Multiple Units per Location},
  author={Budge, Susan and Ingolfsson, Armann and Erkut, Erhan},
  journal={Operations Research},
  volume={57},
  number={1},
  pages={251--255},
  year={2009},
  publisher={Informs}
}

@article{jarvis85approximating,
  title={Approximating the equilibrium behavior of multi-server loss systems},
  author={Jarvis, James P},
  journal={Management Science},
  volume={31},
  number={2},
  pages={235--239},
  year={1985},
  publisher={INFORMS}
}

@article{chelst81multiple,
  title={Multiple unit dispatches in emergency services: models to estimate system performance},
  author={Chelst, Kenneth and Barlach, Ziv},
  journal={Management Science},
  volume={27},
  number={12},
  pages={1390--1409},
  year={1981},
  publisher={INFORMS}
}

@article{mendoncca2001analysing,
  title={Analysing emergency medical service ambulance deployment on a {B}razilian highway using the hypercube model},
  author={Mendon{\c{c}}a, FC and Morabito, R},
  journal={Journal of the Operational Research Society},
  volume={52},
  number={3},
  pages={261--270},
  year={2001},
  publisher={Springer}
}

@article{budge2010empirical,
  title={Empirical analysis of ambulance travel times: the case of Calgary emergency medical services},
  author={Budge, Susan and Ingolfsson, Armann and Zerom, Dawit},
  journal={Management Science},
  volume={56},
  number={4},
  pages={716--723},
  year={2010},
  publisher={INFORMS}
}

@inproceedings{amdahl1967validity,
  title={Validity of the single processor approach to achieving large scale computing capabilities},
  author={Amdahl, GM},
  booktitle={AFIPS Spring Joint Computer Conference Proceeding},
  volume={30},
  pages={483--485},
  year={1967}
}

@article{luo2015fully,
  title={Fully sequential procedures for large-scale ranking-and-selection problems in parallel computing environments},
  author={Luo, Jun and Hong, L Jeff and Nelson, Barry L and Wu, Yang},
  journal={Operations Research},
  volume={63},
  number={5},
  pages={1177--1194},
  year={2015},
  publisher={INFORMS}
}

@book{cormen2022introduction,
  title={Introduction to algorithms},
  author={Cormen, Thomas H and Leiserson, Charles E and Rivest, Ronald L and Stein, Clifford},
  year={2022},
  publisher={MIT press}
}

@book{ross2014introduction,
  title={Introduction to probability models},
  author={Ross, Sheldon M},
  year={2014},
  publisher={Academic press}
}

@article{beojone2021efficient,
  title={An efficient exact hypercube model with fully dedicated servers},
  author={Beojone, Caio Vitor and M{\'a}ximo de Souza, Regiane and Iannoni, Ana Paula},
  journal={Transportation Science},
  volume={55},
  number={1},
  pages={222--237},
  year={2021},
  publisher={INFORMS}
}

@article{zhu2022data,
  title={Data-driven optimization for Atlanta police-zone design},
  author={Zhu, Shixiang and Wang, He and Xie, Yao},
  journal={INFORMS Journal on Applied Analytics},
  volume={52},
  number={5},
  pages={412--432},
  year={2022},
  publisher={INFORMS}
}

@article{maxwell2010approximate,
  title={Approximate dynamic programming for ambulance redeployment},
  author={Maxwell, Matthew S and Restrepo, Mateo and Henderson, Shane G and Topaloglu, Huseyin},
  journal={INFORMS Journal on Computing},
  volume={22},
  number={2},
  pages={266--281},
  year={2010},
  publisher={INFORMS}
}

@article{jenkins2021approximate,
  title={Approximate dynamic programming for military medical evacuation dispatching policies},
  author={Jenkins, Phillip R and Robbins, Matthew J and Lunday, Brian J},
  journal={INFORMS Journal on Computing},
  volume={33},
  number={1},
  pages={2--26},
  year={2021},
  publisher={INFORMS}
}

@article{kimura1996transform,
  title={A transform-free approximation for the finite capacity M/G/s queue},
  author={Kimura, Toshikazu},
  journal={Operations Research},
  volume={44},
  number={6},
  pages={984--988},
  year={1996},
  publisher={INFORMS}
}

@article{keilson1966ergodic,
  title={The ergodic queue length distribution for queueing systems with finite capacity},
  author={Keilson, Julian},
  journal={Journal of the Royal Statistical Society Series B: Statistical Methodology},
  volume={28},
  number={1},
  pages={190--201},
  year={1966},
  publisher={Oxford University Press}
}

@article{blelloch1996programming,
  title={Programming parallel algorithms},
  author={Blelloch, Guy E},
  journal={Communications of the ACM},
  volume={39},
  number={3},
  pages={85--97},
  year={1996},
  publisher={ACM New York, NY, USA}
}

@book{padua2011encyclopedia,
  title={Encyclopedia of parallel computing},
  author={Padua, David},
  year={2011},
  publisher={Springer Science \& Business Media}
}

%
%
%
\clearpage
\renewcommand{\thepage}{ec\arabic{page}}  
\setcounter{page}{1}

\begin{appendices}
\renewcommand{\thesection}{EC.\arabic{section}}   
\renewcommand{\thetable}{EC.\arabic{table}}   
\renewcommand{\thefigure}{EC.\arabic{figure}}
\renewcommand{\theequation}{EC.\arabic{equation}}
\renewcommand{\thetheorem}{EC.\arabic{theorem}}
\renewcommand{\theproposition}{EC.\arabic{proposition}}
\renewcommand{\thelemma}{EC.\arabic{lemma}}
\renewcommand{\thealgorithm}{EC.\arabic{algorithm}}
\counterwithin{equation}{section}
\counterwithin{table}{section}
\counterwithin{figure}{section}
\setcounter{section}{0}
\setcounter{equation}{0}
\setcounter{table}{0}
\setcounter{figure}{0}
\setcounter{lemma}{0}
\setcounter{theorem}{0}
\setcounter{algorithm}{0}

\vspace*{0.1cm}
\begin{center}
  \large\textbf{E-Companion --- ``A Geometrically Convergent Solution to Spatial Hypercube Queueing Models''}\\
\end{center}
\vspace*{0.3cm}

\section{Miscellaneous Proofs}
\label{app_proof}

\subsection{Proof of Proposition \ref{lemma_bnd}}
\label{app_thm_equal_2_3}
\proof{Proof.}
We show that the conditional probability formulation is a transformation from the two-dimensional formulation. First, we express the joint probability $P\{n, B_m\}$ as 
\begin{eqnarray}
P\{n, B_m\} = p(n)\cdot p_n(B_m). \label{eq:conditional}
\end{eqnarray}
Next, the balance equations \eqref{eq:transformer_bal_eq_heter} become, for $m\in \mathscr{C}_{n}$ and $n=0,1\ldots,N$, 
\begin{eqnarray}
    p(n)p_n(B_m)\left(\lambda_{m}+ \mu_m \right)=\sum_{l \in \mathscr{C}_{n-1}} p(n-1)p_{n-1}(B_l) \lambda_{lm}+\sum_{l \in \mathscr{C}^m_{n+1}} p(n+1)p_{n+1}(B_l) \mu_{lm}. \label{eq:new_bal}
\end{eqnarray}
We note that $p(n)$ represents the steady-state probability in the birth-death process, and is determined by
\begin{equation*}
p(n)= \frac{1}{G} \prod_{s=1}^{n} \frac{\lambda(s-1)}{\mu(s)}, \quad \mbox{where} \quad G = 1+\sum_{n=1}^{N} \prod_{s=1}^{n} \frac{\lambda(s-1)}{\mu(s)}.
\end{equation*}
Here, $G$ serves as a normalization factor for the birth-death process. 
Applying the formula of $p(n)$ to the above equation \eqref{eq:new_bal}, we obtain
\begin{eqnarray*}
   \frac{1}{G} \prod_{s=1}^{n} \frac{\lambda(s-1)}{\mu(s)}  p_n(B_m)\left(\lambda_{m}+\mu_m \right) = \sum_{l \in \mathscr{C}_{n-1}}\frac{1}{G} \prod_{s=1}^{n-1} \frac{\lambda(s-1)}{\mu(s)}  p_{n-1}(B_l) \lambda_{lm} \nonumber\\
     +\sum_{l \in \mathscr{C}^m_{n+1}}\frac{1}{G} \prod_{s=1}^{n+1} \frac{\lambda(s-1)}{\mu(s)}  p_{n+1}(B_l) \mu_{lm}. 
\end{eqnarray*}
After canceling $G$ and terms in the product, the equation simplifies to
\begin{eqnarray*}
    p_n(B_m)\left(\lambda_{m}+\mu_m \right)=\sum_{l \in \mathscr{C}_{n-1}} p_{n-1}(B_l) \frac{\mu(n)}{\lambda(n-1)}\lambda_{lm}  + \sum_{l \in \mathscr{C}^m_{n+1}} p_{n+1}(B_l) \frac{\lambda(n)}{\mu(n+1)}\mu_{lm}. 
\end{eqnarray*}
By dividing both sides of this equation by $\lambda_m+\mu_m$, we obtain 
\begin{equation*}
    p_n(B_m)=\sum_{l \in \mathscr{C}_{n-1}} p_{n-1}(B_l) \frac{\mu(n)}{\lambda(n-1)}\frac{\lambda_{l m}}{\lambda_{m}+\mu_m} + \frac{\mu_{lm}}{\lambda_{m}+\mu_m}\sum_{l \in \mathscr{C}^m_{n+1}} p_{n+1}(B_l) \frac{\lambda(n)}{\mu(n+1)}.
\end{equation*}
We also require that the conditional probabilities sum to 1 for each layer $\mathscr{C}_n$, for $n=0,1,\ldots, N$, 
\begin{equation*}
    \sum_{m \in \mathscr{C}_n} p_n(B_m) = 1.
\end{equation*}
In the concluding step, we determine the probability distribution of steady states using the conditional probabilities $p_n(B_m)$. The relationship is expressed as
\begin{equation*}
    P\{B_m\} = P\{w(B_m), B_m\} = p(w(B_m))\cdot p_{w(B_m)}(B_m).
\end{equation*}
where we replace $n=w(B_m)$ in Equation \eqref{eq:conditional} in the above equation. 
\Halmos
\endproof

\subsection{Proof of Lemma \ref{lemma:conv&sum}}
\label{ssec:proof_lemma_hetero}
\proof{Proof.} 
We prove this lemma by induction. For $k=1$, we initialize $p_n^1(B_m) = p_n^{1,1}(B_m) =\frac{1}{{N \choose n}}$. This ensures $\sum_{m \in \mathscr{C}_n} p_n^{1,1}(B_m) = 1$ for all $n=0,1,\ldots, N$.
    
Assuming the statement holds for $k-1$, we proceed with the inductive hypothesis to verify the properties for $k$.
    Note that $\lambda_m = \sum_{l} \lambda_{ml}=\lambda$, which is valid for every state except when all units are busy. This is attributed to the sum of transition rates out of state $B_m$ upon arrival being equal to the total system arrival rate $\lambda$. Subsequently, we derive the following results:
    \begin{enumerate}[(i)]
        \item For $n=0$, we have only one state, $B_0$. Using Equation \eqref{eq:updateHeter}, we can simplify this as:
        \begin{eqnarray*} 
            p_0^{k,r}(B_0) \lambda = \sum_{l \in \mathscr{C}_{1}} p_{1}^{k-1}(B_l) \frac{\lambda^{k-1}(0)}{\mu^{k-1}(1)} \mu_{l0} = \frac{\lambda^{k-1}(0)}{\mu^{k-1}(1)} \sum_{l \in \mathscr{C}_{1}} p_{1}^{k-1}(B_l) \mu_{l0}. 
        \end{eqnarray*}
        By utilizing Equations \eqref{eq:lambdaHeter} and  \eqref{eq:muHeter}, we have:
        \begin{eqnarray*} 
            p_0^{k,r}(B_0) \lambda = \frac{\lambda^{k-1}(0)}{\mu^{k-1}(1)} \mu^{k-1}(1) = \lambda^{k-1}(0) = p_{0}^{k-1}\left(B_0\right) \lambda. 
        \end{eqnarray*}
        Thus, $p_0^{k,r}(B_0) = p_0^{k-1}(B_0)$ for all $r = 1,2,\ldots$. Given our assumption that the two statements are true for $k-1$, they are also valid for $k$ when $n=0$.
        \item Assume the statements hold for $n - 1$, with $p_{n-1}^{k,r}(B_m)$, $\lambda^{k,r}(n)$ converging to $p_{n-1}^{k}(B_m)$ and $\lambda^{k}(n)$ respectively. Then, using Equation \eqref{eq:updateHeter} (i.e., subtracting it by replacing $r$ with $r-1$) in Equation \eqref{eq:updateHeter}, we have: 
            \begin{eqnarray*}
            && \left(p_n^{k,r}(B_m) - p_n^{k,r-1}(B_m)\right)\left(\lambda+\mu_m \right) \nonumber = \sum_{l \in \mathscr{C}_{n-1}} p_{n-1}^{k}(B_l) \frac{\mu^{k,r-1}(n)}{\lambda^{k}(n-1)}\lambda_{lm} - \sum_{l \in \mathscr{C}_{n-1}} p_{n-1}^{k}(B_l) \frac{\mu^{k,r-2}(n)}{\lambda^{k}(n-1)}\lambda_{lm}.  
        \end{eqnarray*}
        By the induction hypothesis, $\lambda^{k}(n-1) = \lambda$ because of the normalization property. We define
        \begin{equation*} 
            \Delta_{n,m}^{k,r} = p_n^{k,r}(B_m) - p_n^{k,r-1}(B_m),
        \end{equation*} 
        and rewrite the above equation as:
        \begin{eqnarray*}
            \Delta_{n,m}^{k,r}\left(\lambda+\mu_m \right) 
            &=& \left(\frac{1}{\lambda} \sum_{l \in \mathscr{C}_{n-1}} p_{n-1}^{k}(B_l) \lambda_{lm} \right) \left(\mu^{k,r-1}(n) - \mu^{k,r-2}(n)\right) \nonumber \\
            &=& \left(\frac{1}{\lambda} \sum_{l \in \mathscr{C}_{n-1}} p_{n-1}^{k}(B_l) \lambda_{lm} \right) \left(\sum_{m \in \mathscr{C}_n}\mu_{m} \Delta_{n,m}^{k,r-1} \right), \label{eq:eq-Delta_m}
        \end{eqnarray*}
        where the second equality arises from Equation \eqref{eq:muHeter}. By applying the \textit{Triangle Inequality}, and taking absolute values on both sides, we obtain
        \begin{eqnarray*}\label{eq:ineq-triangle}
            \left(\lambda+\mu_m \right) \left| \Delta_{n,m}^{k,r} \right| \leq \left(\frac{1}{\lambda} \sum_{l \in \mathscr{C}_{n-1}} p_{n-1}^{k}(B_l) \lambda_{lm} \right) \left(\sum_{m \in \mathscr{C}_n}\mu_{m}  \left|\Delta_{n,m}^{k,r-1} \right| \right).
        \end{eqnarray*}

         Summing both sides over all $m \in \mathscr{C}_n$, we obtain
        \begin{eqnarray*} 
            \sum_{m \in \mathscr{C}_n} \left(\lambda+\mu_m \right) \left| \Delta_{n,m}^{k,r} \right| 
            & \leq & \left(\sum_{m \in \mathscr{C}_n} \mu_m\left|\Delta_{n,m}^{k,r-1} \right| \right) \sum_{m \in \mathscr{C}_n} \left(\frac{1}{\lambda} \sum_{l \in \mathscr{C}_{n-1}} p_{n-1}^{k}(B_l) \lambda_{lm} \right) \nonumber \\
            &=& \left(\sum_{m \in \mathscr{C}_n} \mu_m\left|\Delta_{n,m}^{k,r-1} \right| \right) \left(\frac{1}{\lambda} \sum_{l \in \mathscr{C}_{n-1}} p_{n-1}^{k}(B_l) \underbrace{\sum_{m \in \mathscr{C}_n} \lambda_{lm}}_{=\lambda} \right) \nonumber\\
            &=& \left(\sum_{m \in \mathscr{C}_n} \mu_m \left|\Delta_{n,m}^{k,r-1} \right| \right) \left(\frac{1}{\lambda} \lambda^{k}(n-1) \right) \nonumber\\
            &=& \sum_{m \in \mathscr{C}_n} \left(\lambda+\mu_m \right)\left|\Delta_{n,m}^{k,r-1} \right| - \lambda \sum_{m \in \mathscr{C}_n} \left|\Delta_{n,m}^{k,r-1} \right| .
        \end{eqnarray*}
        Define $A_r = \sum_{m \in \mathscr{C}_n} \left| \Delta_{n,m}^{k,r} \right|$. Then, for all $r$, we have
        \begin{eqnarray} 
            A_r &<& \sum_{m \in \mathscr{C}_n} \left(\lambda+\mu_m \right)\left| \Delta_{n,m}^{k,r} \right| \nonumber \\
            &\leq& \sum_{m \in \mathscr{C}_n} \left(\lambda+\mu_m \right)\left| \Delta_{n,m}^{k,r-1} \right| - \lambda A_{r-1} \nonumber\\
            &\leq& \ldots  \nonumber \\
            &\leq& \sum_{m \in \mathscr{C}_n}\left(\lambda+\mu_m \right) \left| \Delta_{n,m}^{k,1} \right| - \lambda \sum_{s=1}^{r-1}A_s. \nonumber
        \end{eqnarray}
        Using the induction hypothesis $\sum_{m \in \mathscr{C}_n} p_n^{k - 1}(B_m) = 1$, we have
        \begin{eqnarray*}
            \sum_{m \in \mathscr{C}_n} \left(\lambda+\mu_m \right)\left| \Delta_{n,m}^{k,1} \right| \leq \sum_{m \in \mathscr{C}_n} \left(\lambda+\mu_m \right)\left( p_n^{k,1}(B_m) + p_n^{k - 1}(B_m) \right) = D^{k,1}_n.
        \end{eqnarray*}
        At iteration $k$, $p_n^{k-1}(B_m)$ is fixed, and given all probabilities and transition rates before layer $n$ at step $k$, $p_n^{k,1}(B_m)$ is known, so $D^{k,1}_n$ is fixed. Because $A_s \ge 0$ for any $s \ge 1$, the sum $\sum_{s=1}^{r-1}A_s$ is nondecreasing and bounded above by $D^{k,1}_n / \lambda$. Therefore,
        \begin{eqnarray*} \label{eq:absCompare}
            \lim_{r \rightarrow \infty} \left| \Delta_{n,m}^{k,r} \right| \leq \lim_{r \rightarrow \infty} A_r = 0,
        \end{eqnarray*} 
        and $\left|p_n^{k,r}(B_m) - p_n^{k,r-1}(B_m) \right|$ converges to 0 as $r$ approaches infinity.

        To prove the normalization property, we revisit Equation \eqref{eq:updateHeter} and take the sum of both sides:
        \begin{eqnarray*}
            && \sum_{m \in \mathscr{C}_n} \left(\lambda+\mu_{m}\right) p_n^{k,r}(B_m) \nonumber \\
            &=& \sum_{m \in \mathscr{C}_n} \sum_{l \in \mathscr{C}_{n-1}} p_{n-1}^{k}(B_l) \frac{\mu^{k,r-1}(n)}{\lambda^{k}(n-1)}\lambda_{l m} + \sum_{m \in \mathscr{C}_n} \sum_{l \in \mathscr{C}^m_{n+1}} p_{n+1}^{k-1}(B_l) \frac{\lambda^{k-1}(n)}{\mu^{k-1}(n+1)}\mu_{lm} \nonumber \\
            &=& \left(\frac{\mu^{k,r-1}(n)}{\lambda^{k}(n-1)} \underbrace{\sum_{l \in \mathscr{C}_{n-1}} p_{n-1}^{k}(B_l) \sum_{m \in \mathscr{C}_n} \lambda_{lm}}_{=\lambda^{k}(n-1)} \right) + \left(\frac{\lambda^{k-1}(n)}{\mu^{k-1}(n+1)} \underbrace{\sum_{l \in \mathscr{C}_{n+1}} p_{n+1}^{k-1}(B_l) \sum_{m \in \mathscr{C}_n} \mu_{lm}}_{=\mu^{k-1}(n+1)} \right) \nonumber \\
            &=& \mu^{k,r-1}(n) + \lambda \nonumber \\
            &=& \sum_{m \in \mathscr{C}_n} \mu_{m} p_{n}^{k,r-1}\left(B_m\right) + \lambda.
        \end{eqnarray*} 
        Taking the limit of both sides of the equation, we have:
        \begin{eqnarray*}
             \lim_{r \rightarrow \infty} \sum_{m \in \mathscr{C}_n} \mu_{m}  p_n^{k,r}(B_m) + \lim_{r \rightarrow \infty} \sum_{m \in \mathscr{C}_n} \lambda p_n^{k,r}(B_m) = \lim_{r \rightarrow \infty} \sum_{m \in \mathscr{C}_n} \mu_m p_n^{k,r-1}(B_m) + \lambda. 
        \end{eqnarray*}
        Given our earlier proof that $p_n^{k,r}(B_m)$ converges, the first terms on both sides of the above equation are equal. Therefore, 
        \begin{eqnarray*}
            \lambda \lim_{r \rightarrow \infty} \sum_{m \in \mathscr{C}_n} p_n^{k,r}(B_m) = \lambda. 
       \end{eqnarray*}
       This implies
       \begin{eqnarray*}
            \lim_{r \rightarrow \infty} \sum_{m \in \mathscr{C}_n} p_n^{k,r}(B_m) = 1.
       \end{eqnarray*}
    \end{enumerate}

    We have thus demonstrated that these properties are valid for layer \( n \) at step \( k \), which concludes our proof.\Halmos
\endproof

\subsection{Proof of Theorem \ref{thm_convergence_heter}}
\label{ssec:proof_conv_heter}
\proof{Proof.}
    By Lemma \ref{lemma:conv&sum}, we can write Equation \eqref{eq:updateHeter} as 
    \begin{equation} 
    p_n^{k}(B_m)=\sum_{l \in \mathscr{C}_{n-1}} p_{n-1}^{k}(B_l) \frac{\mu^{k}(n)}{\lambda^{k}(n-1)}\frac{\lambda_{l m}}{\lambda_{m}+\mu_{m}} + \sum_{l \in \mathscr{C}^m_{n+1}} p_{n+1}^{k-1}(B_l) \frac{\lambda^{k-1}(n)}{\mu^{k-1}(n+1)}\frac{\mu_{lm}}{\lambda_{m}+\mu_{m}}.  \label{eq:updateHeter-k} 
\end{equation}
    By subtracting the value at the $k$-th iteration from the value at the $(k+1)$-th iteration, we obtain 
    \begin{eqnarray*}
        p_n^{k+1}(B_m)-p_n^k(B_m) &=& \sum_{l \in \mathscr{C}_{n-1}} \left( \frac{\mu^{k+1}(n)}{\lambda^{k+1}(n-1)} p_{n-1}^{k+1}(B_l) - \frac{\mu^{k}(n)}{\lambda^{k}(n-1)} p_{n-1}^{k}(B_l)\right)  \frac{\lambda_{lm}}{\lambda + \mu_m }  \nonumber\\
        &&+ \sum_{l \in \mathscr{C}^m_{n+1}} \left(\frac{\lambda^{k}(n)}{\mu^{k}(n+1)} p_{n+1}^{k}(B_l) - \frac{\lambda^{k-1}(n)}{\mu^{k-1}(n+1)}p_{n+1}^{k-1}(B_l)\right) \frac{\mu_{lm} }{\lambda + \mu_m }. 
    \end{eqnarray*}
    By taking the absolute values on both sides and applying the \textit{Triangle Inequality}, we obtain
    \begin{eqnarray*}
        \left| p_n^{k+1}(B_m)-p_n^k(B_m)\right| & \leq & \left| \sum_{l \in \mathscr{C}_{n-1}} \left( \frac{\mu^{k+1}(n)}{\lambda^{k+1}(n-1)} p_{n-1}^{k+1}(B_l) - \frac{\mu^{k}(n)}{\lambda^{k}(n-1)} p_{n-1}^{k}(B_l)\right)  \frac{\lambda_{lm}}{\lambda + \mu_m } \right|  \nonumber\\
        &&+ \left| \sum_{l \in \mathscr{C}^m_{n+1}} \left(\frac{\lambda^{k}(n)}{\mu^{k}(n+1)} p_{n+1}^{k}(B_l) - \frac{\lambda^{k-1}(n)}{\mu^{k-1}(n+1)}p_{n+1}^{k-1}(B_l)\right) \frac{\mu_{lm} }{\lambda + \mu_m }\right| \\
        & \leq &  \frac{1}{\lambda + \underline{\mu}_n } \left| \sum_{l \in \mathscr{C}_{n-1}} \left( \frac{\mu^{k+1}(n)}{\lambda^{k+1}(n-1)} p_{n-1}^{k+1}(B_l) - \frac{\mu^{k}(n)}{\lambda^{k}(n-1)} p_{n-1}^{k}(B_l)\right)\lambda_{lm} \right|  \nonumber\\
        &&+ \frac{1}{\lambda + \underline{\mu}_n } \left| \sum_{l \in \mathscr{C}^m_{n+1}} \left(\frac{\lambda^{k}(n)}{\mu^{k}(n+1)} p_{n+1}^{k}(B_l) - \frac{\lambda^{k-1}(n)}{\mu^{k-1}(n+1)}p_{n+1}^{k-1}(B_l)\right) \mu_{lm} \right|.
    \end{eqnarray*}
    Taking the sum on both sides, we obtain
    \begin{eqnarray*}
        \sum_{m \in \mathscr{C}_n} \left|p_n^{k+1}(B_m)-p_n^k(B_m)\right| \nonumber & \leq &  \frac{1}{\lambda + \underline{\mu}_n } \sum_{m \in \mathscr{C}_n} \left| \sum_{l \in \mathscr{C}_{n-1}} \left(\frac{\mu^{k+1}(n)}{\lambda^{k+1}(n-1)} p_{n-1}^{k+1}(B_l) - \frac{\mu^{k}(n)}{\lambda^{k}(n-1)} p_{n-1}^{k}(B_l)\right) \lambda_{lm} \right| \nonumber \\
        && + \frac{1}{\lambda + \underline{\mu}_n } \sum_{m \in \mathscr{C}_n} \left| \sum_{l \in \mathscr{C}^m_{n+1}} \left(\frac{\lambda^{k}(n)}{\mu^{k}(n+1)} p_{n+1}^{k}(B_l) - \frac{\lambda^{k-1}(n)}{\mu^{k-1}(n+1)}p_{n+1}^{k-1}(B_l)\right) \mu_{lm}\right| .
    \end{eqnarray*}
    According to Lemma \ref{lemma:conv&sum}, we have:
    \begin{eqnarray*}
        && \lambda^{k+1}(n-1) = \lambda^{k}(n-1) = \lambda^{k-1}(n) = \lambda^{k}(n) = \lambda; \\
        && \mu^{k+1}(n) \leq \overline{\mu}_n, \ \mu^{k}(n) \leq \overline{\mu}_n, \ \mu^{k}(n+1) \ge \underline{\mu}_{n+1}, \ \mu^{k-1}(n+1) \ge\underline{\mu}_{n+1}.
    \end{eqnarray*}
    We also define: 
    \begin{eqnarray*}
        \Delta^{k}_{n,m} = p_n^{k}(B_m)-p_n^{k-1}(B_m).
    \end{eqnarray*}

    Therefore, we have
    \begin{eqnarray*}
        \sum_{m \in \mathscr{C}_n} \left|\Delta^{k+1}_{n,m}\right|
        & \leq &  \frac{1}{\lambda + \underline{\mu}_n } \left(\sum_{m \in \mathscr{C}_n} \sum_{l \in \mathscr{C}_{n-1}} \frac{ \overline{\mu}_n}{\lambda} \lambda_{lm}  \left|\Delta^{k+1}_{n-1,l} \right| \nonumber +  \sum_{m \in \mathscr{C}_n} \sum_{l \in \mathscr{C}^m_{n+1}} \frac{\lambda}{\underline{\mu}_{n+1}} \mu_{lm}\left| \Delta^{k}_{n+1,l} \right| \right)\\
        &=& \frac{1}{\lambda + \underline{\mu}_n } \left(\sum_{l \in \mathscr{C}_{n-1}} \frac{ \overline{\mu}_n}{\lambda} \left|\Delta^{k+1}_{n-1,l} \right| \underbrace{\sum_{m \in \mathscr{C}_n} \lambda_{lm}}_{=\lambda} +  \sum_{l \in \mathscr{C}_{n+1}} \frac{\lambda}{\underline{\mu}_{n+1}} \left| \Delta^{k}_{n+1,l} \right| \underbrace{\sum_{m \in \mathscr{C}_n}\mu_{lm}}_{=\mu_l}\right)\\
        &=& \frac{1}{\lambda + \underline{\mu}_n } \left(\sum_{l \in \mathscr{C}_{n-1}} \overline{\mu}_n \left|\Delta^{k+1}_{n-1,l} \right| +  \sum_{l \in \mathscr{C}_{n+1}} \frac{\lambda \mu_l}{\underline{\mu}_{n+1}} \left| \Delta^{k}_{n+1,l} \right| \right) \\
        & \leq & \frac{1}{\lambda + \underline{\mu}_n } \left( \overline{\mu}_n \sum_{l \in \mathscr{C}_{n-1}} \left|\Delta^{k+1}_{n-1,l} \right| +  \frac{\lambda \overline{\mu}_{n+1}}{\underline{\mu}_{n+1}} \sum_{l \in \mathscr{C}_{n+1}} \left| \Delta^{k}_{n+1,l} \right| \right).
    \end{eqnarray*}
    Let $M_{k,n}=\sum_{l \in \mathscr{C}_n}\left|\Delta^{k}_{n,l}\right|$ and $\gamma_n  = \frac{ \overline{\mu}_{n}}{\underline{\mu}_{n}}$. Then, we have 
    \begin{eqnarray*}
        M_{k+1,n} \leq \frac{1}{\lambda + \underline{\mu}_n } \left( \overline{\mu}_n M_{k+1,n-1} +  \gamma_{n+1} \lambda M_{k,n+1} \right).
    \end{eqnarray*}
    For every step $k$, we have:
    \begin{eqnarray*}
        && M_{k,0} = \sum_{l \in \mathscr{C}_0}\left|p_0^{k}(B_l)-p_0^{k-1}(B_l)\right| = \left|p_0^{k}(B_0)-p_0^{k-1}(B_0)\right| = |1-1| = 0, \\
        && M_{k,N} = \sum_{l \in \mathscr{C}_N}\left|p_N^{k}(B_l)-p_N^{k-1}(B_l)\right| = \left|p_N^{k}(B_{2^N-1})-p_N^{k-1}(B_{2^N-1})\right| = |1-1| = 0 .
    \end{eqnarray*}
    Let $M_k = \sup_n M_{k,n}$,
    \begin{eqnarray*}
        M_{k+1,1} &\leq & \frac{1}{\lambda + \underline{\mu}_1 } \left(\overline{\mu}_1 M_{k+1,0} + \gamma_2\lambda M_{k,2} \right) = \frac{1}{\lambda + \underline{\mu}_1 } \gamma_2\lambda M_{k,2} \leq \frac{\lambda}{\lambda + \underline{\mu}_1 } \gamma_2  M_{k} \\
        M_{k+1,2} &\leq & \frac{1}{\lambda + \underline{\mu}_2 } \left(\overline{\mu}_2 M_{k+1,1} +  \gamma_3\lambda M_{k,3} \right) \leq \frac{\lambda}{\lambda + \underline{\mu}_2 } \left(\frac{\overline{\mu}_2}{\lambda + \underline{\mu}_1 } \gamma_2 +  \gamma_3 \right)M_{k} \\
        M_{k+1,3} &\leq & \frac{1}{\lambda + \underline{\mu}_3 } \left(\overline{\mu}_3 M_{k+1,2} +  \gamma_4 \lambda M_{k,4} \right) 
        \leq  \frac{\lambda}{\lambda + \underline{\mu}_3 } \left( \frac{\overline{\mu}_3}{\lambda + \underline{\mu}_2} \left(\frac{\overline{\mu}_2}{\lambda + \underline{\mu}_1} \gamma_2 +  \gamma_3\right)  +  \gamma_4 \right)M_{k} \\
        & = &  \frac{\lambda}{\lambda + \underline{\mu}_3 } \left(\frac{\overline{\mu}_3}{\lambda + \underline{\mu}_2} \frac{\overline{\mu}_2}{\lambda + \underline{\mu}_1} \gamma_2 + \frac{ \overline{\mu}_3}{\lambda + \underline{\mu}_2} \gamma_3 + \gamma_4\right)M_{k}\\
        M_{k+1,4} &\leq & \frac{1}{\lambda + \underline{\mu}_4} \left(\overline{\mu}_4 M_{k+1,3} +  \gamma_5 \lambda M_{k,5} \right) \\
        &\leq & \frac{\lambda}{\lambda + \underline{\mu}_4} \left( \frac{\overline{\mu}_4}{\lambda + \underline{\mu}_3 } \left(\frac{\overline{\mu}_3}{\lambda + \underline{\mu}_2} \frac{ \overline{\mu}_2}{\lambda + \underline{\mu}_1} \gamma_2 + \frac{\overline{\mu}_3}{\lambda + \underline{\mu}_2} \gamma_3 + \gamma_4 \right) +  \gamma_5 \right)M_{k} \\
        & = &  \frac{\lambda}{\lambda + \underline{\mu}_4} \left(\frac{\overline{\mu}_4}{\lambda + \underline{\mu}_3 } \frac{\overline{\mu}_3}{\lambda + \underline{\mu}_2} \frac{ \overline{\mu}_2}{\lambda + \underline{\mu}_1} \gamma_2 + \frac{\overline{\mu}_4}{\lambda + \underline{\mu}_3 }\frac{\overline{\mu}_3}{\lambda + \underline{\mu}_2} \gamma_3 + \frac{ \overline{\mu}_4}{\lambda + \underline{\mu}_3 }\gamma_4 + \gamma_5 \right)M_{k}.
    \end{eqnarray*}
    For notational convenience, we define:
    \begin{eqnarray*}
        && \phi_{n,q} =  \gamma_q \prod_{j=q}^{n} \frac{\overline{\mu}_j}{\lambda + \underline{\mu}_{j-1}} \ (2 \leq q \leq n)  \quad \mbox{and} \quad \phi_{n,n+1} = \gamma_{n+1}, \\
        && \Phi_n =  \frac{\lambda}{\lambda + \underline{\mu}_n} \sum_{q=2}^{n+1} \phi_{n,q}, \ (n \ge 2).
    \end{eqnarray*}
    We proceed by induction to show that for any $n \ge 2$, 
    \begin{eqnarray} \label{eq:MkInduct}
        M_{k+1,n} \leq \Phi_n M_k.
    \end{eqnarray}
    We have previously shown the case for $n=2$. Assuming the formula holds for $n=s$, for the case $n=s+1$, we have
    \begin{eqnarray*}
        M_{k+1,s+1} & \leq & \frac{1}{\lambda + \underline{\mu}_{s+1}} \left( \overline{\mu}_{s+1} \underbrace{M_{k+1,s}}_{\leq \Phi_{s} M_k} + \gamma_{s+2}\lambda \underbrace{M_{k,s+2}}_{\leq M_k} \right) \nonumber\\
        & \leq & \frac{\lambda}{\lambda + \underline{\mu}_{s+1}} \left( \overline{\mu}_{s+1} \left(\frac{1}{\lambda + \underline{\mu}_s} \sum_{q=2}^{s} \phi_{s,q} + \frac{1}{\lambda + \underline{\mu}_{s}} \gamma_{s+1} \right) + \gamma_{s+2}\right) M_k \label{eq:PhiInduct-1} \nonumber\\ 
        & = & \frac{\lambda}{\lambda + \underline{\mu}_{s+1}} \left( \frac{\overline{\mu}_{s+1}}{\lambda + \underline{\mu}_s} \sum_{q=2}^{s} \gamma_q \prod_{j=q}^{s} \frac{ \overline{\mu}_j}{\lambda + \underline{\mu}_{j-1}} + \frac{ \overline{\mu}_{s+1}}{\lambda + \underline{\mu}_{s}} \gamma_{s+1}+ \gamma_{s+2}\right) M_k \nonumber\\
        & = & \frac{\lambda}{\lambda + \underline{\mu}_{s+1}} \left( \underbrace{\sum_{q=2}^{s} \gamma_q \prod_{j=q}^{s+1} \frac{ \overline{\mu}_j}{\lambda + \underline{\mu}_{j-1}} + \frac{ \overline{\mu}_{s+1}}{\lambda + \underline{\mu}_{s}} \gamma_{s+1}}_{ = \sum_{q=2}^{s+1}\phi_{s+1,q}} + \gamma_{s+2}\right) M_k \nonumber\\
        & = & \Phi_{s+1} M_k .\label{eq:PhiInduct-2}
    \end{eqnarray*}
    Thus, we confirm that \eqref{eq:MkInduct} holds and is independent of $k$.
    Notice that $\Phi_N = \max_n \Phi_n < 1$, where this inequality follows from Assumption \ref{assu-phi}.
    Therefore, $M_{k+1}\leq \Phi_N M_k$ for each $k$ because
    \begin{equation}
        M_{k+1} = \sup_{n} M_{k+1,n} \leq \Phi_N M_k. \label{eq:heter M_k+1}
    \end{equation}
    Moreover, based on \eqref{eq:heter M_k+1}, 
    \begin{equation}
        M_{k+1} \leq \Phi_N M_k \leq \Phi^2_N M_{k-1}\leq \Phi_N ^k M_1.
    \end{equation}
    Given that $\Phi_N<1$ and is independent of $k$,
    \begin{equation*}
       \lim_{k\rightarrow \infty} M_{k+1} \leq \lim_{k\rightarrow \infty} \Phi_N ^k M_1 = 0.
    \end{equation*}
    Hence, for all $n$ and $k$, 
    \begin{eqnarray*}
       \left|p_n^{k+1}(B_l)-p_n^{k}(B_l)\right| \leq \sum_{l \in \mathscr{C}_n}\left|p_n^{k+1}(B_l)-p_n^{k}(B_l)\right| = M_{k+1,n}\leq \sup_n M_{k+1,n} = M_{k+1}.
    \end{eqnarray*}
    In conclusion, 
    \begin{equation*}
        \lim_{k\rightarrow \infty}\left|p_n^{k+1}(B_l)-p_n^{k}(B_l)\right| < \lim_{k\rightarrow \infty} M_{k+1} < \lim_{k\rightarrow \infty} \Phi_N ^k M_1 = 0.
    \end{equation*}
    Thus, for each $n$, $\left|p_n^{k+1}(B_l)-p_n^{k}(B_l)\right|$ converges to $0$ as $k$ approaches infinity at a geometric rate. 

    Next, we show that it converges to the original hypercube solution. Suppose the algorithm converges to $\bar{p}_n(B_m)$, with $\lim_{k\rightarrow \infty} p_n^k(B_m) = \bar{p}_n(B_m) \not= p_n(B_m)$. If the algorithm converges, there exists an $M>0$ and an infinitesimally small $\epsilon>0$ such that for all $m>M$,
\begin{eqnarray*}
   &&|p_n^m(B_m) - \bar{p}_n(B_m)| \leq \epsilon, \quad \mbox{and} \quad |p_n^{m-1}(B_m) - \bar{p}_n(B_m)| \leq \epsilon,
\end{eqnarray*}
for all $n=0,1,\ldots, N$ and all $m$. By proving this and noting that $p_n^m(B_m)$ and $p_n^{m-1}(B_m)$ follow Equation \eqref{eq:updateHeter-k}, we can infer that $\bar{p}_n(B_m)$ also satisfies Equation \eqref{eq:updateHeter-k} when $\epsilon \rightarrow 0$. We then have both $\bar{p}_n(B_m)$ and $p_n(B_m)$ as solutions. 

Given that all states in our model are positive recurrent and the continuous Markov process for the spatial queueing system is irreducible, there is a guarantee of a unique positive stationary distribution. Thus, the steady state distribution $P\{n, B_m\}$ corresponding to \eqref{eq:transformer_bal_eq_heter} and \eqref{eq:sumto1} is unique. We have $p_n(B_m) = P\{n, B_m\}/ p(n)$, which is a one-to-one mapping from state joint probability $P\{n, B_m\}$ to conditional probability $p_n(B_m)$. Consequently, we have $\bar{p}_n(B_m) = p_n(B_m)$. 
\Halmos
\endproof

\section{Extended Numerical Analysis}
\label{sec_extended_numerical}

This section provides detailed times, including both coefficient generation and computation, for St. Paul, MN, and Greenville County, SC. 

\subsection{Heterogeneous Case}
\label{ssec_extended_hetero}
In this section, we present the time for heterogeneous service rates for different cases, detailed in Table~\ref{tab_EC_heter_detail}. In these instances, our CPU algorithm (i.e., Algorithm~\ref{algo-BnDHeter}) is compared with a method that solves the problem using a sparse solver, given that both the original hypercube model and the modified model employing alternating hyperplane methods are infeasible for heterogeneous service rates.

For each scenario, we detail the computation times for both the sparse solver and our algorithm, with the latter emphasized in boldface. Note that NA indicates that results are omitted because the original hypercube solution becomes computationally prohibitive for systems with more than 20 units. From the tables, we note that the computation time for the sparse solver increases significantly as $N$ grows. For instance, the sparse solver requires more than 9,000 seconds to tackle a 15-unit problem and becomes impractical for scenarios with over 15 units. Conversely, our method is able to solve considerably larger problems efficiently. Specifically, for the 15-unit case, our method is 2,000 times faster compared to the sparse solver. 

  \begin{table}[htbp]
  \tiny
  \centering
  \renewcommand\arraystretch{1.3}
  \tabcolsep=3pt
  \caption{Detailed Time (s) for the Heterogeneous Case, St. Paul.}
  \begin{adjustbox}{center}
  \begin{threeparttable}
    \begin{tabular}{@{\extracolsep{2pt}}cccccccccccccccccccccc@{}}
      \hline
      & \multirow{2}{*}{$\rho$} & \multicolumn{20}{c}{\# Units ($N$)} \\
      \cline{3-22}
      & & \multicolumn{2}{c}{11} & \multicolumn{2}{c}{12} & \multicolumn{2}{c}{13} & \multicolumn{2}{c}{14} & \multicolumn{2}{c}{15} & \multicolumn{2}{c}{16} & \multicolumn{2}{c}{17} & \multicolumn{2}{c}{18} & \multicolumn{2}{c}{19} & \multicolumn{2}{c}{20} \\
      \hline
      \multirow{9}{*}{St. Paul} & 0.1 & 2.5 & $\boldsymbol{1.3}$ & 23.3 & $\boldsymbol{2.0}$ & 198.5 & $\boldsymbol{3.2}$ & 1120.1 & $\boldsymbol{6.1}$ & 9042.9 & $\boldsymbol{9.7}$ & NA & $\boldsymbol{35.2}$ & NA & $\boldsymbol{108.8}$ & NA & $\boldsymbol{265.1}$ & NA & $\boldsymbol{606.7}$ & NA & $\boldsymbol{1764.4}$ \\
      & 0.2 & 2.5 & $\boldsymbol{0.7}$ & 22.9 & $\boldsymbol{0.9}$ & 194.8 & $\boldsymbol{1.6}$ & 1086.0 & $\boldsymbol{3.2}$ & 9226.3 & $\boldsymbol{3.5}$ & NA & $\boldsymbol{10.6}$ & NA & $\boldsymbol{53.0}$ & NA & $\boldsymbol{123.9}$ & NA & $\boldsymbol{291.9}$ & NA & $\boldsymbol{645.2}$ \\
      & 0.3 & 2.4 & $\boldsymbol{0.7}$ & 22.9 & $\boldsymbol{1.0}$ & 194.4 & $\boldsymbol{1.9}$ & 1097.1 & $\boldsymbol{2.6}$ & 9012.0 & $\boldsymbol{4.1}$ & NA & $\boldsymbol{13.0}$ & NA & $\boldsymbol{63.8}$ & NA & $\boldsymbol{148.9}$ & NA & $\boldsymbol{342.1}$ & NA & $\boldsymbol{715.9}$ \\
      & 0.4 & 2.4 & $\boldsymbol{0.7}$ & 23.0 & $\boldsymbol{1.0}$ & 193.4 & $\boldsymbol{2.0}$ & 1088.6 & $\boldsymbol{2.5}$ & 9227.1 & $\boldsymbol{4.3}$ & NA & $\boldsymbol{14.0}$ & NA & $\boldsymbol{68.2}$ & NA & $\boldsymbol{155.5}$ & NA & $\boldsymbol{356.5}$ & NA & $\boldsymbol{718.0}$ \\
      & 0.5 & 2.5 & $\boldsymbol{0.8}$ & 23.0 & $\boldsymbol{1.0}$ & 193.9 & $\boldsymbol{1.9}$ & 1075.5 & $\boldsymbol{2.5}$ & 9310.8 & $\boldsymbol{4.2}$ & NA & $\boldsymbol{13.8}$ & NA & $\boldsymbol{69.7}$ & NA & $\boldsymbol{156.6}$ & NA & $\boldsymbol{360.4}$ & NA & $\boldsymbol{702.6}$ \\
      & 0.6 & 2.5 & $\boldsymbol{0.8}$ & 22.9 & $\boldsymbol{1.0}$ & 195.0 & $\boldsymbol{1.8}$ & 1086.7 & $\boldsymbol{2.4}$ & 9276.2 & $\boldsymbol{4.1}$ & NA & $\boldsymbol{13.4}$ & NA & $\boldsymbol{65.8}$ & NA & $\boldsymbol{150.1}$ & NA & $\boldsymbol{344.5}$ & NA & $\boldsymbol{674.6}$ \\
      & 0.7 & 2.5 & $\boldsymbol{0.8}$ & 22.8 & $\boldsymbol{0.9}$ & 194.1 & $\boldsymbol{1.7}$ & 1113.1 & $\boldsymbol{2.2}$ & 9185.2 & $\boldsymbol{3.9}$ & NA & $\boldsymbol{12.8}$ & NA & $\boldsymbol{63.7}$ & NA & $\boldsymbol{139.6}$ & NA & $\boldsymbol{327.2}$ & NA & $\boldsymbol{645.6}$ \\
      & 0.8 & 2.5 & $\boldsymbol{0.8}$ & 22.8 & $\boldsymbol{0.9}$ & 194.1 & $\boldsymbol{1.6}$ & 1087.1 & $\boldsymbol{2.1}$ & 9226.3 & $\boldsymbol{3.6}$ & NA & $\boldsymbol{11.9}$ & NA & $\boldsymbol{59.4}$ & NA & $\boldsymbol{133.1}$ & NA & $\boldsymbol{310.2}$ & NA & $\boldsymbol{601.8}$ \\
      & 0.9 & 2.5 & $\boldsymbol{0.8}$ & 22.8 & $\boldsymbol{0.9}$ & 193.6 & $\boldsymbol{1.4}$ & 1097.0 & $\boldsymbol{2.0}$ & 9246.6 & $\boldsymbol{3.4}$ & NA & $\boldsymbol{11.0}$ & NA & $\boldsymbol{53.9}$ & NA & $\boldsymbol{123.5}$ & NA & $\boldsymbol{283.4}$ & NA & $\boldsymbol{575.3}$ \\
      \hline
      \multirow{9}{*}{Greenville} & 0.1 & 2.5 & $\boldsymbol{1.5}$ & 24.5 & $\boldsymbol{1.7}$ & 177.3 & $\boldsymbol{3.2}$ & 1006.2 & $\boldsymbol{4.4}$ & 8713.4 & $\boldsymbol{8.7}$ & NA & $\boldsymbol{25.4}$ & NA & $\boldsymbol{76.4}$ & NA & $\boldsymbol{188.7}$ & NA & $\boldsymbol{575.8}$ & NA & $\boldsymbol{1152.4}$ \\
      & 0.2 & 2.5 & $\boldsymbol{0.7}$ & 23.5 & $\boldsymbol{0.8}$ & 172.4 & $\boldsymbol{1.1}$ & 1035.4 & $\boldsymbol{1.7}$ & 9165.5 & $\boldsymbol{5.3}$ & NA & $\boldsymbol{15.4}$ & NA & $\boldsymbol{40.4}$ & NA & $\boldsymbol{100.9}$ & NA & $\boldsymbol{253.0}$ & NA & $\boldsymbol{475.4}$ \\
      & 0.3 & 2.4 & $\boldsymbol{0.8}$ & 23.6 & $\boldsymbol{1.0}$ & 172.3 & $\boldsymbol{1.4}$ & 1024.3 & $\boldsymbol{2.0}$ & 8581.8 & $\boldsymbol{6.4}$ & NA & $\boldsymbol{18.8}$ & NA & $\boldsymbol{48.1}$ & NA & $\boldsymbol{121.5}$ & NA & $\boldsymbol{301.8}$ & NA & $\boldsymbol{622.0}$ \\
      & 0.4 & 2.4 & $\boldsymbol{0.9}$ & 23.9 & $\boldsymbol{1.1}$ & 174.3 & $\boldsymbol{1.5}$ & 1192.7 & $\boldsymbol{2.1}$ & 8883.9 & $\boldsymbol{6.9}$ & NA & $\boldsymbol{20.4}$ & NA & $\boldsymbol{52.6}$ & NA & $\boldsymbol{127.7}$ & NA & $\boldsymbol{338.0}$ & NA & $\boldsymbol{674.6}$ \\
      & 0.5 & 2.4 & $\boldsymbol{0.9}$ & 24.2 & $\boldsymbol{1.2}$ & 175.6 & $\boldsymbol{1.6}$ & 1019.8 & $\boldsymbol{2.2}$ & 8813.1 & $\boldsymbol{7.1}$ & NA & $\boldsymbol{21.2}$ & NA & $\boldsymbol{54.2}$ & NA & $\boldsymbol{130.8}$ & NA & $\boldsymbol{345.4}$ & NA & $\boldsymbol{690.6}$ \\
      & 0.6 & 2.5 & $\boldsymbol{0.9}$ & 23.5 & $\boldsymbol{1.1}$ & 175.9 & $\boldsymbol{1.6}$ & 1022.8 & $\boldsymbol{2.2}$ & 9184.3 & $\boldsymbol{6.9}$ & NA & $\boldsymbol{21.1}$ & NA & $\boldsymbol{54.3}$ & NA & $\boldsymbol{127.7}$ & NA & $\boldsymbol{327.9}$ & NA & $\boldsymbol{676.0}$ \\
      & 0.7 & 2.4 & $\boldsymbol{0.9}$ & 23.5 & $\boldsymbol{1.1}$ & 175.7 & $\boldsymbol{1.5}$ & 1021.3 & $\boldsymbol{2.1}$ & 8521.4 & $\boldsymbol{6.9}$ & NA & $\boldsymbol{20.8}$ & NA & $\boldsymbol{52.7}$ & NA & $\boldsymbol{124.7}$ & NA & $\boldsymbol{308.4}$ & NA & $\boldsymbol{632.3}$ \\
      & 0.8 & 2.4 & $\boldsymbol{0.8}$ & 24.7 & $\boldsymbol{1.1}$ & 172.3 & $\boldsymbol{1.4}$ & 1132.7 & $\boldsymbol{2.0}$ & 8577.4 & $\boldsymbol{6.7}$ & NA & $\boldsymbol{20.0}$ & NA & $\boldsymbol{51.1}$ & NA & $\boldsymbol{121.4}$ & NA & $\boldsymbol{293.0}$ & NA & $\boldsymbol{603.9}$ \\
      & 0.9 & 2.4 & $\boldsymbol{0.8}$ & 23.6 & $\boldsymbol{1.0}$ & 181.7 & $\boldsymbol{1.4}$ & 1341.3 & $\boldsymbol{1.9}$ & 8619.7 & $\boldsymbol{6.2}$ & NA & $\boldsymbol{19.5}$ & NA & $\boldsymbol{49.6}$ & NA & $\boldsymbol{116.2}$ & NA & $\boldsymbol{278.0}$ & NA & $\boldsymbol{560.1}$ \\
      \hline
  \end{tabular}
  \begin{tablenotes}
        \footnotesize
        \vspace{-0.3cm}
        \item \textit{Note.} Regular: computation time of the sparse solver. Bold: computation time of CPU algorithm.  NA: results not included. 
    \end{tablenotes}
    \end{threeparttable}
  \end{adjustbox}
  \label{tab_EC_heter_detail}
  \vspace{-0.3cm}
\end{table}

\subsection{Homogeneous Case}
\label{ssec_extended_homo}

This section evaluates the total time for cases with homogeneous service rates, using the CPU algorithm and the alternating hyperplane method by \cite{larson74hypercube}. 
We find that our coefficient generation operates almost twice as fast as the original TOUR method, and the computation time using our algorithm is more than 97\% faster than the computation time using the alternating hyperplane method. To make a fairer comparison, we improved the alternating hyperplane method by using current computer methodologies. We implemented sparse matrix manipulation while adhering to Larson's method in the other portions of his procedure. We then compared our algorithm with the enhanced version of Larson's method.
Our method reduces the total time by about 50\% compared to the enhanced Larson method. 

In the  Tables \ref{tab_homo_detail} and \ref{tab_greenville_homo_detail}, CPU represents our method, and AH denotes the original alternating hyperplane solution. The original version used point updating with for-loops. We also modified the original code to enhance its speed, labeling it as MAH, which stands for the modified solution of the original hypercube method. In this modified solution, we first convert the storage vector into a sparse matrix and then replace the for-loops with sparse matrix multiplications. For each case, we report the times for generating the transition rates (highlighted in boldface) and for computation. The original alternating hyperplane solution requires a significant amount of time for systems larger than 20 units, \ins{taking over 20 minutes for the 21-unit case and more than four hours for the 25-unit case}. Therefore, we did not include these results in the table.
    
\begin{table}[htbp]
\centering
\rotatebox{270}{
\begin{minipage}{\textheight}
\renewcommand\arraystretch{1.8}
\tabcolsep=1pt
\caption{Detailed Time (s) for the Homogeneous Case, St. Paul.}
\tiny
\begin{adjustbox}{center}
\begin{threeparttable}
\begin{tabular}{@{\extracolsep{2pt}}cccccccccccccccccccccccccccccccc@{}}
    \hline
    \multirow{2}{*}{} & \multirow{2}{*}{$\rho$} & \multicolumn{30}{c}{\# Units ($N$)} \\
    \cline{3-32}
    & & \multicolumn{2}{c}{11} & \multicolumn{2}{c}{12} & \multicolumn{2}{c}{13} & \multicolumn{2}{c}{14} & \multicolumn{2}{c}{15} & \multicolumn{2}{c}{16} & \multicolumn{2}{c}{17} & \multicolumn{2}{c}{18} & \multicolumn{2}{c}{19} & \multicolumn{2}{c}{20} & \multicolumn{2}{c}{21} & \multicolumn{2}{c}{22} & \multicolumn{2}{c}{23} & \multicolumn{2}{c}{24} & \multicolumn{2}{c}{25} \\
    \hline
    \multirow{9}{*}{CPU} & 0.1 & $\boldsymbol{0.2}$ & 0.0 & $\boldsymbol{0.4}$ & 0.1 & $\boldsymbol{0.9}$ & 0.1 & $\boldsymbol{1.8}$ & 0.2 & $\boldsymbol{3.6}$ & 0.3 & $\boldsymbol{7.4}$ & 0.5 & $\boldsymbol{15.1}$ & 0.9 & $\boldsymbol{30.4}$ & 2.2 & $\boldsymbol{62.1}$ & 4.6 & $\boldsymbol{126.6}$ & 11.8 & $\boldsymbol{257.0}$ & 29.5 & $\boldsymbol{522.2}$ & 63.7 & $\boldsymbol{1064.0}$ & 143.4 & $\boldsymbol{2111.5}$ & 324.3 & $\boldsymbol{4374.1}$ & 692.6 \\
    & 0.2 & $\boldsymbol{0.2}$ & 0.1 & $\boldsymbol{0.4}$ & 0.1 & $\boldsymbol{0.9}$ & 0.2 & $\boldsymbol{1.8}$ & 0.2 & $\boldsymbol{3.7}$ & 0.4 & $\boldsymbol{7.4}$ & 0.7 & $\boldsymbol{15.1}$ & 1.4 & $\boldsymbol{31.0}$ & 3.1 & $\boldsymbol{62.5}$ & 6.7 & $\boldsymbol{126.3}$ & 16.2 & $\boldsymbol{254.3}$ & 40.8 & $\boldsymbol{519.0}$ & 87.4 & $\boldsymbol{1060.8}$ & 198.9 & $\boldsymbol{2155.2}$ & 456.2 & $\boldsymbol{4326.9}$ & 997.2 \\
    & 0.3 & $\boldsymbol{0.2}$ & 0.1 & $\boldsymbol{0.4}$ & 0.1 & $\boldsymbol{0.9}$ & 0.2 & $\boldsymbol{1.8}$ & 0.3 & $\boldsymbol{3.6}$ & 0.4 & $\boldsymbol{7.4}$ & 0.8 & $\boldsymbol{15.3}$ & 1.7 & $\boldsymbol{30.4}$ & 3.6 & $\boldsymbol{62.1}$ & 7.9 & $\boldsymbol{126.3}$ & 18.1 & $\boldsymbol{255.6}$ & 45.1 & $\boldsymbol{520.4}$ & 98.2 & $\boldsymbol{1052.7}$ & 228.9 & $\boldsymbol{2117.4}$ & 525.4 & $\boldsymbol{4383.1}$ & 1166.9 \\
    & 0.4 & $\boldsymbol{0.2}$ & 0.1 & $\boldsymbol{0.4}$ & 0.1 & $\boldsymbol{0.9}$ & 0.2 & $\boldsymbol{1.7}$ & 0.3 & $\boldsymbol{3.6}$ & 0.5 & $\boldsymbol{7.4}$ & 0.9 & $\boldsymbol{15.3}$ & 1.8 & $\boldsymbol{30.8}$ & 3.9 & $\boldsymbol{62.1}$ & 8.3 & $\boldsymbol{126.4}$ & 18.4 & $\boldsymbol{255.9}$ & 45.1 & $\boldsymbol{519.7}$ & 98.3 & $\boldsymbol{1060.7}$ & 233.1 & $\boldsymbol{2143.8}$ & 539.3 & $\boldsymbol{4366.4}$ & 1215.6 \\
    & 0.5 & $\boldsymbol{0.2}$ & 0.1 & $\boldsymbol{0.4}$ & 0.1 & $\boldsymbol{0.9}$ & 0.2 & $\boldsymbol{1.7}$ & 0.3 & $\boldsymbol{3.6}$ & 0.5 & $\boldsymbol{7.5}$ & 0.9 & $\boldsymbol{15.2}$ & 1.9 & $\boldsymbol{30.5}$ & 4.0 & $\boldsymbol{62.1}$ & 8.4 & $\boldsymbol{126.4}$ & 18.1 & $\boldsymbol{255.4}$ & 44.2 & $\boldsymbol{525.1}$ & 98.4 & $\boldsymbol{1061.1}$ & 226.1 & $\boldsymbol{2130.7}$ & 523.8 & $\boldsymbol{4354.4}$ & 1187.6 \\
    & 0.6 & $\boldsymbol{0.2}$ & 0.1 & $\boldsymbol{0.4}$ & 0.1 & $\boldsymbol{0.9}$ & 0.2 & $\boldsymbol{1.7}$ & 0.3 & $\boldsymbol{3.6}$ & 0.4 & $\boldsymbol{7.4}$ & 0.9 & $\boldsymbol{15.1}$ & 1.8 & $\boldsymbol{30.6}$ & 3.8 & $\boldsymbol{61.9}$ & 8.1 & $\boldsymbol{126.7}$ & 17.3 & $\boldsymbol{255.7}$ & 42.5 & $\boldsymbol{521.9}$ & 94.7 & $\boldsymbol{1079.2}$ & 214.9 & $\boldsymbol{2133.5}$ & 499.4 & $\boldsymbol{4370.3}$ & 1113.7 \\
    & 0.7 & $\boldsymbol{0.2}$ & 0.1 & $\boldsymbol{0.4}$ & 0.1 & $\boldsymbol{0.9}$ & 0.2 & $\boldsymbol{1.8}$ & 0.2 & $\boldsymbol{3.6}$ & 0.4 & $\boldsymbol{7.3}$ & 0.8 & $\boldsymbol{15.2}$ & 1.7 & $\boldsymbol{30.4}$ & 3.6 & $\boldsymbol{62.9}$ & 7.7 & $\boldsymbol{126.4}$ & 16.6 & $\boldsymbol{254.9}$ & 40.8 & $\boldsymbol{513.9}$ & 88.1 & $\boldsymbol{1065.7}$ & 203.0 & $\boldsymbol{2132.3}$ & 465.5 & $\boldsymbol{4260.4}$ & 1040.3 \\
    & 0.8 & $\boldsymbol{0.2}$ & 0.1 & $\boldsymbol{0.4}$ & 0.1 & $\boldsymbol{0.9}$ & 0.2 & $\boldsymbol{1.7}$ & 0.2 & $\boldsymbol{3.6}$ & 0.4 & $\boldsymbol{7.4}$ & 0.7 & $\boldsymbol{15.3}$ & 1.6 & $\boldsymbol{31.0}$ & 3.3 & $\boldsymbol{62.0}$ & 7.2 & $\boldsymbol{125.5}$ & 15.4 & $\boldsymbol{257.3}$ & 38.2 & $\boldsymbol{503.4}$ & 83.4 & $\boldsymbol{1065.8}$ & 191.5 & $\boldsymbol{2086.9}$ & 431.5 & $\boldsymbol{4386.2}$ & 941.7 \\
    & 0.9 & $\boldsymbol{0.2}$ & 0.1 & $\boldsymbol{0.4}$ & 0.1 & $\boldsymbol{0.9}$ & 0.2 & $\boldsymbol{1.7}$ & 0.2 & $\boldsymbol{3.7}$ & 0.4 & $\boldsymbol{7.4}$ & 0.7 & $\boldsymbol{15.1}$ & 1.5 & $\boldsymbol{30.6}$ & 3.2 & $\boldsymbol{62.0}$ & 6.8 & $\boldsymbol{126.4}$ & 14.7 & $\boldsymbol{256.8}$ & 35.3 & $\boldsymbol{521.8}$ & 78.3 & $\boldsymbol{1067.9}$ & 179.4 & $\boldsymbol{2112.3}$ & 406.9 & $\boldsymbol{4251.2}$ & 905.5 \\
    \hline
    \multirow{9}{*}{AH} & 0.1 & $\boldsymbol{0.5}$ & 0.2 & $\boldsymbol{1.0}$ & 0.7 & $\boldsymbol{2.1}$ & 1.7 & $\boldsymbol{4.2}$ & 3.9 & $\boldsymbol{8.4}$ & 8.6 & $\boldsymbol{16.6}$ & 20.0 & $\boldsymbol{32.5}$ & 44.0 & $\boldsymbol{65.7}$ & 93.6 & $\boldsymbol{133.6}$ & 215.2 & $\boldsymbol{267.6}$ & 510.9 & NA & NA & NA & NA & NA & NA & NA & NA & NA & NA \\
    & 0.2 & $\boldsymbol{0.5}$ & 0.3 & $\boldsymbol{1.0}$ & 0.8 & $\boldsymbol{2.1}$ & 2.3 & $\boldsymbol{4.2}$ & 5.1 & $\boldsymbol{8.2}$ & 11.3 & $\boldsymbol{16.6}$ & 26.9 & $\boldsymbol{32.6}$ & 59.4 & $\boldsymbol{65.8}$ & 123.4 & $\boldsymbol{133.7}$ & 281.7 & $\boldsymbol{269.2}$ & 666.1 & NA & NA & NA & NA & NA & NA & NA & NA & NA & NA \\
    & 0.3 & $\boldsymbol{0.5}$ & 0.3 & $\boldsymbol{1.0}$ & 0.9 & $\boldsymbol{2.1}$ & 2.5 & $\boldsymbol{4.2}$ & 5.6 & $\boldsymbol{8.3}$ & 13.0 & $\boldsymbol{16.2}$ & 30.7 & $\boldsymbol{32.5}$ & 69.1 & $\boldsymbol{65.9}$ & 143.6 & $\boldsymbol{133.1}$ & 319.1 & $\boldsymbol{269.2}$ & 726.3 & NA & NA & NA & NA & NA & NA & NA & NA & NA & NA \\
    & 0.4 & $\boldsymbol{0.5}$ & 0.3 & $\boldsymbol{1.0}$ & 0.9 & $\boldsymbol{2.1}$ & 2.6 & $\boldsymbol{4.2}$ & 5.6 & $\boldsymbol{8.4}$ & 12.9 & $\boldsymbol{16.3}$ & 32.4 & $\boldsymbol{32.6}$ & 72.8 & $\boldsymbol{65.8}$ & 148.1 & $\boldsymbol{134.6}$ & 337.6 & $\boldsymbol{268.5}$ & 727.0 & NA & NA & NA & NA & NA & NA & NA & NA & NA & NA \\
    & 0.5 & $\boldsymbol{0.5}$ & 0.4 & $\boldsymbol{1.0}$ & 0.9 & $\boldsymbol{2.1}$ & 2.6 & $\boldsymbol{4.2}$ & 5.4 & $\boldsymbol{8.4}$ & 12.8 & $\boldsymbol{16.3}$ & 31.7 & $\boldsymbol{32.6}$ & 70.8 & $\boldsymbol{65.7}$ & 148.1 & $\boldsymbol{132.8}$ & 327.0 & $\boldsymbol{269.4}$ & 712.1 & NA & NA & NA & NA & NA & NA & NA & NA & NA & NA \\
    & 0.6 & $\boldsymbol{0.5}$ & 0.3 & $\boldsymbol{1.0}$ & 0.8 & $\boldsymbol{2.1}$ & 2.5 & $\boldsymbol{4.3}$ & 5.3 & $\boldsymbol{8.4}$ & 12.2 & $\boldsymbol{16.4}$ & 29.4 & $\boldsymbol{32.5}$ & 66.4 & $\boldsymbol{65.9}$ & 137.2 & $\boldsymbol{133.3}$ & 316.1 & $\boldsymbol{268.1}$ & 663.7 & NA & NA & NA & NA & NA & NA & NA & NA & NA & NA \\
    & 0.7 & $\boldsymbol{0.5}$ & 0.3 & $\boldsymbol{1.0}$ & 0.8 & $\boldsymbol{2.1}$ & 2.3 & $\boldsymbol{4.2}$ & 5.0 & $\boldsymbol{8.4}$ & 11.4 & $\boldsymbol{16.3}$ & 27.9 & $\boldsymbol{32.1}$ & 62.3 & $\boldsymbol{66.0}$ & 130.6 & $\boldsymbol{133.7}$ & 288.9 & $\boldsymbol{269.2}$ & 609.4 & NA & NA & NA & NA & NA & NA & NA & NA & NA & NA \\
    & 0.8 & $\boldsymbol{0.5}$ & 0.3 & $\boldsymbol{1.0}$ & 0.7 & $\boldsymbol{2.1}$ & 2.1 & $\boldsymbol{4.2}$ & 4.6 & $\boldsymbol{8.5}$ & 10.4 & $\boldsymbol{16.5}$ & 25.4 & $\boldsymbol{32.5}$ & 56.9 & $\boldsymbol{66.1}$ & 121.4 & $\boldsymbol{133.7}$ & 269.6 & $\boldsymbol{268.8}$ & 572.6 & NA & NA & NA & NA & NA & NA & NA & NA & NA & NA \\
    & 0.9 & $\boldsymbol{0.5}$ & 0.3 & $\boldsymbol{1.0}$ & 0.7 & $\boldsymbol{2.1}$ & 2.0 & $\boldsymbol{4.3}$ & 4.2 & $\boldsymbol{8.4}$ & 9.7 & $\boldsymbol{16.5}$ & 23.1 & $\boldsymbol{32.4}$ & 52.0 & $\boldsymbol{65.9}$ & 110.6 & $\boldsymbol{132.5}$ & 245.5 & $\boldsymbol{263.7}$ & 536.2 & NA & NA & NA & NA & NA & NA & NA & NA & NA & NA \\
    \hline
    \multirow{9}{*}{MAH} & 0.1 & $\boldsymbol{0.5}$ & 0.0 & $\boldsymbol{1.0}$ & 0.0 & $\boldsymbol{2.1}$ & 0.1 & $\boldsymbol{4.3}$ & 0.1 & $\boldsymbol{8.4}$ & 0.3 & $\boldsymbol{16.6}$ & 0.7 & $\boldsymbol{32.4}$ & 1.4 & $\boldsymbol{66.9}$ & 2.9 & $\boldsymbol{133.8}$ & 6.3 & $\boldsymbol{262.3}$ & 14.0 & $\boldsymbol{535.5}$  & 30.2   & $\boldsymbol{1075.0}$  & 60.2   & $\boldsymbol{2107.0}$  & 146.5   & $\boldsymbol{4271.5}$  & 300.8   & $\boldsymbol{8456.6}$  & 725.9 \\
    & 0.2 & $\boldsymbol{0.5}$ & 0.0 & $\boldsymbol{1.0}$ & 0.0 & $\boldsymbol{2.1}$ & 0.1 & $\boldsymbol{4.2}$ & 0.2 & $\boldsymbol{8.4}$ & 0.3 & $\boldsymbol{16.5}$ & 0.7 & $\boldsymbol{32.6}$ & 1.5 & $\boldsymbol{66.6}$ & 3.6 & $\boldsymbol{133.6}$ & 7.5 & $\boldsymbol{263.3}$ & 17.0 & $\boldsymbol{533.0}$  & 32.7   & $\boldsymbol{1070.6}$  & 74.7   & $\boldsymbol{2105.0}$  & 177.2   & $\boldsymbol{4246.7}$  & 373.6   & $\boldsymbol{8554.4}$  & 857.0 \\
    & 0.3 & $\boldsymbol{0.5}$ & 0.0 & $\boldsymbol{1.0}$ & 0.0 & $\boldsymbol{2.1}$ & 0.1 & $\boldsymbol{4.2}$ & 0.2 & $\boldsymbol{8.3}$ & 0.3 & $\boldsymbol{16.5}$ & 0.7 & $\boldsymbol{32.2}$ & 1.6 & $\boldsymbol{66.5}$ & 4.0 & $\boldsymbol{133.7}$ & 8.1 & $\boldsymbol{262.0}$ & 18.0 & $\boldsymbol{531.8}$  & 36.5   & $\boldsymbol{1072.3}$  & 83.1   & $\boldsymbol{2111.5}$  & 188.4   & $\boldsymbol{4233.8}$  & 415.8   & $\boldsymbol{8460.6}$  & 931.1 \\
    & 0.4 & $\boldsymbol{0.5}$ & 0.0 & $\boldsymbol{1.0}$ & 0.0 & $\boldsymbol{2.1}$ & 0.1 & $\boldsymbol{4.2}$ & 0.2 & $\boldsymbol{8.4}$ & 0.3 & $\boldsymbol{16.5}$ & 0.8 & $\boldsymbol{32.5}$ & 1.7 & $\boldsymbol{66.7}$ & 4.2 & $\boldsymbol{133.8}$ & 8.4 & $\boldsymbol{264.2}$ & 18.1 & $\boldsymbol{534.0}$  & 37.5   & $\boldsymbol{1070.5}$  & 85.3   & $\boldsymbol{2116.7}$  & 189.6   & $\boldsymbol{4245.9}$  & 426.4   & $\boldsymbol{8477.9}$  & 967.6 \\
    & 0.5 & $\boldsymbol{0.5}$ & 0.0 & $\boldsymbol{1.0}$ & 0.0 & $\boldsymbol{2.1}$ & 0.1 & $\boldsymbol{4.2}$ & 0.2 & $\boldsymbol{8.4}$ & 0.3 & $\boldsymbol{16.5}$ & 0.8 & $\boldsymbol{32.7}$ & 1.7 & $\boldsymbol{66.3}$ & 4.1 & $\boldsymbol{134.7}$ & 8.3 & $\boldsymbol{264.9}$ & 17.9 & $\boldsymbol{530.6}$  & 37.4   & $\boldsymbol{1080.4}$  & 84.6   & $\boldsymbol{2121.2}$  & 187.1   & $\boldsymbol{4242.7}$  & 423.2   & $\boldsymbol{8451.6}$  & 957.9 \\
    & 0.6 & $\boldsymbol{0.5}$ & 0.0 & $\boldsymbol{1.0}$ & 0.0 & $\boldsymbol{2.1}$ & 0.1 & $\boldsymbol{4.2}$ & 0.2 & $\boldsymbol{8.3}$ & 0.3 & $\boldsymbol{16.4}$ & 0.7 & $\boldsymbol{32.9}$ & 1.6 & $\boldsymbol{66.4}$ & 3.9 & $\boldsymbol{134.3}$ & 8.1 & $\boldsymbol{263.1}$ & 16.9 & $\boldsymbol{527.1}$  & 36.1   & $\boldsymbol{1077.2}$  & 79.8   & $\boldsymbol{2101.1}$  & 181.1   & $\boldsymbol{4200.3}$  & 403.1   & $\boldsymbol{8520.9}$  & 903.6 \\
    & 0.7 & $\boldsymbol{0.5}$ & 0.0 & $\boldsymbol{1.0}$ & 0.0 & $\boldsymbol{2.1}$ & 0.1 & $\boldsymbol{4.3}$ & 0.2 & $\boldsymbol{8.3}$ & 0.3 & $\boldsymbol{16.4}$ & 0.7 & $\boldsymbol{32.8}$ & 1.5 & $\boldsymbol{67.1}$ & 3.8 & $\boldsymbol{134.0}$ & 7.6 & $\boldsymbol{262.9}$ & 16.1 & $\boldsymbol{526.1}$  & 33.9   & $\boldsymbol{1075.1}$  & 76.3   & $\boldsymbol{2118.9}$  & 172.6   & $\boldsymbol{4216.5}$  & 389.7   & $\boldsymbol{8497.2}$  & 863.9 \\
    & 0.8 & $\boldsymbol{0.5}$ & 0.0 & $\boldsymbol{1.0}$ & 0.0 & $\boldsymbol{2.1}$ & 0.1 & $\boldsymbol{4.3}$ & 0.1 & $\boldsymbol{8.3}$ & 0.3 & $\boldsymbol{16.4}$ & 0.7 & $\boldsymbol{32.8}$ & 1.4 & $\boldsymbol{66.4}$ & 3.6 & $\boldsymbol{134.4}$ & 7.2 & $\boldsymbol{263.7}$ & 15.3 & $\boldsymbol{526.7}$  & 32.4   & $\boldsymbol{1077.9}$  & 72.3   & $\boldsymbol{2092.2}$  & 161.6   & $\boldsymbol{4196.4}$  & 367.1   & $\boldsymbol{8490.5}$  & 819.0 \\
    & 0.9 & $\boldsymbol{0.5}$ & 0.0 & $\boldsymbol{1.0}$ & 0.0 & $\boldsymbol{2.1}$ & 0.1 & $\boldsymbol{4.2}$ & 0.1 & $\boldsymbol{8.4}$ & 0.3 & $\boldsymbol{16.4}$ & 0.6 & $\boldsymbol{32.8}$ & 1.4 & $\boldsymbol{66.5}$ & 3.4 & $\boldsymbol{134.7}$ & 7.0 & $\boldsymbol{264.3}$ & 14.7 & $\boldsymbol{527.1}$  & 31.2   & $\boldsymbol{1080.7}$  & 69.6   & $\boldsymbol{2112.5}$  & 153.1   & $\boldsymbol{4230.6}$  & 344.3   & $\boldsymbol{8481.0}$  & 788.0 \\
    \hline
\end{tabular} 
\begin{tablenotes}
    \footnotesize
    \vspace{-0.3cm}
    \item \textit{Note.} Bold: coefficient generation time. \ins{Regular}: computation time. NA: results not included. CPU: our algorithm. AH: alternating hyperplane algorithm. MAH: modified alternating hyperplane algorithm. 
\end{tablenotes}
\end{threeparttable}
\end{adjustbox}
\label{tab_homo_detail}
\end{minipage}
}
\end{table}


\begin{table}[htbp]
    \centering 
    \rotatebox{270}{
    \begin{minipage}{\textheight}
    \caption{Detailed Time (s) for the Homogeneous Case, Greenville County.}
    \tiny
    \renewcommand\arraystretch{1.8}
    \tabcolsep=1pt
    \begin{adjustbox}{center}
    \begin{threeparttable}
        \begin{tabular}{@{\extracolsep{1.6pt}}cccccccccccccccccccccccccccccccc@{}}
        \hline
        \multirow{2}{*}{ } & \multirow{2}{*}{$\rho$} & \multicolumn{30}{c}{\# Units ($N$)} \\
        \cline{3-32}
        & & \multicolumn{2}{c}{11} & \multicolumn{2}{c}{12} & \multicolumn{2}{c}{13} & \multicolumn{2}{c}{14} & \multicolumn{2}{c}{15} & \multicolumn{2}{c}{16} & \multicolumn{2}{c}{17} & \multicolumn{2}{c}{18} & \multicolumn{2}{c}{19} & \multicolumn{2}{c}{20} & \multicolumn{2}{c}{21} & \multicolumn{2}{c}{22} & \multicolumn{2}{c}{23} & \multicolumn{2}{c}{24} & \multicolumn{2}{c}{25}\\
        \hline
        \multirow{9}{*}{CPU} & 0.1 & $\boldsymbol{0.3}$ & 0.1 & $\boldsymbol{0.6}$ & 0.1 & $\boldsymbol{1.2}$ & 0.1 & $\boldsymbol{2.4}$ & 0.1 & $\boldsymbol{4.8}$ & 0.2 & $\boldsymbol{9.7}$ & 0.3 & $\boldsymbol{20.1}$ & 0.7 & $\boldsymbol{40.5}$ & 1.5 & $\boldsymbol{82.7}$ & 3.6 & $\boldsymbol{166.4}$ & 8.9 & $\boldsymbol{339.4}$ & 19.9 & $\boldsymbol{687.5}$ & 43.7 & $\boldsymbol{1405.0}$ & 108.7 & $\boldsymbol{2859.1}$ & 246.1 & $\boldsymbol{5690.8}$ & 526.1\\
        & 0.2 & $\boldsymbol{0.3}$ & 0.1 & $\boldsymbol{0.6}$ & 0.1 & $\boldsymbol{1.2}$ & 0.1 & $\boldsymbol{2.4}$ & 0.2 & $\boldsymbol{4.9}$ & 0.3 & $\boldsymbol{9.9}$ & 0.5 & $\boldsymbol{20.0}$ & 1.0 & $\boldsymbol{40.5}$ & 2.2 & $\boldsymbol{82.4}$ & 5.4 & $\boldsymbol{166.8}$ & 13.4 & $\boldsymbol{336.4}$ & 28.4 & $\boldsymbol{689.3}$ & 63.5 & $\boldsymbol{1382.9}$ & 156.3 & $\boldsymbol{2802.2}$ & 370.9 & $\boldsymbol{5687.1}$ & 792.2 \\
        & 0.3 & $\boldsymbol{0.3}$ & 0.1 & $\boldsymbol{0.6}$ & 0.1 & $\boldsymbol{1.2}$ & 0.1 & $\boldsymbol{2.4}$ & 0.2 & $\boldsymbol{4.8}$ & 0.3 & $\boldsymbol{9.9}$ & 0.6 & $\boldsymbol{20.1}$ & 1.2 & $\boldsymbol{40.7}$ & 2.7 & $\boldsymbol{82.0}$ & 6.6 & $\boldsymbol{166.3}$ & 16.2 & $\boldsymbol{335.1}$ & 35.3 & $\boldsymbol{688.0}$ & 78.0 & $\boldsymbol{1406.6}$ & 183.0 & $\boldsymbol{2823.4}$ & 463.5 & $\boldsymbol{5725.9}$ & 982.8 \\
        & 0.4 & $\boldsymbol{0.3}$ & 0.1 & $\boldsymbol{0.6}$ & 0.1 & $\boldsymbol{1.2}$ & 0.2 & $\boldsymbol{2.4}$ & 0.2 & $\boldsymbol{4.9}$ & 0.4 & $\boldsymbol{9.9}$ & 0.6 & $\boldsymbol{20.1}$ & 1.3 & $\boldsymbol{40.8}$ & 3.0 & $\boldsymbol{82.5}$ & 7.2 & $\boldsymbol{164.8}$ & 18.3 & $\boldsymbol{336.4}$ & 39.5 & $\boldsymbol{683.2}$ & 85.3 & $\boldsymbol{1400.9}$ & 202.8 & $\boldsymbol{2789.1}$ & 522.5 & $\boldsymbol{5723.8}$ & 1105.4 \\
        & 0.5 & $\boldsymbol{0.3}$ & 0.1 & $\boldsymbol{0.6}$ & 0.1 & $\boldsymbol{1.2}$ & 0.2 & $\boldsymbol{2.4}$ & 0.2 & $\boldsymbol{4.9}$ & 0.4 & $\boldsymbol{9.9}$ & 0.7 & $\boldsymbol{20.1}$ & 1.4 & $\boldsymbol{40.6}$ & 3.1 & $\boldsymbol{82.6}$ & 7.6 & $\boldsymbol{166.7}$ & 19.1 & $\boldsymbol{336.3}$ & 41.2 & $\boldsymbol{686.3}$ & 90.7 & $\boldsymbol{1397.4}$ & 206.8 & $\boldsymbol{2825.7}$ & 541.9 & $\boldsymbol{5733.1}$ & 1165.2 \\
        & 0.6 & $\boldsymbol{0.3}$ & 0.1 & $\boldsymbol{0.6}$ & 0.1 & $\boldsymbol{1.2}$ & 0.2 & $\boldsymbol{2.4}$ & 0.2 & $\boldsymbol{4.9}$ & 0.4 & $\boldsymbol{9.9}$ & 0.7 & $\boldsymbol{20.0}$ & 1.4 & $\boldsymbol{40.5}$ & 3.1 & $\boldsymbol{82.2}$ & 7.4 & $\boldsymbol{166.2}$ & 18.7 & $\boldsymbol{338.0}$ & 41.2 & $\boldsymbol{683.4}$ & 89.0 & $\boldsymbol{1412.1}$ & 211.2 & $\boldsymbol{2812.2}$ & 533.9 & $\boldsymbol{5673.4}$ & 1149.3 \\
        & 0.7 & $\boldsymbol{0.3}$ & 0.1 & $\boldsymbol{0.6}$ & 0.1 & $\boldsymbol{1.2}$ & 0.2 & $\boldsymbol{2.4}$ & 0.2 & $\boldsymbol{4.8}$ & 0.4 & $\boldsymbol{10.0}$ & 0.6 & $\boldsymbol{20.0}$ & 1.3 & $\boldsymbol{40.4}$ & 3.0 & $\boldsymbol{82.1}$ & 7.1 & $\boldsymbol{166.6}$ & 17.9 & $\boldsymbol{334.7}$ & 39.5 & $\boldsymbol{686.8}$ & 87.2 & $\boldsymbol{1407.4}$ & 202.8 & $\boldsymbol{2840.0}$ & 499.8 & $\boldsymbol{5650.6}$ & 1080.4 \\
        & 0.8 & $\boldsymbol{0.3}$ & 0.1 & $\boldsymbol{0.6}$ & 0.1 & $\boldsymbol{1.2}$ & 0.1 & $\boldsymbol{2.4}$ & 0.2 & $\boldsymbol{4.9}$ & 0.4 & $\boldsymbol{9.9}$ & 0.6 & $\boldsymbol{20.1}$ & 1.3 & $\boldsymbol{40.3}$ & 2.9 & $\boldsymbol{82.0}$ & 6.9 & $\boldsymbol{166.5}$ & 17.1 & $\boldsymbol{336.9}$ & 36.9 & $\boldsymbol{692.2}$ & 81.7 & $\boldsymbol{1408.4}$ & 191.1 & $\boldsymbol{2784.5}$ & 463.6 & $\boldsymbol{5712.9}$ & 1009.8 \\
        & 0.9 & $\boldsymbol{0.3}$ & 0.1 & $\boldsymbol{0.6}$ & 0.1 & $\boldsymbol{1.2}$ & 0.1 & $\boldsymbol{2.4}$ & 0.2 & $\boldsymbol{4.8}$ & 0.3 & $\boldsymbol{9.8}$ & 0.6 & $\boldsymbol{20.0}$ & 1.2 & $\boldsymbol{40.7}$ & 2.8 & $\boldsymbol{82.6}$ & 6.6 & $\boldsymbol{166.8}$ & 16.3 & $\boldsymbol{335.5}$ & 35.2 & $\boldsymbol{679.1}$ & 78.1 & $\boldsymbol{1403.4}$ & 180.5 & $\boldsymbol{2814.8}$ & 430.5 & $\boldsymbol{5831.0}$ & 937.5 \\
        \hline

        \multirow{9}{*}{AH} & 0.1 & $\boldsymbol{0.6}$   & 0.3   & $\boldsymbol{1.3}$   & 0.7   & $\boldsymbol{2.5}$   & 1.6   & $\boldsymbol{5.3}$   & 3.8   & $\boldsymbol{10.7}$   & 8.5   & $\boldsymbol{21.5}$   & 18.9   & $\boldsymbol{42.9}$   & 40.8   & $\boldsymbol{86.5}$   & 90.0   & $\boldsymbol{171.1}$   & 217.2  & $\boldsymbol{354.5}$ & 483.2 & NA & NA & NA & NA & NA & NA & NA & NA & NA & NA \\
        & 0.2 & $\boldsymbol{0.6}$   & 0.4   & $\boldsymbol{1.3}$   & 0.8   & $\boldsymbol{2.6}$   & 2.1   & $\boldsymbol{5.4}$   & 4.9   & $\boldsymbol{10.3}$   & 10.9   & $\boldsymbol{21.4}$   & 23.0   & $\boldsymbol{43.3}$   & 52.7   & $\boldsymbol{86.6}$   & 119.1   & $\boldsymbol{173.0}$   & 291.8 & $\boldsymbol{353.4}$ & 658.1 & NA & NA & NA & NA & NA & NA & NA & NA & NA & NA \\
        & 0.3 & $\boldsymbol{0.7}$  & 0.5   & $\boldsymbol{1.3}$   & 1.0   & $\boldsymbol{2.5}$   & 2.4   & $\boldsymbol{5.1}$   & 5.1   & $\boldsymbol{10.5}$   & 11.6   & $\boldsymbol{21.2}$   & 25.8   & $\boldsymbol{42.8}$   & 60.9   & $\boldsymbol{88.6}$  & 138.0   & $\boldsymbol{172.6}$   & 337.7  & $\boldsymbol{356.0}$ & 748.1 & NA & NA & NA & NA & NA & NA & NA & NA & NA & NA \\
        & 0.4 & $\boldsymbol{0.6}$   & 0.5   & $\boldsymbol{1.3}$   & 1.0   & $\boldsymbol{2.6}$   & 2.4   & $\boldsymbol{5.5}$   & 5.9   & $\boldsymbol{10.6}$   & 13.2   & $\boldsymbol{21.7}$   & 28.5   & $\boldsymbol{43.1}$   & 65.3   & $\boldsymbol{87.6}$   & 146.8   & $\boldsymbol{172.8}$   & 357.3 & $\boldsymbol{350.8}$ & 788.3 & NA & NA & NA & NA & NA & NA & NA & NA & NA & NA \\
        & 0.5 & $\boldsymbol{0.6}$   & 0.5   & $\boldsymbol{1.3}$   & 1.0   & $\boldsymbol{2.6}$   & 2.6   & $\boldsymbol{5.2}$  & 5.6   & $\boldsymbol{10.2}$   & 12.5   & $\boldsymbol{21.5}$   & 28.3   & $\boldsymbol{43.3}$   & 65.5   & $\boldsymbol{86.9}$   & 141.0   & $\boldsymbol{172.7}$   & 361.8 & $\boldsymbol{350.3}$ & 814.0 & NA & NA & NA & NA & NA & NA & NA & NA & NA & NA \\
        & 0.6 & $\boldsymbol{0.6}$   & 0.5   & $\boldsymbol{1.2}$   & 1.0   & $\boldsymbol{2.6}$   & 2.5   & $\boldsymbol{5.3}$   & 5.4   & $\boldsymbol{10.5}$   & 11.6   & $\boldsymbol{21.0}$   & 26.9   & $\boldsymbol{43.8}$   & 62.9   & $\boldsymbol{85.5}$   & 113.2   & $\boldsymbol{178.7}$   & 356.8 & $\boldsymbol{350.4}$ & 767.2 & NA & NA & NA & NA & NA & NA & NA & NA & NA & NA \\
        & 0.7 & $\boldsymbol{0.6}$   & 0.4   & $\boldsymbol{1.3}$  & 0.9   & $\boldsymbol{2.6}$   & 2.0   & $\boldsymbol{5.2}$   & 5.2   & $\boldsymbol{10.8}$   & 12.0   & $\boldsymbol{21.5}$   & 25.7   & $\boldsymbol{42.9}$   & 59.1   & $\boldsymbol{75.5}$   & 114.9   & $\boldsymbol{175.3}$   & 332.2 & $\boldsymbol{351.6}$ & 730.6 & NA & NA & NA & NA & NA & NA & NA & NA & NA & NA \\
        & 0.8 & $\boldsymbol{0.7}$   & 0.4   & $\boldsymbol{1.2}$   & 0.8   & $\boldsymbol{2.5}$   & 2.2   & $\boldsymbol{5.4}$   & 5.0   & $\boldsymbol{10.1}$   & 10.8   & $\boldsymbol{21.6}$   & 25.5   & $\boldsymbol{43.2}$   & 55.5   & $\boldsymbol{83.1}$   & 121.7   & $\boldsymbol{172.5}$  & 296.1  & $\boldsymbol{355.7}$ & 686.8 & NA & NA & NA & NA & NA & NA & NA & NA & NA & NA \\
        & 0.9 & $\boldsymbol{0.6}$   & 0.4   & $\boldsymbol{1.1}$  & 0.9   & $\boldsymbol{2.6}$   & 2.1   & $\boldsymbol{4.9}$   & 4.7   & $\boldsymbol{10.6}$   & 10.9   & $\boldsymbol{21.0}$   & 23.7   & $\boldsymbol{42.7}$   & 53.7   & $\boldsymbol{85.4}$   & 114.1   & $\boldsymbol{172.9}$   & 278.3 & $\boldsymbol{350.9}$ & 570.4 & NA & NA & NA & NA & NA & NA & NA & NA & NA & NA \\
        \hline
        \multirow{9}{*}{MAH} & 0.1 & $\boldsymbol{0.6}$  & 0.0   & $\boldsymbol{1.3}$  & 0.0   & $\boldsymbol{2.5}$  & 0.1   & $\boldsymbol{5.1}$ & 0.1   & $\boldsymbol{10.2}$  & 0.3   & $\boldsymbol{20.6}$  & 0.6   & $\boldsymbol{41.7}$  & 1.2   & $\boldsymbol{84.1}$  & 2.6   & $\boldsymbol{168.4}$  & 5.8 & $\boldsymbol{341.7}$  & 12.2   & $\boldsymbol{684.7}$  & 27.3   & $\boldsymbol{1409.3}$  & 58.4   & $\boldsymbol{2648.4}$  & 126.6   & $\boldsymbol{5243.0}$  & 317.3   & $\boldsymbol{12649.3}$  & 778.0 \\
        & 0.2 & $\boldsymbol{0.6}$  & 0.0   & $\boldsymbol{1.2}$  & 0.0   & $\boldsymbol{2.5}$  & 0.1   & $\boldsymbol{5.1}$  & 0.1   & $\boldsymbol{10.3}$  & 0.3   & $\boldsymbol{20.7}$  & 0.6   & $\boldsymbol{41.8}$  & 1.2   & $\boldsymbol{84.5}$  & 3.1   & $\boldsymbol{168.1}$  & 6.7  & $\boldsymbol{341.0}$  & 14.8   & $\boldsymbol{688.3}$  & 32.5   & $\boldsymbol{1411.9}$  & 71.2   & $\boldsymbol{2651.7}$  & 153.9   & $\boldsymbol{5264.2}$  & 386.7   & $\boldsymbol{12561.3}$  & 903.0 \\
        & 0.3 & $\boldsymbol{0.6}$ & 0.0   & $\boldsymbol{1.2}$  & 0.0   & $\boldsymbol{2.5}$  & 0.1   & $\boldsymbol{5.1}$  & 0.1   & $\boldsymbol{10.2}$  & 0.3   & $\boldsymbol{20.5}$  & 0.6   & $\boldsymbol{41.8}$  & 1.4   & $\boldsymbol{84.3}$  & 3.4   & $\boldsymbol{168.1}$  & 7.4 & $\boldsymbol{340.8}$  & 16.3   & $\boldsymbol{686.4}$  & 35.5   & $\boldsymbol{1418.6}$  & 77.7   & $\boldsymbol{2649.1}$  & 169.2   & $\boldsymbol{5225.6}$  & 421.6   & $\boldsymbol{12571.4}$  & 993.3  \\
        & 0.4 & $\boldsymbol{0.6}$  & 0.0   & $\boldsymbol{1.3}$  & 0.0   & $\boldsymbol{2.5}$  & 0.1   & $\boldsymbol{5.1}$  & 0.1   & $\boldsymbol{10.2}$  & 0.3   & $\boldsymbol{21.0}$  & 0.6   & $\boldsymbol{41.7}$  & 1.4   & $\boldsymbol{84.1}$  & 3.5   & $\boldsymbol{167.9}$  & 7.8 & $\boldsymbol{340.8}$  & 17.1   & $\boldsymbol{685.1}$  & 36.7   & $\boldsymbol{1418.5}$  & 80.8   & $\boldsymbol{2656.6}$  & 178.2   & $\boldsymbol{5339.2}$  & 438.6   & $\boldsymbol{12494.7}$  & 1047.8  \\
        & 0.5 & $\boldsymbol{0.6}$  & 0.0   & $\boldsymbol{1.3}$  & 0.0   & $\boldsymbol{2.5}$  & 0.1   & $\boldsymbol{5.1}$  & 0.1   & $\boldsymbol{10.2}$  & 0.3   & $\boldsymbol{20.7}$  & 0.6   & $\boldsymbol{41.6}$  & 1.4   & $\boldsymbol{84.3}$  & 3.5   & $\boldsymbol{168.2}$  & 7.8  & $\boldsymbol{340.9}$  & 17.4   & $\boldsymbol{686.8}$  & 36.7   & $\boldsymbol{1417.5}$  & 80.8   & $\boldsymbol{2636.6}$  & 181.1   & $\boldsymbol{5260.8}$  & 438.5   & $\boldsymbol{12527.6}$  & 1047.6 \\
        & 0.6 & $\boldsymbol{0.6}$  & 0.0   & $\boldsymbol{1.3}$  & 0.0   & $\boldsymbol{2.5}$  & 0.1   & $\boldsymbol{5.1}$  & 0.1   & $\boldsymbol{10.2}$  & 0.3   & $\boldsymbol{20.5}$  & 0.6   & $\boldsymbol{42.0}$  & 1.4   & $\boldsymbol{84.5}$  & 3.4   & $\boldsymbol{168.0}$  & 7.6 & $\boldsymbol{342.5}$  & 16.9   & $\boldsymbol{687.4}$  & 35.5   & $\boldsymbol{1415.7}$  & 78.5   & $\boldsymbol{2643.4}$  & 177.9   & $\boldsymbol{5272.8}$  & 423.4   & $\boldsymbol{12547.9}$  & 1017.7  \\
        & 0.7 & $\boldsymbol{0.6}$  & 0.0   & $\boldsymbol{1.3}$  & 0.0   & $\boldsymbol{2.5}$  & 0.1   & $\boldsymbol{5.1}$  & 0.1   & $\boldsymbol{10.2}$  & 0.3   & $\boldsymbol{20.6}$  & 0.6   & $\boldsymbol{41.7}$  & 1.3   & $\boldsymbol{84.5}$  & 3.3   & $\boldsymbol{168.3}$  & 7.3 & $\boldsymbol{342.9}$  & 16.3   & $\boldsymbol{686.3}$  & 34.2   & $\boldsymbol{1418.4}$  & 75.7   & $\boldsymbol{2647.9}$  & 170.4   & $\boldsymbol{5262.0}$  & 408.2   & $\boldsymbol{12553.3}$  & 982.0  \\
        & 0.8 & $\boldsymbol{0.6}$  & 0.0   & $\boldsymbol{1.3}$  & 0.0   & $\boldsymbol{2.5}$  & 0.1   & $\boldsymbol{5.1}$  & 0.1   & $\boldsymbol{10.2}$  & 0.3   & $\boldsymbol{20.7}$  & 0.6   & $\boldsymbol{41.8}$  & 1.3   & $\boldsymbol{85.4}$  & 3.2   & $\boldsymbol{167.9}$  & 6.9 & $\boldsymbol{342.6}$  & 15.4   & $\boldsymbol{686.3}$  & 32.3   & $\boldsymbol{1414.9}$  & 71.7   & $\boldsymbol{2647.9}$  & 163.8   & $\boldsymbol{5258.7}$  & 385.0   & $\boldsymbol{12681.9}$  & 929.6  \\
        & 0.9 & $\boldsymbol{0.6}$  & 0.0   & $\boldsymbol{1.3}$  & 0.0   & $\boldsymbol{2.5}$  & 0.1   & $\boldsymbol{5.1}$  & 0.1   & $\boldsymbol{10.2}$  & 0.3   & $\boldsymbol{20.7}$  & 0.6   & $\boldsymbol{41.5}$  & 1.2   & $\boldsymbol{84.3}$  & 3.1   & $\boldsymbol{168.5}$  & 6.6 & $\boldsymbol{343.5}$  & 14.6   & $\boldsymbol{685.3}$  & 31.0   & $\boldsymbol{1419.6}$  & 67.6   & $\boldsymbol{2643.7}$  & 157.1   & $\boldsymbol{5258.4}$  & 369.5   & $\boldsymbol{12516.2}$  & 877.3  \\
        \hline
    \end{tabular}
    \begin{tablenotes}
    \footnotesize
    \vspace{-0.3cm}
    \item \textit{Note.} Bold: coefficient generation time. Regular: computation time.  NA: results not included. CPU: our algorithm. AH: alternating hyperplane algorithm. MAH: modified alternating hyperplane algorithm. 
    \end{tablenotes}
    \end{threeparttable}
    \end{adjustbox}
    \label{tab_greenville_homo_detail}
    \vspace{-0.3cm}
    \end{minipage}
    }
\end{table}

\end{appendices}

\end{document}